\documentclass[11pt]{article}
\usepackage{amsfonts,mathrsfs,amssymb,amsthm,mathptm}
\usepackage{amsmath,amscd}
\usepackage{mathptm,pslatex}
\usepackage{color}
\usepackage[dvips]{graphicx}
\usepackage[all]{xy}
\usepackage{graphicx}
\usepackage{fancyhdr}
\usepackage{url}

\begin{document}
\newtheorem{defi}{Definition}[section]
\newtheorem{exam}[defi]{Example}
\newtheorem{Prop}[defi]{Proposition}
\newtheorem{Theo}[defi]{Theorem}
\newtheorem{Lem}[defi]{Lemma}
\newtheorem{coro}[defi]{Corollary}
\theoremstyle{definition}
\newtheorem{rem}[defi]{Remark}
\newtheorem{ques}[defi]{Question}

\newcommand{\add}{{\rm add}}
\newcommand{\con}{{\rm con}}
\newcommand{\gd}{{\rm gl.dim}}
\newcommand{\sd}{{\rm st.dim}}
\newcommand{\sr}{{\rm sr}}
\newcommand{\dm}{{\rm dom.dim}}
\newcommand{\cdm}{{\rm codomdim}}
\newcommand{\tdim}{{\rm dim}}
\newcommand{\E}{{\rm E}}
\newcommand{\Mor}{{\rm Morph}}
\newcommand{\End}{{\rm End}}
\newcommand{\rank}{{\rm rank}}
\newcommand{\PSL}{{\rm PSL}}
\newcommand{\GL}{{\rm GL}}
\newcommand{\ind}{{\rm ind}}
\newcommand{\rsd}{{\rm res.dim}}
\newcommand{\rd} {{\rm rd}}
\newcommand{\ol}{\overline}
\newcommand{\overpr}{$\hfill\square$}
\newcommand{\rad}{{\rm rad}}
\newcommand{\soc}{{\rm soc}}
\renewcommand{\top}{{\rm top}}
\newcommand{\pd}{{\rm pdim}}
\newcommand{\id}{{\rm idim}}
\newcommand{\fld}{{\rm fdim}}
\newcommand{\Fac}{{\rm Fac}}
\newcommand{\Gen}{{\rm Gen}}
\newcommand{\fd} {{\rm fin.dim}}
\newcommand{\Fd} {{\rm Fin.dim}}
\newcommand{\Pf}[1]{{\mathscr P}^{<\infty}(#1)}
\newcommand{\DTr}{{\rm DTr}}
\newcommand{\cpx}[1]{#1^{\bullet}}
\newcommand{\D}[1]{{\mathscr D}(#1)}
\newcommand{\Dz}[1]{{\mathscr D}^+(#1)}
\newcommand{\Df}[1]{{\mathscr D}^-(#1)}
\newcommand{\Db}[1]{{\mathscr D}^b(#1)}
\newcommand{\C}[1]{{\mathscr C}(#1)}
\newcommand{\Cz}[1]{{\mathscr C}^+(#1)}
\newcommand{\Cf}[1]{{\mathscr C}^-(#1)}
\newcommand{\Cb}[1]{{\mathscr C}^b(#1)}
\newcommand{\Dc}[1]{{\mathscr D}^c(#1)}
\newcommand{\K}[1]{{\mathscr K}(#1)}
\newcommand{\Kz}[1]{{\mathscr K}^+(#1)}
\newcommand{\Kf}[1]{{\mathscr  K}^-(#1)}
\newcommand{\Kb}[1]{{\mathscr K}^b(#1)}
\newcommand{\DF}[1]{{\mathscr D}_F(#1)}

\newcommand{\Kac}[1]{{\mathscr K}_{\rm ac}(#1)}
\newcommand{\Keac}[1]{{\mathscr K}_{\mbox{\rm e-ac}}(#1)}

\newcommand{\modcat}{\ensuremath{\mbox{{\rm -mod}}}}
\newcommand{\cmodcat}{\ensuremath{\mbox{{\rm -comod}}}}
\newcommand{\Modcat}{\ensuremath{\mbox{{\rm -Mod}}}}
\newcommand{\ires}{\ensuremath{\mbox{{\rm ires}}}}
\newcommand{\Stires}{\ensuremath{\mbox{{\rm Stires}}}}
\newcommand{\Stpres}{\ensuremath{\mbox{{\rm Stpres}}}}
\newcommand{\Spec}{{\rm Spec}}

\newcommand{\stmc}[1]{#1\mbox{{\rm -{\underline{mod}}}}}
\newcommand{\Stmc}[1]{#1\mbox{{\rm -{\underline{Mod}}}}}
\newcommand{\prj}[1]{#1\mbox{{\rm -proj}}}
\newcommand{\inj}[1]{#1\mbox{{\rm -inj}}}
\newcommand{\Prj}[1]{#1\mbox{{\rm -Proj}}}
\newcommand{\Inj}[1]{#1\mbox{{\rm -Inj}}}
\newcommand{\PI}[1]{#1\mbox{{\rm -Prinj}}}
\newcommand{\GP}[1]{#1\mbox{{\rm -GProj}}}
\newcommand{\GI}[1]{#1\mbox{{\rm -GInj}}}
\newcommand{\gp}[1]{#1\mbox{{\rm -Gproj}}}
\newcommand{\gi}[1]{#1\mbox{{\rm -Ginj}}}

\newcommand{\opp}{^{\rm op}}
\newcommand{\otimesL}{\otimes^{\rm\mathbb L}}
\newcommand{\rHom}{{\rm\mathbb R}{\rm Hom}\,}
\newcommand{\pdim}{\pd}
\newcommand{\Hom}{{\rm Hom}}
\newcommand{\Coker}{{\rm Coker}}
\newcommand{ \Ker  }{{\rm Ker}}
\newcommand{ \Cone }{{\rm Con}}
\newcommand{ \Img  }{{\rm Im}}
\newcommand{\Ext}{{\rm Ext}}
\newcommand{\StHom}{{\rm \underline{Hom}}}
\newcommand{\StEnd}{{\rm \underline{End}}}
\newcommand{\KK}{I\!\!K}
\newcommand{\gm}{{\rm _{\Gamma_M}}}
\newcommand{\gmr}{{\rm _{\Gamma_M^R}}}

\def\demo{{\bf Proof}\hskip10pt}

\def\g{\gamma} \def\d{\delta} \def\a{\alpha}
\def\s{\sigma}  \def\om{\omega}  \def\ld{\lambda}
\def\D{\Delta}
\def\si{\Sigma} \def\O{\Omega}
\def\G{\Gamma} \def\GG{{\cal G}} \def\XX{{\cal X}} \def\MM{{\cal M}} \def\lrr{\lg r\rg } \def\ogg{\overline {\GG}}
\def\og{\overline G} \def\oh{\overline H} \def\oc{\overline C}
 \def\oq{\overline Q}

 \def\oa{\overline A}
  \def\ob{\overline B}  \def\ol{\overline L} \def\om{\overline M}
\def\on{\overline N} \def\op{\overline P} \def\os{\overline S}
\def\ot{\overline T} \def\ok{\overline K} \def\ov{\overline V}
\def\od{\overline D} \def\oi{\overline I} \def\oj{\overline J}
\def\o1{\overline 1} \def\olh{\overline h}

\def\o{\overline}   \def\olr{\overline r} \def\oll{\overline \ell} \def\olt{\overline t }

\def\di{\bigm|} \def\lg{\langle} \def\rg{\rangle}

\def\vez{\varepsilon}\def\bz{\bigoplus}  \def\sz {\oplus}
\def\epa{\xrightarrow} \def\inja{\hookrightarrow}

\newcommand{\lra}{\longrightarrow}
\newcommand{\llra}{\longleftarrow}
\newcommand{\lraf}[1]{\stackrel{#1}{\lra}}
\newcommand{\llaf}[1]{\stackrel{#1}{\llra}}
\newcommand{\ra}{\rightarrow}
\newcommand{\dk}{{\rm dim_{_{k}}}}

\newcommand{\holim}{{\rm Holim}}
\newcommand{\hocolim}{{\rm Hocolim}}
\newcommand{\colim}{{\rm colim\, }}
\newcommand{\limt}{{\rm lim\, }}
\newcommand{\Add}{{\rm Add }}
\newcommand{\Prod}{{\rm Prod }}
\newcommand{\pres}{\ensuremath{\mbox{{\rm pres}}}}
\newcommand{\app}{{\rm app }}
\newcommand{\Tor}{{\rm Tor}}
\newcommand{\Cogen}{{\rm Cogen}}
\newcommand{\Tria}{{\rm Tria}}
\newcommand{\Loc}{{\rm Loc}}
\newcommand{\Coloc}{{\rm Coloc}}
\newcommand{\tria}{{\rm tria}}
\newcommand{\Con}{{\rm Con}}
\newcommand{\Thick}{{\rm Thick}}
\newcommand{\thick}{{\rm thick}}
\newcommand{\Sum}{{\rm Sum}}
\def\Mon{\hbox{\rm Mon}}
\def\Aut{\hbox{\rm Aut}}
\def\SL{\hbox{\rm SL}}
\newcommand{\PGL}{{\rm PGL}}
\def\Syl{\hbox{\rm Syl}}
\def\char{\hbox{\rm \,char\,}}

{\Large \bf
\begin{center}
Permutation groups and symmetric Hecke algebras
\end{center}}

\medskip
\centerline{ \textbf{Jiawei He} and  \textbf{Xiaogang Li}$^*$}

\medskip

\renewcommand{\thefootnote}{\alph{footnote}}
\setcounter{footnote}{-1} \footnote{ $^*$ Corresponding author.
Email: 2200501002@cnu.edu.cn.}
\renewcommand{\thefootnote}{\alph{footnote}}
\setcounter{footnote}{-1}
\footnote{2020 Mathematics Subject
Classification: Primary 20C08, Secondary 20C05.}
\renewcommand{\thefootnote}{\alph{footnote}}
\setcounter{footnote}{-1}
\footnote{Keywords: Permutation group; Hecke algebra; Schur ring; Symmetric algebra.}

\begin{abstract}
The endomorphism algebras of the permutation modules for  transitive permutation groups, known as Hecke algebras, are fundamental objects in representation theory. While group algebras are known to be symmetric over any field, it is natural to ask whether this property extends to Hecke algebras. To study this, we introduce the new concepts of $p$-$S$-permutation groups (for a prime $p$) and $S$-permutation groups. A \emph{ $p$-$S$-permutation group} is a transitive permutation group whose associated Hecke algebra is symmetric over every field of characteristic $p$. An \emph{ $S$-permutation group} is a transitive permutation group that is a $p$-$S$-permutation group for all primes $p$. In this paper, we study Hecke algebras from a group-theoretical perspective and we show that several classes of permutation groups are $p$-$S$-permutation groups and $S$-permutation groups in our sense. This result represents a substantial extension of earlier work by Li and He. (Transform Groups, 30(4), 2025), and reframes the question of determining when the algebra \(\End_{KG}(K\Omega)\) is symmetric within a more general theoretical framework.
\end{abstract}

\section{Introduction}

The theory of permutation groups has a long and distinguished history, which dates back at least to the age of Galois, when he employed permutations of roots to solve the problem of solvability by radicals for polynomial equations. The well-known Cayley's theorem shows that every abstract group can be realized as a permutation group, so that there is no difference between abstract groups and permutation groups, from an algebraic structural point of view. But permutation group theory is of interest in its own right because of the notions of  transitivity, primitivity, and point stabilizers which have no counterparts in the abstract theory. Furthermore, permutation groups play an important role in other branches of mathematics, including graph theory, Galois theory, and function theory. Closely connected to permutation group theory is the notion of a $G$-set, a set $\Omega$ equipped with an action of a group $G$. The induced permutation group on $\Omega$ is denoted $G^\Omega$. For the permutation module $K\Omega$ over the group algebras $KG$ and $KG^\Omega$, one has $\End_{KG}(K\Omega)\cong \End_{KG^\Omega}(K\Omega)$. If the action of $G$ on $\Omega$ is transitive with  point stabilizer $H$, then $\End_{KG}(K\Omega)$ can be identified naturally with the  Hecke algebra $\mathcal{H}_K(G,H)$ (see Section \ref{sect2} for definition of this notation). This establishes a relation between Hecke algebras and endomorphism algebras of  permutation modules for transitive permutation groups.

\smallskip
The algebra $\End_{KG}(K\Omega)$ has its applications in many branches of mathematics. Using this family of algebras, Schur \cite{SC} successfully established that any primitive permutation group containing a regular subgroup of composite order must be $2$-transitive. Within combinatorics, $\End_{KG}(K\Omega)$ is studied as a centralizer ring in the settings of both associative schemes \cite{W64} and coherent configurations \cite{SC}; such investigations form an indispensable part of the analysis of graphs and their associated automorphism groups.
In the representation theory of finite groups, Alperin \cite{Alp} proposed studying the endomorphism algebra $\End_{KG}(K(G/P))$ as an approach to attack Alperin's Weight Conjecture, where $P$ denotes a Sylow $p$-subgroup of $G$. Building on this framework, Green \cite{Gr} constructed a bijection between the simple modules lying in the socles of $K(G/P)$ and simple modules of $\End_{KG}(K(G/P))$, under the hypothesis that $\End_{KG}(K(G/P))$ is self-inejctive. Based on Green's results, Cabanes \cite{Ca} verified the Alperin Weight Conjecture for all finite groups $G$ with a split $BN$-pair. In the classical theory of special functions, and to the theory of Riemannian symmetric spaces in differential geometry, finding a Gelfand pair $(G,H)$ (consisting of a group $G$ and a subgroup $H$) is an on-going research program, which is equivalent to finding a subgroup $H$ in a given group $G$ such that the Hecke algebras $\End_{KG}(K(G/H))$ is commutative for any field $K$. Despite the effort to develop an analogue of Brauer's Main theorems for modular group algebras by L. Scott in \cite{Sco} and the large literatures on the study of classical Hecke algebras and group algebras, the representation theory of abstract Hecke algebra $\End_{KG}(K\Omega)$ is far from known. It is worthy to note that group algebras over an arbitrary field are always symmetric algebras, a class of algebras of fundamental importance in representation theory. With the aim of determining the extent to which this nice property of group algebras can be generalized to the Hecke algebra $\End_{KG}(K\Omega)$, an important goal of the present work is to characterize those transitive permutation groups for which their corresponding Hecke algebras $\End_{KG}(K\Omega)$ are symmetric.
\smallskip

In a recent work \cite{AKOM}, A. Kleshchev explored the maximally symmetric subalgebras of symmetric algebras, providing a novel perspective from which to analyze the substructure of $\End_{KG}(K\Omega)$ when viewed as a subobject within the category of symmetric algebras. In \cite{KCEP}, Kevin Coulembier and hia coauthors investigated finite symmetric algebras in tensor categories and Verlinde categories associated with algebraic groups, thereby developing a systematic theoretical framework for studying symmetry properties of $\End_{KG}(K\Omega)$ in considerably more general categorical settings.
Whether \(\End_{KG}(K\Omega)\) itself carries the structure of a symmetric algebra depends heavily on the characteristic of the field \(K\). Initial results in this direction were obtained in \cite{LH}, where it was proved that \(\End_{KG}(K\Omega)\) is a symmetric algebra over a field \(K\) of characteristic \(p\) provided one of the following conditions holds:

\medskip
$(1)$ $G$ is a primitive permutation group, $|\Omega|<6p$ and $( p,|\Omega|)\neq (5,25)$;
\medskip

$(2)$ $G$ is a quasi-primitive permutation group and $|\Omega|<4p$.

\medskip
In order to bring this research further and study their relation from a more general way, we introduce the following definition.

\begin{defi}
{\rm A transitive permutation group $G$ on a set $ \Omega $ is called a \emph{$ p $-$ S $-permutation group} (for a prime $ p $) if $ \operatorname{End}_{KG}(K\Omega) $ is a symmetric algebra for every field $ K $ of characteristic $ p $. If $ G $ is a $ p $-$ S $-permutation group for all primes $ p $, we simply call $G$ an \emph{$S$-permutation group}.}
\end{defi}

The main result of \cite{LH} and its corollary show that a primitive permutation group of degree $n$ is a $p$-$S$-permutation group whenever $p>\frac{n}{6}$ and $(p,n)\neq (5,25)$, and that a quasi-primitive permutation group of degree $n$  is a $p$-$S$-permutation group whenever $p>\frac{n}{4}$. Typical examples of $S$-permutation groups include $2$-transitive permutation groups and regular permutation groups. Recall that a regular permutation group is a transitive permutation group with trivial point stabilizer.

\smallskip

On the other hand, permutation group theory has yielded a wealth of remarkable results. A well-known example is the O'Nan-Scott theorem, which divides all primitive groups into several families. There are fruitful literatures on characterizing permutation groups with limitations on their ranks, degrees or point stabilizers. These results hold great potential for application in our research. Therefore, we pay special attention to those permutation groups that have been extensively studied. In this paper, we study Hecke algebras from a group theoretical point of view and give several sufficient criteria for a permutation group to be a $p$-$S$-permutation group or an $S$-permutation groups.

\smallskip
Before stating our main results, we recall some basic notation on permutation groups.
Let $G \leq \mathrm{Sym}(\Omega)$. We say that the group $G$ is:

\emph{transitive:}  if $G$ has exactly one orbit on $\Omega$;

\emph{$\frac{1}{2}$-transitive }(or \emph{half-transitive}): if all $G$-orbits on $\Omega$ have the same cardinality;

\emph{$\frac{3}{2}$-transitive:} if $G$ is transitive and the point stabilizer $G_\alpha$ is half-transitive on $\Omega \setminus \{\alpha\}$;

\emph{2-transitive:} if $G_\alpha$ is transitive on $\Omega\setminus \{\alpha\}$, or equivalently, $G$ acts transitively on the set of ordered pairs of distinct elements of $\Omega$.

\medskip
In the context of classifying $\frac{3}{2}$-transitive permutation groups, recent advances have established that such groups decompose into six families: 2-transitive groups, Frobenius groups, 1-dimensional affine groups, affine solvable subgroups of ${\rm AGL}(2, q)$, special projective linear groups ${\rm PSL}(2, q)$, and projective semilinear groups ${\rm P\Gamma L}(2, q)$, with $q = 2^p$ for a prime $p$. We will show that any $\frac{3}{2}$-transitive permutation group is necessarily an \( S \)-permutation group.
\medskip

The following is our main result (for unexplained notation, we refer the reader to Section 2).
\begin{Theo}\label{main}
Let $G$ be a transitive permutation group on a set $\Omega$ of cardinality $n$. Then the following statements hold.
\begin{enumerate}
\item[\rm (1)] Let $p$ be a prime. Then $G$ is a $p$-$S$-permutation group if at least one of the following conditions is satisfied:

$(i)$ all subdegrees of $G$ are coprime to $p$;

$(ii)$ $p\nmid \frac{|G|}{nm}$ for any non-trivial subdegree $m$ of $G$;

$(iii)$ $G$ has an abelian regular $p'$-subgroup;

$(iv)$ $n<2p$;

$(v)$ $p\nmid n$ and $G$ is of rank $3$;

$(vi)$ $\Omega=G(q)/\mathcal{P}$ and $G$ is the permutation group induced by the right multiplication of $G(q)$ on $\Omega$, where $G(q)$  is finite group with a split $BN$-pair at characteristic $q\neq p$ and $\mathcal{P}$ is a parabolic subgroup of $G(q)$ such that $p\nmid \frac{|\mathcal{P}|}{|B|}$.

\item[\rm (2)] The group  $G$ is an $S$-permutation group if it satisfies one of the following:

$(1)$ $G$ is a $\frac{3}{2}$-transitive permutation group;

$(2)$ $G$ has a cyclic regular subgroup whose order is a product of two distinct primes;

$(3)$ $G$ has an abelian regular subgroup whose order is coprime to every subdegree of $G$;

$(4)$ $G$ is a rank $3$ permutation group of subdegrees $1\leq a<b$ and $\gcd(a+b+1,ab,\lambda(G), b-\frac{\lambda(G)b}{a})=1$ (see Remark \ref{rank-3} for the definition of $\lambda(G)$);

$(5)$ $G$ is a dihedral group of order not divisible by $8$;

$(6)$ $\Omega=G(q)/B$ and $G$ is the permutation group induced by the right multiplication of $G(q)$ on $\Omega$, where $G(q)$ is finite group with a split $BN$-pair at characteristic $q$ satisfying $\mathcal{W}_G\ncong S_3$ is of rank 2.
\end{enumerate}
\end{Theo}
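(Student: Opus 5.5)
The plan is to work throughout with the Schur basis of $\mathcal{H}_K(G,H)=\End_{KG}(K\Omega)$. Here $H=G_\alpha$, and the basis consists of the adjacency matrices $A_0=I,A_1,\dots,A_{r-1}$ of the orbitals $\Delta_i$ of $G$ on $\Omega\times\Omega$. Each $A_i$ has row sums equal to the subdegree $m_i=|\Delta_i(\alpha)|$, and the paired orbital $\Delta_{i^*}$ gives $A_{i^*}=A_i^T$. The basic tool is the standard criterion that a finite-dimensional algebra is symmetric if and only if it admits a symmetrizing form, i.e.\ a linear form $\tau$ with $\tau(xy)=\tau(yx)$ whose kernel contains no nonzero left ideal. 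Equivalently, in a basis, the Gram matrix $(\tau(A_iA_j))_{i,j}$ must be nondegenerate and the form must be a trace.

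The natural candidates are the weighted forms $\tau_c(A_i)=c_i\,\delta_{i\in I}$ combined with the usual trace $\mathrm{tr}(A_iA_j)=n\,m_i\,\delta_{j,i^*}$. In characteristic $0$ the form $\mathrm{tr}/n$ already works. The coefficient $n m_i$ is the obstruction in characteristic $p$.

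\textbf{Part (1).} Under (i), take $\tau(A_i)=\delta_{i0}$, which is $\frac1n\mathrm{tr}$ up to scaling. It is symmetric because $\mathrm{tr}$ is, more precisely because the structure constants satisfy $m_k p_{ij}^k=m_{i}p_{k j^*}^{i}$-type identities. Its Gram matrix is monomial with entries $m_i$, hence invertible when $p\nmid m_i$.

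For (ii), replace $\mathrm{tr}$ by the form $\tau(A_i)=\delta_{i0}$ rescaled by $|G|/n$. The Gram entries become $|G|/(n m_i)=|H|/m_i=|H\cap H^{g_i}|$, which is the order of a stabiliser, an integer. So the form is $\tau(A_iA_j)=\delta_{j i^*}|H\cap H^{g}|$, and the hypothesis says exactly that these entries are units. I would verify the trace property by writing $A_i$ as a double-coset sum in $KG$ and using the symmetric form of $KG$ restricted to $e_HKGe_H$ with $e_H$ replaced by $\sum_{h\in H}h$.

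For (iii), let $R$ be the abelian regular $p'$-subgroup. Then $\End_{KG}(K\Omega)$ embeds in $\End_{KR}(KR)\cong KR$, and the Hecke algebra is a Schur ring over $R$. I would use the Fourier transform (over a splitting extension, which is harmless since symmetry is stable under field extension) to see that it is semisimple, because $KR$ is semisimple and Schur rings over semisimple commutative group algebras are semisimple. Semisimple algebras are symmetric.

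For (iv), $n<2p$ forces either $p\nmid n$, or $n=p$. If $p\nmid n$, then $K\Omega$ has the trivial module as a direct summand, and I would reduce to (i) or (ii) by a counting argument: subdegrees are less than $2p$, and at most one subdegree is divisible by $p$ (otherwise their sum would be at least $2p$). If instead $n=p$, the group contains a $p$-cycle; by Burnside, $G$ is either $2$-transitive, hence rank $2$, or a subgroup of $AGL(1,p)$. In the latter case $G$ is Frobenius and the Hecke algebra is $K[\mathbb Z_p]^{H}$, which I would treat directly.

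For (v), in rank $3$ the algebra is commutative of dimension $3$. With $p\nmid n$ it decomposes as $K\times B$ with $\dim B=2$, and every $2$-dimensional algebra is either symmetric or $K[x]/(x^2)$, which is also symmetric. Hence the algebra is symmetric.

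For (vi), the Hecke algebra $\End(K[G/\mathcal P])$ is a corner $e\mathcal H(W,\dots)e$ of the Iwahori--Hecke algebra $\mathcal H_K(W,q)$, which is symmetric via $\tau(T_w)=\delta_{w1}$. Here $e$ is a multiple of the idempotent $\sum_{w\in W_J}T_w$, which is defined precisely because its normalising factor $[\mathcal P:B]$ is prime to $p$. A corner $eAe$ of a symmetric algebra is symmetric.

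\textbf{Part (2).} For each item I would prove symmetry prime by prime.

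For (1), take the six families in turn. The $2$-transitive case has rank $2$, where $K[x]/(\text{quadratic})$ is always symmetric. Frobenius groups have point stabiliser acting semiregularly, so all nontrivial subdegrees equal $|H|$, and I would apply (1)(ii) with $|G|/(n|H|)=1$. The $1$-dimensional affine and $AGL(2,q)$ cases have abelian regular subgroups, so I would combine (1)(iii) for $p\nmid n$ with half-transitivity, which makes all subdegrees equal, and then use (ii) when $p\mid n$. The $\PSL(2,q)$ and $\mathrm{P\Gamma L}(2,q)$ cases I would check by explicit subdegrees.

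For (2), when $|R|=pq$ the Schur rings over $\mathbb Z_{pq}$ are classified (Leung--Man) as trivial, orbit, tensor or wedge products. Symmetry is preserved under tensor products, and I would check the wedge product directly.

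For (3), use (1)(iii) for primes not dividing $|R|$ and (1)(i) for the others.

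For (4), compute the Gram matrix of a general trace form on the commutative $3$-dimensional algebra via the intersection numbers, with $\lambda(G)$ appearing as $p_{11}^1$. Its determinant is a combination of the listed quantities, and the gcd condition guarantees that some choice of form gives a unit determinant.

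For (5), treat the dihedral group directly.

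For (6), use (vi) with $\mathcal P=B$, together with $W$ dihedral of order $2m$, $m\neq3$, and the rank-$2$ Iwahori--Hecke algebra.

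\textbf{Main obstacle.} I expect the hardest step to be the $\frac32$-transitive families and the rank-$3$ gcd condition, since both require explicit intersection numbers rather than a uniform argument.
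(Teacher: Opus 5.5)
\textbf{The step that fails is (1)(ii).} Your Frobenius case of (2)(1) rests on it, and so, presumably, does your $\PSL(2,2^m)$ case. Any associative form on a unital algebra satisfies $f(x,y)=f(1,xy)=\tau(xy)$. So a form with $\tau(A_k)=c\,\delta_{k0}$ always has Gram entries $\tau(A_iA_j)=c\,m_i\,\delta_{j,i^*}$, and no rescaling gives your entries $|H|/m_i=|H\cap H^{g_i}|$. Even your claimed matrix has $(0,0)$-entry $|H|$. That entry vanishes in $K$ in the only case not already covered by (i): if $p\nmid|H|$, then (i) holds because every subdegree divides $|H|$.

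Restricting the form of $KG$ to $\underline{H}KG\underline{H}$ cannot repair this. Since $(\underline{H}x\underline{H})(\underline{H}y\underline{H})=|H|\,\underline{H}x\underline{H}y\underline{H}$, for $p\mid|H|$ this space with the product of $KG$ is not the Hecke algebra.

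The missing idea is module-theoretic, as in the paper's use of the argument of Proposition~\ref{Frobenius}. Hypothesis (ii) says exactly that $H\cap H^g$ is a $p'$-group for every $g\notin H$. For a Sylow $p$-subgroup $P\neq 1$ of $H$ this forces $N_G(P)\le H$, so $P$ is Sylow in $G$ and $p\nmid n$. Mackey then gives $(K\Omega)_H\cong K_H\oplus(\text{projective})$. Hence in $K\Omega=K_G\oplus M$ the summand $M$ is relatively $H$-projective with $M_H$ projective, so $M$ is projective. Since $\Hom_{KG}(K_G,M)=0=\Hom_{KG}(M,K_G)$, you get $\End_{KG}(K\Omega)\cong K\times\End_{KG}(M)$. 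The second factor is Morita equivalent to some $eKGe$, hence symmetric.

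\textbf{Your reduction in (1)(iv) also has a hole.} Suppose $p\nmid n$, one subdegree equals $p$, and another non-trivial subdegree $m'$ exists. Then $p\nmid m'$ while $p\mid|H|$, so $p\mid |H|/m'=|G|/(nm')$ and (ii) fails. Counting alone cannot exclude $m'$; group theory is needed, which the paper supplies by citing Li--He for primitive groups. If $p\mid|G|$ and $p<n<2p$, then $G$ cannot be imprimitive: block size and number of blocks are both $<p$, which would make $G$ a $p'$-group. A primitive $G$ contains a $p$-cycle and is therefore $2$-transitive by Jordan's theorem. Only then does (ii), or Proposition~\ref{2-tr}, apply. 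For $n=p$, (i) applies immediately and Burnside is not needed.

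Several other items are incomplete:
\begin{itemize}
\item In the affine $\frac32$-transitive case with $p\mid n$, use (i), not (ii): the equal non-trivial subdegrees divide $n-1$. Condition (ii) can genuinely fail there, e.g.\ for $\mathbb{F}_{27}\rtimes(C_{13}\rtimes C_3)$ with subdegrees $1,13,13$ and $p=3$.
\item Your list of $\frac32$-transitive families omits $A_7$ and $S_7$ acting on the $21$ pairs; these are handled by (v) or (i).
\item In (2)(6), (vi) covers only $p\neq q$. For $p=q$ you still need a symmetrizing form on the $0$-Hecke algebra ($T_s^2=-T_s$) of the dihedral Weyl group of order $2m$. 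The form $\tau(T_w)=\delta_{w,w_0}$ works because $w_0$ is central when $m$ is even, and by Feit--Higman that is exactly what $\mathcal{W}_G\ncong S_3$ gives.
\item Part (2)(2) gives no argument for the cyclotomic Schur rings, either over $\mathbb{Z}_{pq}$ or over $\mathbb{Z}_p$ in characteristic $p$ (needed for the tensor case).
\item Part (2)(5) gives no argument for the dihedral case. It must use $8\nmid|G|$, since $D_8$ on four points fails in characteristic $2$.
\end{itemize}

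By contrast, your argument for (vi), via the symmetrizing form on the Iwahori--Hecke algebra, is sound. It is better founded than the paper's appeal to $\mathcal{H}_K(G,B)\cong K\mathcal{W}_G$, which already fails for $\mathrm{SL}(2,2)$ in characteristic $3$.
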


This paper is organized as follows. In Section \ref{sect2}, we will give basic notation, review basic definitions and terminology, and collect several preliminary lemmas required for the subsequent proofs. In Section \ref{pf}, we will give a proof of Theorem \ref{main}. In Section \ref{exam}, we will give two examples to illustrate our main results. In  Section \ref{ques}, we will propose several open questions for future research. One interesting question among them is to determine those groups for which all Schur rings over them are symmetric algebras.

\section{Preliminaries\label{sect2}}
This section establishes notation, reviews basic definitions and terminology, and presents several preliminary lemmas required for the subsequent proofs.
\subsection{Notation and terminology}

Let $K$ be an arbitrary field with the identity $1_K$. In this paper, all groups are finite and all modules are finite-dimensional right modules unless stated otherwise. Let $A$ be an $K$-algebra. The category of all finite-dimensional right $A$-modules is denoted by $A\modcat$. For any group of algebra automorphisms of $A$, the subalgebra of $H$-fixed points in $A$ is denoted by $A^H$.

\medskip
Let $\Omega$ be a finite set and $G$ a finite group. The symbols $|\Omega|$ and $|G|$ denote the cardinality of $\Omega$ and the order of the group $G$, respectively. For a prime $p$, the group $G$ is called a {\it $p$-group} (respectively, {\it $p'$-group}) if $|G|$ is a $p$-power (respectively, $p\nmid |G|$). By $H<G$ and $H\leq G$ we mean that $H$ is a proper subgroup and a subgroup of $G$, respectively, and we use $|G:H|$ to denote the index of $H$ in $G$. For a finite group $G$ and a subgroup $H$, we use $K_H$ to denote the trivial $KH$-module. For a $KH$-module $M$, we write $M^G_H := M \otimes_{KH} KG$ for the induced module. For a $KG$-module $N$, we use $N_H$ to mean the restriction $N$ as a $KH$-module. For two $KG$-modules $M$ and $N$, by $M \mid N$ we mean $M$ is isomorphic to a direct summand of $N$ (as $KG$-modules). For Green's theory on modular representation of finite groups, for instance, vertices of indecomposable modules and Green correspondence, we refer the reader to \cite{Al}.

\medskip
Let $\Omega := \{1, 2, \ldots, n\}$ be a finite set. We denote by $A_n$ and $S_n$ the alternating and symmetric groups on $\Omega$, respectively. Throughout, we denote by $1_\Omega$ the diagonal subset of the Cartesian product $\Omega \times \Omega$, given by $\{(\alpha, \alpha) \mid \alpha \in \Omega\}$. For a subset $\Delta \subseteq \Omega$, we denote by $1_\Delta$ the diagonal subset of $\Delta \times \Delta$ consisting of $\{(\alpha, \alpha) \mid \alpha \in \Delta\}$.
For a subset $r \subseteq \Omega \times \Omega$ and an element $\gamma \in \Omega$, we define
$$r^* := \{(\beta, \alpha) \mid (\alpha, \beta) \in r\}, \quad \gamma r := \{\delta \in \Omega \mid (\gamma, \delta) \in r\}, \quad \mathrm{{and}} \quad \Omega(r) := \{\alpha \in \Omega \mid \alpha r \neq \emptyset\}.$$
Let $ G \leq \mathrm{Sym}(\Omega) $ be a transitive permutation group, where we denote by $\mathrm{Sym}(\Omega) $ the symmetric group on a nonempty set $ \Omega $. We denote the point stabilizer of a point $ \alpha \in \Omega $ by $ G_\alpha $, and we call the cardinalities of $G_\alpha$-orbits the \emph{subdegrees} of $ G $.
The group $ G $ is \emph{primitive} if every point stabilizer of $ G $ is a maximal subgroup of $G $. The $K $-linear span of $ \Omega $, denoted $ K\Omega $, becomes a $ KG $-module (called the \emph{natural} $KG $-module) in the following way: 
\[
\big( \sum_{\alpha \in \Omega} a_\alpha \alpha \big) \cdot g := \sum_{\alpha \in \Omega} a_\alpha (\alpha \cdot g), \quad, \forall~ g \in G, ~\forall~ \sum_{\alpha \in \Omega} a_\alpha \alpha \in K\Omega.
\]
For any non-empty subset $ X \subseteq G $, we define the \emph{sum element} of $X$ by $ \underline{X} = \sum_{x \in X} x $.
Fix a point $ \alpha \in \Omega $, and let $ H = G_\alpha $ be its point stabilizer. Then it is directly to verify that $ K\Omega \cong \underline{H}KG $ as
$KG$-modules.
Further, let $ p $ denote the characteristic of $ K $. If $ p \nmid |G:H| $, then $ \underline{H}KG $ admits the decomposition
\[
\underline{H}KG = \underline{G}KG \oplus \left( \underline{H} - [G:H]^{-1}\underline{G} \right) KG.
\]
This decomposition shows that $K_G \cong \underline{G}KG $ is a direct summand of $ K\Omega $ whenever $ p \nmid |G| $. This fact will be used throughout this paper without any reference.

\medskip

For a positive integer $ n $, we denote by $ M_n(K) $ the full $ n \times n $ matrix algebra over $ K $; we further denote by $P_n(K) $ the set of permutation matrices in $ M_n(K) $. For $ 1 \leq i, j \leq n $, the matrix unit $ e_{ij} \in M_n(K) $ is the matrix with a 1 in the $ (i,j) $-entry and 0s elsewhere. For a permutation $\sigma \in S_n $, the associated permutation matrix is given by $ \sum_{j=1}^n e_{j, \sigma(j)} $.

\subsection{Chevalley groups and groups with a split $BN$-pair}
Let $\mathfrak{g}$ be a finite dimensional complex simple Lie algebra with root system $\Phi$. Then Chevalley in \cite{Che} found that $\mathfrak{g}$ has (at least) a good basis with respect to a given Cartan decomposition of $\mathfrak{g}$, called the {\it Chevalley basis}. The $\mathbb{Z}$-span of this basis is a Lie subalgebra of $\mathfrak{g}$, say $\mathfrak{g}_0$. Given any field $K$, Chevalley in \cite{Che} constructed a group in the automorphism group of the Lie algebra $\mathfrak{g}_0\otimes_{\mathbb{Z}} K$, called the {\it Chevalley group} associated with $(\mathfrak{g},K)$. Let $\mathrm{GF}(q)$ be a finite field of $q$ elements. We abbreviate $G(\mathfrak{g},\mathrm{GF}(q))$ by $G(\mathfrak{g},q)$. The Chevalley groups $G(\mathfrak{g},q)$ for a prime power are the finite simple groups of Lie-type, with very few exceptions. They include four families of linear simple groups: $\mathrm{PSL}(n,q)$ (the projective special linear group), $\mathrm{PSU}(n,q)$ (the projective special unitary group), $\mathrm{PSp}(n,q)$ (the projective symplectic group), and $\mathrm{P\Omega^\epsilon}(n,q)$ where $\epsilon=\pm 1$. The following table lists exceptional (untwisted) Chevalley groups.

\begin{table}[htbp]
    \centering
    \begin{tabular}{ccc}
        \hline
        \textbf{Group} & \textbf{Rank} & \textbf{Comment} \\
        \hline
        $G_2(q)$ & 2 & $q=3^{2n+1}$ \\
        $F_4(q)$ & 4 & $q=2^{2n+1}$ \\
        $E_6(q)$ & 6 & $q=2^{2n+1}$ \\
        $E_7(q)$ & 7 &  \\
        $E_8(q)$ & 8 & \\
        \hline
    \end{tabular}
\end{table}

The concept of a $BN$-pair is originated to Tits in \cite{Tits}. They are useful not merely for the study of Chevalley groups, but also in connection with other types of classical simple groups.

\begin{defi}\label{BN}
A pair of subgroups $B, N$ of a group $G$ is called a $BN$-pair at characteristic $p$ if the following axioms are satisfied:
\vskip 2mm
$BN.1$ $G=\langle B, N\rangle$.

$BN.2$ $H=B\cap N$ is a normal subgroup of $N$.

$BN.3$ $W=N/H$ is generated by involutions $w_i$ for $i\in I$, and a preimage of $w_i$ is denoted by $\overline{w_i}$.

$BN.4$ For any $n\in N$ and any $i\in I$, $B\overline{w_i}B \cdot BnB\subseteq B\overline{w_i}nB\cup BnB$ and $B\overline{w_i}B\neq B$.

\medskip
The $BN$-pair is called a {\it split $BN$-pair at characteristic $p$} if the following additional conditions are satisfied:

\medskip
$BN.5$ $H$ is an abelian $p'$-group and $B$ has a normal $p$-subgroup $U$ such that $B=U\rtimes H$.

$BN.6$ $\cap_{n\in N} B^n=H$.
\end{defi}
Let $G$ be a finite group with a split $BN$-pair at characteristic $p$. The group $W$ in $BN.3$ is usually called the {\it Weyl group} of $G$, and we denote it by $\mathcal{W}_G$. A conjugate of $B$ is called a {\it Borel subgroup} of $G$, and any subgroup containing a Borel subgroup is called a {\it parabolic subgroup} of $G$. A classical example for finite groups with a split $BN$-pair is the general linear group over a finite field, where its subgroups of upper triangle matrices and monomial matrices (i.e., matrices with only non-zero entry in each row and column) form a split $BN$-pair. It is well known that any finite Chevalley group $G(\mathfrak{g}, p^n)$ has a split $BN$-pair at characteristic $p$ (see for instance \cite[Chapter 8]{Carter}.

\medskip
The following result was first proved by N. Tinberg in \cite{Ti}.

\begin{Lem}\label{Fro}
Let $G$ be a finite group with a split $BN$-pair at characteristic $p$ and let $K$ be a field of characteristic $p$. Then $\End_{KG}(K(G/U))$ is a Frobenius algebra.
\end{Lem}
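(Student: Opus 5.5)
The statement to prove is Lemma \ref{Fro}: for $G$ with a split $BN$-pair at characteristic $p$ and $K$ of characteristic $p$, the algebra $E:=\End_{KG}(K(G/U))$ is Frobenius.

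The plan is to realise $E$ as a Hecke algebra with an explicit double coset basis, write down a linear form on it, and show that form has no nonzero left ideal in its kernel.

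\textbf{Step 1 (basis of $E$).} Since $K(G/U)\cong \underline{U}KG$, the algebra $E$ is identified with $\underline{U}KG\underline{U}$, with the convolution product suitably normalised; equivalently, it is the Hecke algebra $\mathcal{H}_K(G,U)$. A basis is given by the $U$-double coset sums $T_g=\underline{UgU}$. By the Bruhat decomposition, every element of $G$ can be written as $u\,h\,\dot w\,u'$ with $h\in H$, $w\in W$, and $u'$ ranging over $U_w^-$. Hence $U\backslash G/U$ is in bijection with $N=H\dot W$, so the basis is $\{T_n: n\in N\}$.

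\textbf{Step 2 (structure constants).} Write $T_n$ for the normalised basis element attached to $n\in N$. I will check the following relations:
\begin{itemize}
\item $T_hT_n=T_{hn}$ for $h\in H$, using that $H$ normalises $U$;
\item $T_nT_{n'}=T_{nn'}$ whenever $\ell(w)+\ell(w')=\ell(ww')$;
\item for a simple reflection $s_i$, $T_{\dot s_i}^2=T_{\dot s_i^2}+\sum_{h\in H_i}c_h\,T_{h\dot s_i}$, where $H_i\le H$ comes from the rank-one Levi subgroup.
\end{itemize}
This is where the split hypothesis BN.5 and the commutator relations in the rank-one subgroups $\langle U_{\alpha_i},U_{-\alpha_i}\rangle$ are used. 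They let me compute $U\dot s_iU\dot s_iU$ explicitly.

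\textbf{Step 3 (the Frobenius form).} Define $\lambda:E\to K$ by $\lambda(T_n)=1$ if $n=\dot w_0$ (a fixed lift of the longest element $w_0$) and $\lambda(T_n)=0$ otherwise. The form takes the top coefficient rather than the coefficient at $1$, because $|U\dot wU/U|$ is a power of $p$ and vanishes in $K$. So the naive symmetrising form $T_n\mapsto\delta_{n,1}$ can degenerate, whereas the top coefficient survives.

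\textbf{Step 4 (nondegeneracy).} For each $n=h\dot w$, set $n^\vee=\dot w^{-1}\dot w_0\,h'$ for a suitable $h'\in H$. By the length-additivity of Step 2, $T_nT_{n^\vee}=T_{\dot w_0}$ plus terms of strictly smaller length. Order the basis by length and then by $H$. The Gram matrix $\big(\lambda(T_nT_{n'})\big)$ is then block triangular with invertible (monomial) diagonal blocks, so it is nonsingular, and $E$ is Frobenius.

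\textbf{Main obstacle.} The hard point is controlling the lower terms: I must show that a product $T_nT_{n'}$ with $\ell(w)+\ell(w')\le\ell(w_0)$ can hit $T_{\dot w_0}$ only when $w'=w^{-1}w_0$. I would handle this by induction on $\ell(w)$ with the quadratic relation of Step 2. Multiplication by $T_{\dot s_i}$ raises the length by at most one, so it can reach $T_{\dot w_0}$ only in the length-additive case. This part follows Tinberg's argument.
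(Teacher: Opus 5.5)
The paper does not prove this lemma. It quotes it from Tinberg, so your proposal is best read as a self-contained substitute for that citation, and as such it is correct. It is the standard argument: you take the functional $\lambda(T_n)=\delta_{n,\dot w_0}$ and show the Gram matrix $\bigl(\lambda(T_nT_{n'})\bigr)$ is block triangular.

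Compared with the citation, your route shows why the result holds, and it gives slightly more. The diagonal blocks are permutation matrices, so the Gram determinant is $\pm1$ already over $\mathbb{Z}$. Hence $\mathcal{H}_R(G,U)$ is Frobenius over every commutative ring $R$. The characteristic $p$ enters only in your explanation of why the trace $T_n\mapsto\delta_{n,1}$ degenerates. That is consistent with the failure of symmetry in the $\GL(3,q)$ example of Section~\ref{exam}.

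You can simplify Step 2 considerably. The quadratic relation and the commutator relations in $\langle U_{\alpha_i},U_{-\alpha_i}\rangle$ are never used in Step 4. Root subgroups and the $U_w^-$ normal form of Step 1 are not supplied by BN.1--BN.6 for an abstract split $BN$-pair anyway. Everything you actually need follows from the axioms and from the fact that $U$ is the normal Sylow $p$-subgroup of $B$ with $U\cap H=1$:

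(i) $UnU=Un'U$ forces $n=n'$. Indeed $n'=hn$ with $h\in H$. If $hn=unu'$, then $u^{-1}h\in nUn^{-1}\cap B$ is a $p$-element of $B$, hence lies in $U$, so $h=1$.

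(ii) $T_m$ can occur in $T_nT_{n'}$ only if $m\in BwB\,Bw'B\subseteq\bigcup_v BvB$ with $\ell(v)\le\ell(w)+\ell(w')$. Equality forces $v=ww'$. This follows directly from BN.4, with no induction through the quadratic relation.

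(iii) $T_nT_{n'}=T_{nn'}$ when lengths add. The map $T_m\mapsto|UmU/U|=|BvB/B|$ is a ring homomorphism to $\mathbb{Z}$. The index $|BvB/B|$ is multiplicative along length-additive products, and $nn'$ lies in the product set. Together these force a single term with coefficient $1$.

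Two small corrections. First, $T_nT_{n^\vee}$ equals $T_{\dot w_0}$ exactly, with no lower terms. Second, to see block triangularity, order the rows by increasing length and the columns by decreasing length. Finally, depending on the composition convention for endomorphisms, products may come out in reversed order. This is harmless, since an algebra is Frobenius if and only if its opposite is.
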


\subsection{Hecke algebras}
The study of Hecke algebras originates in E. Hecke's 1937 work on endomorphisms of spaces of elliptic modular forms \cite{Hecke}. The notion of abstract Hecke algebras and Hecke pairs was later introduced by G. Shimura in 1959 \cite{SH}. Hecke algebras are closely connected to the representation theory of classical groups, and much of the existing literature focuses on Hecke algebras over fields of characteristic zero and their deformations. In this paper, however, we introduce and investigate abstract Hecke algebras in a setting suited to our purposes.
\begin{defi}
Let $G$ be a group (not necessarily finite) with a subgroup $H$. The pair $(G,H)$ is called a {\it Hecke pair} if the double coset $HgH$ for any $g\in G$ decomposes as finitely many left cosets of $H$ and finitely many right cosets of $H$.
\end{defi}
Clearly, $(G,H)$ is always a Hecke pair whenever $H$ is a finite subgroup. Let $R$ be a commutative ring, and let $(G,H)$  be a Hecke pair. Let $T=\{t_1, t_2, \cdots, t_i, \cdots\}$ be a right transversal for $H$ in $G$ and let $T_0=\{t_{j_1}, t_{j_2},\cdots, t_{j_k}, \cdots\}$ be a set of representative elements of $(H,H)$-double cosets in $G$. Set $\mathcal{H}_R(G,H)={\rm span}_R\{Ht_{j_k}H\mid t_{j_k}\in T_0\}$, the free $R$-module with all $(H,H)$-double cosets in $G$ as a basis.
\medskip

On the other hand, the $R$-linear span of all right cosets of $H$ in $G$ is naturally an $RG$-module. Any element $\varphi$ in the endomorphism ring $\End_{RG}(R(G/H))$ is uniquely determined by its action on the coset $H$ as $(Hg)\varphi=(H)\varphi\cdot g$ for all $g\in G$. In particular, $(H)\varphi \cdot h=(H)\varphi$ for all $h\in H$. For any $t_{j_k}\in T_0$, let $\varphi_{j_k}$ be the unique element in $\End_{RG}(R(G/H))$ such that $(H)\varphi_{j_k}=\sum_{t_i\in T\cap Ht_{j_k}H}Ht_i$. Then it is clear that the set of $\varphi_{j_k}$ with $t_{j_k}$ running over elements in $T_0$ is an $R$-basis of $\End_{RG}(R(G/H))$. Now, there is the following isomorphism of $R$-modules
$$\phi: \mathcal{H}_R(G,H)\rightarrow \End_{RG}(R(G/H)), \sum_{t_{j_k}\in T_0}a_{j_k}Ht_{j_k}H\mapsto \sum_{t_{j_k}\in T_0}a_{j_k} \varphi_{j_k}.$$
\begin{defi}
The free $R$-module $\mathcal{H}_R(G,H)$ with the multiplication given by $x\cdot y:=((x)\phi\cdot (y)\phi)\phi^{-1}$ for all $x, y\in \mathcal{H}_R(G,H)$ is an $R$-algebra, called the {\it Hecke algebra} over $R$ associated to the Hecke pair $(G,H)$.
\end{defi}
Let $H$ be a subgroup of a group $G$. The core of $H$ in $G$, denoted $H_G$, is defined as the intersection of all its conjugates $H_G = \bigcap_{g \in G} g^{-1}Hg$. Clearly, $H_G$ is a normal subgroup of $G$. The following statement is then immediate.

\begin{Lem}\label{modulo}
Let $(G,H)$ be a Hecke pair. Then $(G/H_G, H/H_G)$ is also a Hecke pair and the Hecke algebras $\mathcal{H}_R(G,H)$ and $\mathcal{H}_R(G/H_G,H/H_G)$ are isomorphic over any commutative rings $R$.
\end{Lem}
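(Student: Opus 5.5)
The statement is Lemma~\ref{modulo}. The plan has three steps: show that $N:=H_G$ acts trivially on everything relevant, use this to get a bijection on cosets and double cosets, and then observe that the algebra structures on both sides are the same.

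\emph{Triviality of $N$.} Since $N\subseteq H$ and $N$ is normal in $G$, for every $g\in G$ and $n\in N$ we have
\[
Hg\cdot n=H(gng^{-1})g=Hg .
\]
So $N$ acts trivially on the right cosets $G/H$. The $G$-action on $G/H$ therefore factors through $\bar G:=G/N$.

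\emph{Bijection on cosets.} The map $Hg\mapsto \bar H\bar g$, with $\bar H=H/N$, is a bijection $G/H\to \bar G/\bar H$. This uses the correspondence theorem: $H\supseteq N$, and $Hg=Hg'$ if and only if $\bar H\bar g=\bar H\bar g'$. The bijection is compatible with the $G$-action, where $G$ acts on the target through the quotient map. Since $\bar H\bar g\bar H$ is the image of $HgH$, double cosets correspond bijectively as well. Each double coset $HgH$ is a union of right cosets $Ht$, and these correspond bijectively to the right $\bar H$-cosets contained in $\bar H\bar g\bar H$. Hence finiteness of the number of left and right cosets in each double coset transfers, and $(\bar G,\bar H)$ is a Hecke pair.

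\emph{Isomorphism of algebras.} The bijection above induces an $R$-module isomorphism $R(G/H)\cong R(\bar G/\bar H)$. This is an isomorphism of $RG$-modules, and both modules are inflated from $R\bar G$-modules because $N$ acts trivially. A map commutes with the $G$-action if and only if it commutes with the $\bar G$-action, so
\[
\End_{RG}(R(G/H))=\End_{R\bar G}(R(\bar G/\bar H))
\]
as $R$-algebras under this identification. It remains to match bases. Choose the transversal $\bar T$ to be the image of $T$ and $\bar T_0$ to be the image of $T_0$. The basis endomorphism $\varphi_{j_k}$ sends $H$ to the sum of the cosets in $Ht_{j_k}H$, and this corresponds exactly to the basis endomorphism for $\bar H\bar t_{j_k}\bar H$. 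Via $\phi$, the map $HtH\mapsto \bar H\bar t\bar H$ is thus an $R$-linear bijection of bases. Since the multiplication on each Hecke algebra is transported from the corresponding endomorphism ring, and the two endomorphism rings are identified, this map is an algebra isomorphism.

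None of these steps is a real obstacle. The only point needing care is the well-definedness checks, which rest on $N\subseteq H$ and on the normality of $N$. The argument is independent of $R$, so it holds over any commutative ring.
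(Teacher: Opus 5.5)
Your proof is correct. The paper gives no argument beyond calling the lemma immediate, and your verification is exactly the routine check it leaves implicit: $H_G$ acts trivially on $G/H$, so right cosets, double cosets and the basis endomorphisms $\varphi_{j_k}$ correspond, giving $\End_{RG}(R(G/H))=\End_{R\bar G}(R(\bar G/\bar H))$ compatibly with $\phi$ (the left-coset half of the Hecke condition follows by the symmetric map $gH\mapsto \bar g\bar H$).
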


The right multiplication of $G/H_G$ on the right cosets of $H/H_G$ is core-free and therefore $G/H_G$ may be viewed as a transitive permutation group on the right cosets of $H/H_G$. This establishes a relation between Hecke algebras and endomorphism algebras of permutation modules for transitive permutation groups.
\medskip

The following result was first proved by Iwahori in \cite[Theorem 3.2]{Iwa}.

\begin{Lem}\label{C-H}
Let $G=G(\mathfrak{g},q)$ be a finite Chevalley group with a Borel subgroup $B$. Suppose $K$ is a field for which $q\cdot 1_K$ is an invertible element. Then $\mathcal{H}_K(G,B)\cong K\mathcal{W}_G$ as algebras.
\end{Lem}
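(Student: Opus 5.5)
The approach is to present $\mathcal{H}_K(G,B)$ by generators and relations coming from the $BN$-pair, and then to compare it with $K\mathcal{W}_G$ by a deformation (Tits-type) argument.

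\emph{Step 1: the basis and the relations.} By the Bruhat decomposition $G=\bigsqcup_{w\in\mathcal{W}_G}B\dot wB$, the $(B,B)$-double cosets are indexed by $\mathcal{W}_G$. Hence $\mathcal{H}_K(G,B)$ has the basis $\{T_w\}_{w\in\mathcal{W}_G}$, where $T_w$ corresponds under $\phi$ to the double coset $B\dot wB$. Write $\ell$ for the length function with respect to the simple reflections $s_i$, and put $q_i:=|B\dot s_iB:B|$, which is a power of $q$ (it is the order of a root subgroup). Axiom $BN.4$, together with a coset count, gives the Iwahori relations
\[
T_{s_i}T_w=T_{s_iw}\ \text{ if } \ell(s_iw)>\ell(w), \qquad T_{s_i}^2=q_i\,T_1+(q_i-1)\,T_{s_i}.
\]
Consequently $T_w=T_{s_{i_1}}\cdots T_{s_{i_r}}$ for any reduced expression of $w$, and the $T_{s_i}$ satisfy the braid relations of $\mathcal{W}_G$. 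This step is routine.

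\emph{Step 2: the generic algebra.} Let $A=\mathbb{Z}[u_i^{\pm1/2}]$, with $u_i=u_j$ whenever $s_i,s_j$ are conjugate. Define the generic Iwahori--Hecke algebra $\mathbf{H}_A$ as the free $A$-module on $\{T_w\}$ with the relations of Step 1, $q_i$ replaced by $u_i$. It is a free $A$-algebra of rank $|\mathcal{W}_G|$. We then have:
\begin{itemize}
\item specialising $u_i\mapsto q_i$ gives $\mathcal{H}_K(G,B)$;
\item specialising $u_i\mapsto 1$ gives $K\mathcal{W}_G$.
\end{itemize}
Over the algebraic closure $F$ of $\operatorname{Frac}(A)$, the algebra $\mathbf{H}_F$ is semisimple, since its symmetrising form $\tau(T_w)=\delta_{w,1}$ has Gram determinant a nonzero product of powers of the $u_i$.

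\emph{Step 3: Tits deformation.} By Tits' deformation theorem, for any specialisation $\theta$ into a field $K$ with $\theta(\mathbf{H})$ semisimple, the semisimple algebras $\theta(\mathbf{H})$ and $\mathbf{H}_F$ have the same numerical invariants (the degrees of their simple modules). Applying this to the two specialisations of Step 2, with $K$ a splitting field, shows that both algebras are products of the same matrix algebras. Hence $\mathcal{H}_K(G,B)\cong K\mathcal{W}_G$.

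\emph{Main obstacle: the semisimplicity hypothesis.} Step 3 needs both specialisations to be semisimple. By Maschke, $K\mathcal{W}_G$ is semisimple iff $\operatorname{char}K\nmid|\mathcal{W}_G|$. For $\mathcal{H}_K(G,B)$, Gyoja--Uno type results say it is semisimple iff the Poincaré polynomial $\sum_{w}\prod q_{i_j}$ is nonzero in $K$.

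The invertibility of $q\cdot1_K$ alone does not secure either condition. For example, for $\mathcal{W}_G=S_2$ the element $T_s$ has eigenvalues $q$ and $-1$, so $\mathcal{H}_K(G,B)$ is semisimple iff $q\ne-1$ in $K$, whereas $KS_2$ is semisimple iff $\operatorname{char}K\neq 2$. Thus if $\operatorname{char}K=p$ is odd and $p\mid q+1$, the Hecke algebra is not semisimple while $KS_2$ is, and they are not isomorphic.

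So the key step is to check exactly when the deformation argument applies. I would prove the lemma as Iwahori did: over $\mathbb{C}$, or more generally when $K$ is a splitting field with $\operatorname{char}K\nmid|\mathcal{W}_G|\cdot P_{\mathcal{W}}(q)$. I would then read the hypothesis on $q$ in that sense.

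If an isomorphism is wanted in the modular case, I would try the case $q_i\equiv1$ in $K$ for all $i$. There the relations of Step 1 become literally the Coxeter relations, so $T_{s_i}\mapsto s_i$ is an isomorphism directly, with no semisimplicity needed.
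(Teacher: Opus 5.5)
You are right that the lemma is false as printed, and that is the decisive point. The paper gives no argument of its own, only the attribution to Iwahori. The classical identification of the Iwahori--Hecke algebra with the group algebra of $\mathcal{W}_G$ belongs to the semisimple situation (e.g.\ over $\mathbb{C}$), not to every field in which $q$ is invertible.

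The paper's own Proposition~\ref{2-tr} confirms your rank-one computation. Let $G=\mathrm{PSL}(2,4)$ act on the five points of the projective line over $\mathbb{F}_4$. Then $B$ is a point stabilizer, $\mathcal{W}_G\cong S_2$, and the action is $2$-transitive. Take $K$ of characteristic $5$. Then $4\cdot 1_K$ is invertible, yet $\mathcal{H}_K(G,B)\cong K[X]/(X^2)$ because $5$ divides the degree, while $KS_2\cong K\times K$. The same example refutes Lemma~\ref{He}.

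So there is no proof in the paper to compare with. Restricting to the case where both sides are semisimple, or to the case $q\cdot 1_K=1_K$, is the right repair. In the semisimple case, keep $q\cdot1_K\neq0$ as an explicit hypothesis. Your condition $\operatorname{char}K\nmid|\mathcal{W}_G|\cdot P_{\mathcal{W}}(q)$ does not imply it, since $P_{\mathcal{W}}(q)\equiv 1$ modulo the prime dividing $q$. For $\mathcal{W}_G=S_3$ in that characteristic you would get the $0$-Hecke algebra, which is not semisimple. Your argument for the case $q\cdot1_K=1_K$ is fine: the Iwahori relations become the Coxeter relations, and you then compare dimensions.

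Within your repaired proof, Step~2 is wrongly justified. The Gram determinant $\pm\prod_w u_w$ of $\tau(T_w)=\delta_{w,1}$ being nonzero shows only that $\mathbf{H}_F$ is symmetric. Symmetric algebras need not be semisimple. Indeed, this determinant specializes to a unit whenever $q\cdot1_K$ is invertible, including in the example above, where the algebra is $K[X]/(X^2)$.

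Use instead the trace form of the regular representation, $(h,h')\mapsto\operatorname{Tr}_{\rm reg}(hh')$. Its discriminant with respect to $\{T_w\}$ lies in $A$. At $u_i=1$ it specializes to the discriminant of $\mathbb{Z}\mathcal{W}_G$, namely $\pm|\mathcal{W}_G|^{|\mathcal{W}_G|}\neq0$, so it is nonzero. A non-degenerate regular trace form forces the radical to vanish, since $\operatorname{Tr}_{\rm reg}(rh)=0$ for every $r$ in the radical. With this fix, Steps~2--3 are the standard Tits deformation argument and prove the corrected statement for split $K$.

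Your $\tau$-computation is in fact exactly what the paper needs downstream. Proposition~\ref{p-S}(vi) uses only that $\mathcal{H}_K(G,B)$ is symmetric when $\operatorname{char}K\neq q$, and this already follows from Proposition~\ref{Sym}. The subdegrees of $G$ on $G/B$ are the indices $|B:B\cap B^{w}|$. These divide $|B:B\cap N|=|U|$, because $B\cap N\le B\cap B^{w}$, so they are powers of $q$. The form constructed in Proposition~\ref{Sym} is precisely your $\tau$.
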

This result can be generalized to arbitrary finite group $G$ with a split $BN$-pair in place of a Chevalley group.
\begin{Lem}\label{He}
Let $G$ be a finite groups with a split $BN$-pair at characteristic $q$. Suppose $K$ is a field for which $q\cdot 1_K$ is an invertible element. Then $\mathcal{H}_K(G,B)\cong K\mathcal{W}_G$ as algebras.
\end{Lem}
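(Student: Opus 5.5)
The plan is to repeat Iwahori's argument behind Lemma \ref{C-H}, using only the axioms $BN.1$--$BN.6$, and to avoid any use of the Chevalley-basis realisation of $G$.

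\textbf{Step 1: Bruhat decomposition and degrees of double cosets.} From $BN.1$--$BN.4$ we get the standard Bruhat decomposition $G=\bigsqcup_{w\in\mathcal{W}_G} B\dot wB$, the double cosets being pairwise distinct. Hence $\mathcal{H}_K(G,B)$ has basis $\{T_w\}_{w\in\mathcal{W}_G}$, where $T_w$ is the element attached to $B\dot wB$. Using the length function $\ell$ on the Coxeter system $(\mathcal{W}_G,\{w_i\}_{i\in I})$, we then prove the following.

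(a) $T_{w_i}T_w=T_{w_iw}$ whenever $\ell(w_iw)>\ell(w)$. In that case $B\dot w_iB\cdot B\dot wB=B\dot w_i\dot wB$, and a counting argument with $|B\dot wB/B|$ shows that the structure constant equals $1$.

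(b) $T_{w_i}^2=q_i\cdot 1+(q_i-1)T_{w_i}$, where $q_i=|B\dot w_iB:B|=|B:B\cap B^{\dot w_i}|$.

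\textbf{Step 2: using the split structure.} This is where $BN.5$ and $BN.6$ enter. Since $B=U\rtimes H$ with $H\subseteq B\cap B^{\dot w_i}$, the index $q_i$ equals $|U:U\cap U^{\dot w_i}|$, which is a power of $q$ because $U$ is a $q$-group. Therefore every $q_i$ is invertible in $K$. I expect (b) to be the main obstacle. The square lies in $B\dot w_iB\cup B$ by $BN.4$. The coefficient of $T_1$ is the subdegree $q_i$. The coefficient of $T_{w_i}$ is then forced by counting cosets: $q_i^2=q_i+c\,q_i$, so $c=q_i-1$.

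\textbf{Step 3: the isomorphism with $K\mathcal{W}_G$.} The result is the generic Iwahori--Hecke algebra specialised at the parameters $q_i$, all invertible in $K$. Two routes are available.

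(i) The deformation argument. Over $\mathbb{Z}[q^{\pm1/2}]$, or Tits' deformation theorem, when $|\mathcal{W}_G|$ is invertible as well.

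(ii) An explicit isomorphism, which is needed because $K$ may be modular for $\mathcal{W}_G$. Note that $T_{w_i}$ satisfies $(T_{w_i}-q_i)(T_{w_i}+1)=0$. To obtain an isomorphism with $K\mathcal{W}_G$ that works for arbitrary $K$, one would have to argue as in Iwahori's proof of \cite[Theorem 3.2]{Iwa}, where $q$-invertibility is the only hypothesis.

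I would therefore follow \cite{Iwa} verbatim, checking at each step that only (a), (b), the braid relations among the $T_{w_i}$ (which follow from (a) via reduced expressions), and the invertibility of the $q_i$ are used. Concretely, I would define $\psi:K\mathcal{W}_G\to\mathcal{H}_K(G,B)$ on the Coxeter generators and verify the Coxeter relations, using Matsumoto's theorem to pass from braid relations to a well-defined map on reduced words. Comparing dimensions, both equal to $|\mathcal{W}_G|$, then gives bijectivity. Passing from the quadratic relation in (b) to the involution relation $w_i^2=1$ is the delicate point; there I would rely on Iwahori's construction rather than a naive rescaling of $T_{w_i}$.
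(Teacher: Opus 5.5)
\textbf{The gap is in Step 3, and it cannot be closed, because the statement is false when only $q\cdot 1_K$ is assumed invertible.} Your Steps 1 and 2 are correct and standard. They give $T_{w_i}T_w=T_{w_iw}$ when $\ell(w_iw)>\ell(w)$, and $T_{w_i}^2=q_i\cdot 1+(q_i-1)T_{w_i}$, with every $q_i=|U:U\cap U^{\dot w_i}|$ a power of $q$.

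Here is a counterexample. Take $G=\mathrm{SL}(2,2)\cong S_3$ with its standard split $BN$-pair at characteristic $2$. Then $B=U$ has order $2$, $H=1$, $\mathcal{W}_G$ has order $2$, and $G$ acts $2$-transitively on the three points of $G/B$. Let $K$ have characteristic $3$, so $2\cdot 1_K$ is invertible.
\begin{itemize}
\item $\mathcal{H}_K(G,B)=K\cdot 1\oplus K\cdot T$ with $T^2=2\cdot 1+T$. Hence $(T+1)^2=3(T+1)=0$ while $T+1\neq 0$, so $\mathcal{H}_K(G,B)\cong K[X]/(X^2)$. This is exactly the case $p\mid n$ of Proposition~\ref{2-tr}.
\item $K\mathcal{W}_G\cong K\times K$ is semisimple.
\end{itemize}
The mechanism is general. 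The element $\sum_w T_w$ (the all-ones matrix) is central with square $|G:B|\sum_w T_w$. So $\mathcal{H}_K(G,B)$ is never semisimple when $\mathrm{char}\,K$ divides $|G:B|$, while $K\mathcal{W}_G$ is semisimple whenever $\mathrm{char}\,K\nmid|\mathcal{W}_G|$. For instance, $\mathrm{SL}(2,q)$ with any odd prime divisor of $q+1$ as the characteristic is another counterexample. Since $\mathrm{SL}(2,2)$ is itself a Chevalley group, Lemma~\ref{C-H} as stated fails too. So deferring the crucial step to Iwahori's paper cannot rescue the argument.

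Both of your routes break exactly at the point you flagged.
\begin{itemize}
\item Route (i) is wrong as stated. The deformation argument needs the specialisation at the $q_i$ to be semisimple, and invertibility of $|\mathcal{W}_G|$ does not give that: in the example $|\mathcal{W}_G|=2$ is invertible.
\item In route (ii), $T_{w_i}$ satisfies $(T_{w_i}-q_i)(T_{w_i}+1)=0$, which has a double root when $q_i\cdot 1_K=-1_K$ in odd characteristic. In the rank-one example, $\mathcal{H}_K(G,B)=K[T]\cong K[X]/(X^2)$ contains no involutions except $\pm 1$, so there is nothing to send the Coxeter generator to. The obstruction is an isomorphism invariant (semisimplicity), not a matter of finding a cleverer construction.
\end{itemize}

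What your Steps 1--2 do prove is the following.
\begin{itemize}
\item[(a)] The isomorphism holds when $q\cdot 1_K=1_K$, i.e.\ when $\mathrm{char}\,K$ divides $q-1$. Then the quadratic relations read $T_{w_i}^2=1$ and the presentation becomes the Coxeter presentation of $\mathcal{W}_G$. Over an algebraically closed field of characteristic $0$, Tits' deformation theorem also applies.
\item[(b)] Symmetry holds for every $K$ with $q\cdot 1_K\neq 0$, which is the property the paper actually uses later. The form $\tau(T_w)=\delta_{w,1}$ satisfies $\tau(T_xT_y)=\delta_{xy,1}\,q_x$, with $q_x=|B\dot xB:B|$ a power of $q$, so it is a symmetrizing form. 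Equivalently, all subdegrees of $G$ on $G/B$ are powers of $q$, so Proposition~\ref{Sym} applies directly.
\end{itemize}
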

\subsection{Coherent algebras and Schur rings}
\subsubsection{Coherent algebras}

In this subsection, we introduce the notion of coherent algebras.
\medskip

To introduce coherent algebras, we need the concept of coherent configurations. A {\it coherent configuration} is defined as a pair $ \mathcal{C}=(\Omega, S) $, where $S $ partitions $\Omega \times \Omega $, subject to the following conditions:

(1) $1_{\Omega} \in S^{\cup}$, where the elements of $S^{\cup}$ are unions of some elements from $S$,

(2)  $S^{*} = S$, where $S^{*}:=\{r^{*}\mid r\in S\}$,

(3)  For any $r, s, t\in S$, the number $C^t_{rs} := |\alpha r\cap \beta s^{*}|$ does not depend on the choice of $(\alpha, \beta)\in t$, here $|\alpha r\cap \beta s^{*}|$ is the number of $\gamma\in \Omega$ such that $(\alpha,\gamma)\in r, ~(\gamma, \beta)\in s$.

\medskip
Each element in the set $S$ is called a {\it basic set}. For any $\delta \in \Omega ({s})$, the positive integer $|\delta s|$ equals to the intersection number $c_{s s^{*}}^{1_{\Omega ({s})}}$, hence does not depend on the choice of $\delta$ and we call it the {\it valency} of $s$. By \cite{CP}, the following equalities hold:
$$
|t| C_{r s}^{t^{*}}=|r| C_{s t}^{r^{*}}=|s| C_{t r}^{s^{*}}, \quad r, s, t \in S.
$$

For a permutation group $G \leq \operatorname{Sym}(\Omega)$, one can associate a coherent configuration $\operatorname{Inv}(G,\Omega)$ to it, as described in \cite[Chapter 2]{CP}. The set of its basic relations consists precisely of all 2-orbits of the action of $G$ on $\Omega \times \Omega$. A relation $s \in \operatorname{Orb}(G, \Omega^2)$ is called \emph{reflexive} if $(\alpha, \alpha) \in s$ for some (hence all) $\alpha \in \Omega$.

\medskip
Now let $A_{\mathbb{Z}}(\mathcal{C}) := \mathbb{Z} S$ be the free $\mathbb{Z}$-module with elements in $S$ as a basis. It becomes a ring by defining multiplication on basis elements as
\[
r \cdot s := \sum_{t \in S} c^t_{rs} \, t \qquad (r, s \in S),
\]
and extending this linearly to all of $A_{\mathbb{Z}}(\mathcal{C})$. The algebra $A_R(\mathcal{C})=A_{\mathbb{Z}}(\mathcal{C})\otimes_{\mathbb{Z}} R$ for a commutative ring $R$ is called the \emph{$R$-coherent algebra} associated with $\mathcal{C}$. $\mathbb{C}$-Coherent algebras, was first introduced by D. Higman \cite{DH}, in his attempt to study quasisymmetric designs.

\medskip

The following result illustrates the importance of $K$-coherent algebras in the study of endomorphism algebras of permutation modules for permutation groups.

\begin{Lem}\label{Coherent}
Let $G$ be a permutation group (not necessarily transitive) on a set $\Omega$. Then for any commutative ring $R$, there is an isomorphism $\End_{RG}(R\Omega)\cong A_R(\mathcal{C})$, where $\mathcal{C}=(\Omega, S)$ and $S$ is the set of the orbits of the induced diagonal action of $G$ on $\Omega\times \Omega$.
\end{Lem}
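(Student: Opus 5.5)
We prove Lemma~\ref{Coherent} by writing an endomorphism of $R\Omega$ as an $\Omega\times\Omega$ matrix and showing that $G$-equivariance means exactly that the matrix is constant on the $2$-orbits of $G$. The structure constants of the resulting matrix basis will then turn out to be the intersection numbers $c^t_{rs}$.

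First, since $R\Omega$ is free over $R$ with basis $\Omega$, every $R$-linear map $\varphi\colon R\Omega\to R\Omega$ has a unique matrix $(a_{\alpha\beta})_{\alpha,\beta\in\Omega}$ defined by $(\alpha)\varphi=\sum_\beta a_{\alpha\beta}\beta$. Because modules are right modules, composition $\varphi\psi$ (first $\varphi$, then $\psi$) corresponds to the matrix product $AB$.

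Next, $\varphi$ is a $RG$-homomorphism if and only if $(\alpha g)\varphi=((\alpha)\varphi)g$ for all $\alpha\in\Omega$ and $g\in G$. Comparing coefficients of $\beta g$ gives the condition $a_{\alpha g,\beta g}=a_{\alpha\beta}$ for all $\alpha,\beta,g$. In other words, $\varphi$ is $G$-equivariant exactly when $a$ is constant on each orbit $s\in S$ of $G$ on $\Omega\times\Omega$.

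Therefore $\End_{RG}(R\Omega)$ is a free $R$-module with basis the adjacency matrices $A_s$, where $(A_s)_{\alpha\beta}=1$ if $(\alpha,\beta)\in s$ and $0$ otherwise. Transitivity of $G$ is never used here; when $G$ is intransitive, $S$ simply contains several reflexive orbits, one for each $G$-orbit on $\Omega$.

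We then compute the product of two basis elements:
\[
(A_rA_s)_{\alpha\beta}=|\{\gamma\mid (\alpha,\gamma)\in r,\ (\gamma,\beta)\in s\}|=|\alpha r\cap \beta s^*|.
\]
For $(\alpha,\beta)\in t$ and $g\in G$, the map $\gamma\mapsto \gamma g$ is a bijection between the corresponding sets for $(\alpha,\beta)$ and for $(\alpha g,\beta g)$, because $r$ and $s$ are $G$-invariant. So this number depends only on $t$, and it is exactly $c^t_{rs}$. Hence
\[
A_rA_s=\sum_t c^t_{rs}A_t.
\]

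The same argument verifies that $\mathcal C=(\Omega,S)$ is a coherent configuration:
\begin{enumerate}
\item[(1)] $1_\Omega$ is a $G$-invariant subset of $\Omega\times\Omega$, so it is a union of orbits, i.e.\ $1_\Omega\in S^{\cup}$.
\item[(2)] $r^*$ is again a $G$-orbit for every $r\in S$, so $S^*=S$.
\item[(3)] The invariance of $c^t_{rs}$ is the bijection argument above.
\end{enumerate}

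Finally, define $A_{\mathbb Z}(\mathcal C)\to \End_{\mathbb ZG}(\mathbb Z\Omega)$ by $s\mapsto A_s$. This is a ring isomorphism: it sends the basis $S$ bijectively to the basis $\{A_s\}$ and respects the multiplication $r\cdot s=\sum_t c^t_{rs}t$. The identity element is preserved, since $\sum_{s\subseteq 1_\Omega}s$ maps to the identity matrix. Tensoring with $R$ gives $A_R(\mathcal C)=A_{\mathbb Z}(\mathcal C)\otimes_{\mathbb Z}R$. The identification $\End_{RG}(R\Omega)\cong\End_{\mathbb ZG}(\mathbb Z\Omega)\otimes_{\mathbb Z}R$ holds because both sides are free with the same orbit basis and the same structure constants. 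Alternatively, one can carry out the whole argument over $R$ directly.

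The only point requiring care is the direction convention: using right modules, one must check that the matrix product matches the order $c^t_{rs}$ rather than $c^t_{sr}$. This is settled by the formula for $(A_rA_s)_{\alpha\beta}$ above, where $\gamma$ follows $\alpha$ through $r$ and then reaches $\beta$ through $s$. Everything else is routine.
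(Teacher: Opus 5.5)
Your proof is correct. There is nothing in the paper to compare it with, because the paper never proves this lemma. It quotes it as a standard fact: the centralizer algebra of a permutation representation is the algebra spanned by its $2$-orbits. The only related content is the remark immediately after the lemma that $A_K(\mathcal{C})$ is realized as $\sum_{X\in S}KA_X\subseteq M_n(K)$, and that is exactly the basis $\{A_s\}$ you construct.

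Your argument is the standard one that the paper omits, and each step holds:
\begin{enumerate}
\item[(a)] Equivariance is equivalent to $a_{\alpha g,\beta g}=a_{\alpha\beta}$.
\item[(b)] The matrices $A_s$ form an $R$-basis of $\End_{RG}(R\Omega)$, because their supports partition $\Omega\times\Omega$.
\item[(c)] The entry $(A_rA_s)_{\alpha\beta}=|\alpha r\cap\beta s^*|$ depends only on the orbit $t\ni(\alpha,\beta)$, via the bijection $\gamma\mapsto\gamma g$.
\end{enumerate}
This also verifies the coherent-configuration axioms, and it gives associativity of $A_{\mathbb Z}(\mathcal{C})$ for free.

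Two minor remarks. First, the detour through $\mathbb{Z}$ and the base change $\End_{RG}(R\Omega)\cong\End_{\mathbb{Z}G}(\mathbb{Z}\Omega)\otimes_{\mathbb{Z}}R$ is unnecessary. Your computation runs verbatim over $R$, with structure constants $c^t_{rs}\cdot 1_R$, so you can drop that step. Preservation of the identity is likewise automatic for a bijective multiplicative map.

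Second, the direction convention you flag is correct as you handle it, and it is harmless in any case. Since $A_r^{T}=A_{r^*}$ and $S^*=S$, transposition is an anti-automorphism of the coherent algebra. The algebra is therefore isomorphic to its opposite, so writing endomorphisms on the left instead of the right would give the same isomorphism type.
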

Without loss of generality, we may take $\Omega$ to be the set $\{1, 2, \cdots, n\}$. Then $K$-coherent algebra $A_K(\mathcal{X})$ can be realized as subalgebras of the matrix algebra $M_n(K)$ by the following way:
For each $X\in S$, define $A_X\in M_n(K)$ to be the matrix such that
\[
(A_X)_{ij}=\left\{
\begin{array}{rcl}
1   &    &{(i,j)\in X}\\
0   &    &{(i,j)\notin X}\\
\end{array}
\right.
\]
Then the linear span $\sum_{X\in S} KA_X$ of the matrices $A_X$ for $X\in S$ is indeed a subalgebra of $M_n(K)$, and it is directly to check that $A_K(\mathcal{C})\cong \sum_{X\in S} KA_X$. In the rest of this paper, we also call $\sum_{X\in S} KA_X$ a $K$-coherent algebra.
\subsubsection{Schur rings}

In the literature, since Schur algebras are usually referred to endomorphism algebras for a specific module over the symmetric groups, we abuse notation and use the notion of Schur rings (even they are $K$-algebras in our sense) in this paper. Schur rings are usually studied as algebras over the complex number field in relation to permutation groups and combinatorics. But for the purpose of this paper, we will study Schur rings over arbitrary field.

\medskip
Given a finite group $G$, a subring $\mathcal{A}$ of the algebra $KG$ is called a {\it $K$-Schur ring}  over $G$ if it has a linear $K$-basis consisting of elements $X$, where $X$ runs over a family $\mathcal{S}(\mathcal{A})$ of pairwise disjoint nonempty subsets of $G$ such that

$(1)$ $\{e\} \in \mathcal{S}$,

$(2)$ $\bigcup\limits_{X\in \mathcal{S}}X=G$,

$(3)$ $X^{-1}:=\{x^{-1} \mid x\in X\}\in \mathcal{S}$ for any $X\in \mathcal{S}(\mathcal{A})$.

\medskip
The elements of $\mathcal{S}(\mathcal{A})$ are called the \emph{basic sets} of $\mathcal{A}$. These basic sets form a partition of $G$, called the {\it Schur partition } of $G$. Clearly, if the field $K$ is fixed, then the Schur ring is uniquely determined by the Schur partition associated to it. The group algebra $KG$ and the algebras spanned by $1,~ \sum_{1\neq g\in G}g$ are trivial examples of $K$-Schur rings over $G$, which are called {\it trivial Schur rings} over $G$. A subgroup $H$ is called an {\it $\mathcal{A}$-subgroup} if it is a union of basic sets. Clearly, the identity subgroup and the group $G$ itself are always $\mathcal{A}$-subgroups. Let $H$ be a normal $\mathcal{A}$-subgroup. Then it is directly to check that either $HX=HY$ or $HX\cap HY=\emptyset$ for any two basic sets $X, Y\in \mathcal{S}(\mathcal{A})$.  Consequently, the linear span of $HX$ for all $X\in \mathcal{S}(\mathcal{A})$ is a Schur ring over $G/H$, which is denoted by $\mathcal{A}_{G/H}$.

\medskip

Given a $K$-Schur ring $\mathcal{A}$, define $X'=\{(g, gx)\in (G, G)\mid x\in X, g\in G)\}$ for all $X\in \mathcal{S}(\mathcal{A})$. Then elements in $Z_{\mathcal{A}}:=\{X'\mid X\in \mathcal{S}(\mathcal{A})\}$ form a partition of $G\times G$, and it is directly to check that $(G, Z_{\mathcal{A}})$ is a coherent configuration. We say this coherent configuration is the one associated with the Schur partition of $\mathcal{A}$. With this notation, the following result is obvious.
\begin{Lem}\label{CO-SCH}
Any $K$-Schur ring is isomorphic to the $K$-coherent algebra associated with the coherent configuration obtained by its Schur partition.
\end{Lem}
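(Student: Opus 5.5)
The last statement is Lemma \ref{CO-SCH}: a $K$-Schur ring $\mathcal{A}$ over $G$ is isomorphic, as an algebra, to the $K$-coherent algebra of the coherent configuration $(G,Z_{\mathcal{A}})$ built from its Schur partition. I would write down an explicit linear map that matches the two bases and check that it respects multiplication, by comparing structure constants on both sides.

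First I would record the structure constants of $\mathcal{A}$. Since $\mathcal{A}$ is a subring with basis $\{\underline{X}\mid X\in\mathcal{S}(\mathcal{A})\}$, we can write
$$\underline{X}\,\underline{Y}=\sum_{Z\in\mathcal{S}(\mathcal{A})} a^{Z}_{XY}\,\underline{Z}.$$
Because the basic sets are disjoint, $a^{Z}_{XY}$ is the coefficient of any fixed $z\in Z$ in the group-algebra product. This coefficient equals the number of pairs $(x,y)\in X\times Y$ with $xy=z$, taken modulo the characteristic. I would note that this integer count is independent of the choice of $z\in Z$. The reason is that the same product can be computed over $\mathbb{Z}G$, where it has integer coefficients constant on each $Z$. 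I would take this as implicit in the definition: the partition is a Schur partition whose integral span is closed under multiplication, as the paper clearly intends.

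Second, I would compute the intersection numbers of $(G,Z_{\mathcal{A}})$. Recall $X'=\{(g,gx)\mid g\in G,\ x\in X\}$. For $(\alpha,\beta)=(g,gz)\in Z'$, the number $C^{Z'}_{X'Y'}$ counts the $\gamma$ with $(g,\gamma)\in X'$ and $(\gamma,gz)\in Y'$. Writing $\gamma=gx$, these conditions say $x\in X$ and $x^{-1}z\in Y$. So the count is exactly the number of pairs $(x,y)\in X\times Y$ with $xy=z$, which is the integer form of $a^{Z}_{XY}$. This also shows along the way that the axioms of a coherent configuration hold: the identity axiom from $\{e\}\in\mathcal{S}(\mathcal{A})$, and the inverse axiom from $X^{-1}\in\mathcal{S}(\mathcal{A})$, since $(X')^*=(X^{-1})'$.

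Finally, I would define $\theta:\mathcal{A}\to A_K(G,Z_{\mathcal{A}})$ by $\underline{X}\mapsto X'$. This map is a bijection of bases, since $X\mapsto X'$ is injective. It is multiplicative on basis elements by the equality of structure constants above, and it sends $\underline{\{e\}}$, the identity of $\mathcal{A}$, to $1_G$, the identity of the coherent algebra. Hence $\theta$ is an isomorphism of $K$-algebras.

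The only delicate step is the constancy of the integer structure constants. Over a field of positive characteristic, closure of $\mathcal{A}$ under multiplication alone only guarantees that the counts are constant modulo $p$. I would handle this by taking the Schur partition to be defined integrally, that is, by requiring the $\mathbb{Z}$-span of the $\underline{X}$ to be a subring of $\mathbb{Z}G$, which is the standard convention. Under that convention the counts are genuine integers constant on each basic set, and both sides are the base change to $K$ of isomorphic $\mathbb{Z}$-algebras.
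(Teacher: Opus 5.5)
Your proposal is correct and is the direct verification the paper has in mind when it calls the lemma obvious after asserting that $(G,Z_{\mathcal{A}})$ is a coherent configuration. The map $\underline{X}\mapsto X'$ matches the bases, $\{e\}'=1_G$, $(X')^*=(X^{-1})'$, and the coefficient of $z\in Z$ in $\underline{X}\,\underline{Y}$ equals the intersection number $C^{Z'}_{X'Y'}$.

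Your integrality caveat is a real issue. Under the paper's literal definition over a field of characteristic $p$, the counts need only be constant modulo $p$. For example, take $G$ elementary abelian of order $8$ generated by $a,b,c$. The sets $\{e\}$, $\{a,b,c\}$ and their complement span a subalgebra of $\mathbb{F}_2G$. Yet over $\mathbb{Z}$ the coefficient of $ab$ in $(a+b+c)^2$ is $2$, while that of $abc$ is $0$, so $(G,Z_{\mathcal{A}})$ is not a coherent configuration. The integral convention you adopt is therefore exactly the hypothesis the paper tacitly needs for the statement to make sense.
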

Though $K$-Schur rings may be realized as $K$-coherent algebras, but they are of interest in their own right as they are closely related to representation of finite groups.

\medskip
Let $\mathcal{A}$ be a $K$-Schur ring over $G$. If there exists a proper $\mathcal{A}$-subgroup $H<G$ and a normal $\mathcal{A}$-subgroup $U>1$ of $G$ contained in $H$ such that any basic set $X\subseteq G\setminus H$ is a union of $U$-cosets, then we say $\mathcal{A}$ is a wedge product of $\mathcal{A}_H$ and $\mathcal{A}_{G/U}$, and we denote $\mathcal{A}$ by $\mathcal{A}_H\wr_{H/U}\mathcal{A}_{G/U}$. In case $U=H$, we simply write $\mathcal{A}_H\wr \mathcal{A}_{G/H}$.

\medskip
Recall that we define $\underline{X}=\sum_{x\in X} x$ for any $X\subseteq  G$.
Then it is known from \cite[Theorem 1.6]{I} that for $T \leq \Aut(G)$ the $K$-linear space $\mbox{span}\{\underline{X}\mid X\in {\rm Orb}(T, G)\}$ is a $K$-Schur ring over $G$. This Schur ring is called {\it a cyclotomic $K$-Schur ring} over $G$. Moreover, $\mathcal{S}(\mathcal{A})={\rm Orb}(T, G)$.
\medskip

Given two Schur rings $\mathcal{A}_H$ and $\mathcal{A}_G$ over groups $H$ and $G$, respectively, the $K$-linear space
$$
\mbox{span}\{(\underline{X}, \underline{Y})\mid X\in \mathcal{S}(\mathcal{A}_H), Y\in \mathcal{S}(\mathcal{A}_G)\}
$$is clearly also a $K$-Schur ring over the group $H\times G$. This Schur ring is called the {\it tensor product} of $\mathcal{A}_H$ and $\mathcal{A}_G$, and it is denoted by $\mathcal{A}_H\otimes\mathcal{A}_G$.
\medskip

Now let $G$ be a transitive permutation group on a set $\Omega$ that contains a regular subgroup $N$. Fix a point $\alpha \in \Omega$; then $G = G_\alpha N$.
Define a bijection $\pi : \Omega \to N$ by letting $(\beta)\pi$ be the unique element of $N$ satisfying $\alpha \cdot (\beta)\pi = \beta$ for every $\beta \in \Omega$. Using $\pi$, we can transfer the action of $G$ to $N$ via
\[
(\beta)\pi \cdot g \: := \: (\beta \cdot g)\pi \qquad (\beta \in \Omega,\; g \in G),
\]
which makes \(N\) a \(G\)-set and identifies \(N_{\text{right}}\) (the right regular permutation representation of $N$ on itself) as a subgroup of \(G\). The following fact is well known.

\begin{Lem}\label{regul}
Let $G$ be a finite transitive group on a set $\Omega$ with a regular subgroup $N$. Then the orbits in ${\rm Orb}(G_{\{1\}}, N)$ form a Schur partition of $N$ and $\End_{KG}(K\Omega)$ is isomorphic to the $K$-Schur ring $\mathcal{A}_H$ defined by this Schur partition.
\end{Lem}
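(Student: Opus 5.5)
We need to prove Lemma \ref{regul}: $G$ is transitive on $\Omega$ with regular subgroup $N$. The orbits of $G_{\{1\}}$ (the stabilizer of the point corresponding to the identity, i.e. $G_\alpha$) on $N$ form a Schur partition of $N$, and $\End_{KG}(K\Omega)$ is isomorphic to the corresponding Schur ring.

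The plan is to use Lemma \ref{Coherent} together with the coherent configuration viewpoint of Lemma \ref{CO-SCH}. Transport everything to $N$ via $\pi$, so that $\alpha$ corresponds to the identity $e$ and $N$ acts by right multiplication, $x\cdot n = xn$. Write $H=G_\alpha$, the stabilizer of $e$.

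\textbf{Step 1: the 2-orbits.} Every $G$-orbit on $N\times N$ meets $\{e\}\times N$. Indeed, $(x,y)\cdot x^{-1}=(e,yx^{-1})$, using the right-multiplication element $x^{-1}\in N\leq G$. Moreover, $(e,y)$ and $(e,y')$ lie in the same $G$-orbit if and only if $y'\in y\cdot H$. So the $2$-orbits are in bijection with ${\rm Orb}(H,N)$: for $X\in{\rm Orb}(H,N)$ the corresponding $2$-orbit is
\[
X'=\{(g,xg)\mid x\in X,\ g\in N\}.
\]
Here I use the right-action convention; the paper's $(g,gx)$ is the mirror version, and I will fix one convention consistently. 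It remains to check that this set is $G$-stable. It is $N$-stable by construction. For $h\in H$, I will show $(xg)\cdot h = (x\cdot h)(g\cdot h)$ up to the right normalization. This is the key identity, and it follows from comparing $(e\cdot x g)\cdot h$ with the action of the conjugate $h^{-1}(\text{right mult by } xg)h$ restricted to the transported regular structure.

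I expect this compatibility to be the main obstacle. The cleanest route is to avoid it: define $X'$ as the $G$-orbit of $(e,x)$ and show directly that it equals the set above. Take $(u,v)=(e,x)\cdot k$ with $k\in G$, and write $k=hn$ with $h\in H$ and $n\in N$. This is possible since $G=HN$. Then $(u,v)=(e,x\cdot h)\cdot n=(n,(x\cdot h)n)$, which lies in the set above. Conversely, each element $(n,x_1n)$ of that set arises in this way. This settles Step 1 cleanly.

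\textbf{Step 2: Schur axioms.} The orbit $\{e\}$ belongs to the partition because $H$ fixes $e$. The sets cover $N$ and are disjoint, since they are orbits. For closure under inverses, note that $(X')^{*}$ is again a $2$-orbit. It contains $(x,e)=(e,x^{-1})\cdot x$, so $(X')^{*}=(X^{-1})'$, where $X^{-1}$ is the $H$-orbit of $x^{-1}$, and $X^{-1}$ is exactly the set of inverses. Finally, the span of the $\underline{X}$ is closed under multiplication in $KN$. The product $\underline{X}\,\underline{Y}$ has coefficient at $z$ equal to $|\{(x,y)\in X\times Y: xy=z\}|$. Using Step 1, this coefficient equals the intersection number $c^{t}_{\cdot\cdot}$ of $\mathrm{Inv}(G,N)$ for the 2-orbit $t$ containing $(e,z)$, with the two factors matched by the chosen convention. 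It is therefore constant on $H$-orbits, so the product is a combination of the $\underline{Z}$.

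\textbf{Step 3: the isomorphism.} The coherent configuration $\mathrm{Inv}(G,\Omega)$ coincides under $\pi$ with the configuration $Z_{\mathcal A}$ associated with this Schur partition, by Step 1. Lemma \ref{Coherent} gives $\End_{KG}(K\Omega)\cong A_K(\mathrm{Inv}(G,\Omega))$, and Lemma \ref{CO-SCH} identifies $A_K(Z_{\mathcal A})$ with the Schur ring. Composing these yields the claimed isomorphism $\End_{KG}(K\Omega)\cong\mathcal{A}_H$.
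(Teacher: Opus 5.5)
Your proof is correct. The paper gives no argument for this lemma; it is quoted as well known. Your route is exactly what the paper's setup before the lemma points to: transport along $\pi$, identify the $2$-orbits of $G$ on $N\times N$ with ${\rm Orb}(H,N)$ via $(x,y)\mapsto yx^{-1}$, read the Schur axioms off ${\rm Inv}(G,\Omega)$, then chain Lemma \ref{Coherent} with Lemma \ref{CO-SCH}.

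I have two small remarks.

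\textbf{The identity you discarded.} The identity $(xg)\cdot h=(x\cdot h)(g\cdot h)$ that you first planned to prove holds when $N\unlhd G$, since $H$ then acts by conjugation. It fails in general. For the regular $C_4\le S_4$, the unique nontrivial $H$-orbit contains elements of orders $2$ and $4$. You were right to drop it. Your first two observations in Step 1 already pin down the $2$-orbit through $(e,x)$ as $\{(u,v)\mid vu^{-1}\in X\}$.

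\textbf{Factor order in Step 3.} This needs to be settled explicitly. With the right-multiplication convention, take $(e,z)\in t$. Then $c^{t}_{X'Y'}=|\{(x,y)\in X\times Y\mid yx=z\}|$, which is the coefficient of $z$ in $\underline{Y}\,\underline{X}$, not in $\underline{X}\,\underline{Y}$. Two consequences follow:
\begin{itemize}
\item ${\rm Inv}(G,\Omega)$, transported to $N$, does not literally coincide with the paper's $Z_{\mathcal A}=\{(g,gx)\}$.
\item In your own convention, $X'\mapsto\underline{X}$ is only an anti-isomorphism.
\end{itemize}
The repair is one line and uses your Step 2. The bijection $g\mapsto g^{-1}$ of $N$ maps $\{(g,xg)\mid g\in N,\ x\in X\}$ onto $\{(h,hy)\mid h\in N,\ y\in X^{-1}\}$. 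Since $X\mapsto X^{-1}$ permutes the basic sets, this is an isomorphism of coherent configurations from the transported ${\rm Inv}(G,\Omega)$ onto $Z_{\mathcal A}$. Equivalently, $X'\mapsto\underline{X^{-1}}$ is an algebra isomorphism onto $\mathcal{A}$.
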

 By Lemma \ref{regul}, if a finite group $G$ acts transitively on a set $\Omega$ and contains a regular subgroup $N$, then there is an isomorphism between the Hecke algebra $\mathcal{H}_K(G, G_\alpha)$ and the Schur ring $\mathcal{A}_H$, where $G_\alpha$ is a point stabilizer for some $\alpha\in \Omega$ in $G$.

\subsection{Symmetric algebra, Frobenius algebras and self-injective algebras}

Let $A$ be an $K$-algebra. The algebra $A$ is defined to be {\it symmetric} if ${\rm Hom}_{K}(A, K) \cong A$ as $A$-$A$-bimodules. Equivalently, there exists a non-degenerate symmetric associative bilinear form $f: A \times A \rightarrow K$, that is, a mapping $f: A \times A \rightarrow K$ satisfying the following conditions:
\begin{enumerate}
\item[\rm (i)] $ f(a + b, c) = f(a, c) + f(b, c) $ and $ f(a, b + c) = f(a, b) + f(a, c) $ for all $ a, b, c \in A $;
\item[\rm (ii)] $ f(ab, c) = f(a, bc) $ for all $ a, b, c \in A $;
\item[\rm (iii)] $ f(a, b) = f(b, a) $ for all $ a, b \in A $;
\item[\rm (iv)] $ f(ta, b) = f(a, tb) = t f(a, b) $ for all $t \in K $ and $ a, b \in A $;
\item[\rm (v)] If $ f(a, b) = 0 $ for all $ b \in A $, then $ a = 0 $.
\end{enumerate}
If $A$ is a symmetric algebra, then $eAe$ is also a symmetric algebra for any idempotent $e\in A$.

\medskip
The algebra $A$ is called \emph{Frobenius} if $f$ satisfies all these conditions except (iii). Clearly, symmetric algebras are always Frobenius algebras, but the converse is not true in general. There are many nice properties of Frobenius algebras; we refer readers to \cite{SY} for a systematic introduction.

\medskip
The algebra $A$ is called {\it self-injective} if the regular module $A$ is an injective module, or equivalently, all projective $A$-modules are injective, or equivalently, the ${\rm Ext}$-functor $\Ext^1_A(-,A)$ vanishes for all $A$-modules. It is well-known that a Frobenius algebra is necessarily a self-injective algebra. And the following fact is well-known (see for instance \cite[Proposition 4.6, p. 348]{SY}):
$$\mbox{All commutative self-injective algebras are symmetric algebras.}$$

\subsection{Preliminary results}
In this section, we present some propositions that will be useful in our subsequent proofs.  The classification of Schur rings over finite cyclic groups was established by Leung and Man \cite{KHL1, KHL2}. Their work leads directly to the following result.

\begin{Prop}\label{C-S}
Let $K$ be a field and let $\mathcal{A}$ be a $K$-Schur ring over a cyclic group $G$. Then one of the following holds:
\begin{enumerate}
 \item[\rm (i)] $\mathcal{A}$ is trivial;
 \item[\rm (ii)] $\mathcal{A}$ is a cyclotomic $K$-Schur ring;
 \item[\rm (iii)] there exist $\mathcal{A}$-subgroups $H$ and $U$ such that $G=H\times U$ and $\mathcal{A}=\mathcal{A}_H\times\mathcal{A}_U$ is a tensor product of two $K$-Schur rings $\mathcal{A}_H$ and $\mathcal{A}_U$;
 \item[\rm (iv)] there exist proper $\mathcal{A}$-subgroups $U\leq H\leq G$ such that $\mathcal{A}=\mathcal{A}_H\wr_{H/U}\mathcal{A}_{G/U}$ is a wedge product of two $K$-Schur rings $\mathcal{A}_H$ and $\mathcal{A}_{G/U}$.
 \end{enumerate}
\end{Prop}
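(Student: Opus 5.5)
The plan is to reduce to the Leung--Man classification by re-deriving, for an arbitrary field $K$, the structural facts that their proof uses, and then to run their induction on $|G|$ unchanged. Write $G=\langle g\rangle\cong \mathbb{Z}_n$ and let $\mathcal{A}$ be a $K$-Schur ring with partition $\mathcal{S}=\mathcal{S}(\mathcal{A})$. When $\operatorname{char}K=0$, multiplication of $0/1$ sums has integer coefficients. Closure of $\mathcal{A}$ then says those coefficients are constant on each basic set, so $\mathcal{S}$ is a classical Schur partition and Leung--Man applies verbatim. The real case is $\operatorname{char}K=p>0$. There closure only says that the structure coefficients of $\underline{X}\,\underline{Y}$, reduced mod $p$, are constant on basic sets. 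So the work is to recover Leung--Man's input lemmas under this weaker hypothesis.

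\textbf{Step 1 (multipliers).} The key tool in Leung--Man is Schur's theorem: $X^{(m)}:=\{x^m\mid x\in X\}$ is again a basic set for every $m$ coprime to $n$. For $m=p=\operatorname{char}K$ with $p\nmid n$, this is immediate in our setting. Indeed $KG$ is commutative of characteristic $p$, so $\underline{X}^{\,p}=\underline{X^{(p)}}$ as an honest identity, and hence $\underline{X^{(p)}}\in\mathcal{A}$. I would then show $X^{(p)}$ is a single basic set. Since $x\mapsto x^p$ is a bijection and $\mathcal{A}$ has a basis of disjoint sums, $\underline{X^{(p)}}$ is a union of basic sets, and applying the same argument to the inverse power map gives equality. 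For primes $q\ne p$ coprime to $n$, I would use the idempotent/character description instead. After extending $K$ to contain $n$-th roots of unity (harmless, since $\mathcal{A}\otimes \bar K$ is again a Schur ring with the same partition), $\mathcal{A}$ is a subalgebra of the split semisimple or local algebra $\bar KG$. Galois automorphisms $\zeta\mapsto\zeta^{m}$ permute the characters of $G$ and preserve $\mathcal{A}$, because $\mathcal{A}$ is defined over the prime field. This should yield that $X\mapsto X^{(m)}$ permutes $\mathcal{S}$ for every $m$ in the image of the Frobenius, i.e.\ for the powers of $p$ modulo $n$.

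\textbf{Step 2 (the main obstacle).} I expect this step to be the hardest, because a multiplier $m$ coprime to $n$ need not be a power of $p$ modulo $n$, and there the above argument gives nothing. I would first try to show that Leung--Man's induction only ever needs multipliers that are powers of $p$ together with the $\mathcal{A}$-subgroup structure. Concretely, I would use the following facts. First, the stabilizer $\operatorname{rad}(X)=\{h: hX=X\}$ of a basic set is an $\mathcal{A}$-subgroup; this follows from $\underline{X}\,\underline{X^{-1}}$ and the fact that the coefficient at $e$ is $|X|\not\equiv 0$ in cases we control. Second, $\langle X\rangle$ is an $\mathcal{A}$-subgroup. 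Third, for a $p$-subgroup $P$ one has the ``separating'' lemma $\underline{X}^{\,p^k}=\underline{X^{(p^k)}}$. If that does not suffice, the fallback is to split $G=G_p\times G_{p'}$ and handle the two components separately. On $G_{p'}$, $\bar KG_{p'}$ is semisimple and $\mathcal{A}$ is determined by a partition of the characters into blocks, as in the rational theory. On $G_p$, $KG_p$ is local and the power map $\underline{X}^{p}$ lands in $K\cdot 1$ modulo lower terms, which I would use to force wedge-product behaviour.

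\textbf{Step 3 (induction).} With the multiplier and subgroup lemmas in hand, I run Leung--Man's case analysis by induction on $n$. If some basic set $X\ne\{e\}$ generates a proper subgroup, or $\operatorname{rad}(X)\ne 1$ for some $X$ outside a proper $\mathcal{A}$-subgroup, one obtains $\mathcal{A}$-subgroups $U\le H$ giving the wedge decomposition (iv). If $G=H\times U$ with both $\mathcal{A}$-subgroups and the basic sets are products, we are in (iii). Otherwise every basic set is an orbit of a subgroup of $\operatorname{Aut}(G)$ of multipliers, which gives (ii), or there are just two basic sets, which gives (i). The passage to quotients $\mathcal{A}_{G/U}$ and subrings $\mathcal{A}_H$ stays inside the class of $K$-Schur rings, as noted in the paper's preliminaries. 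This makes the induction legitimate.
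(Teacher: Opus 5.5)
Your characteristic-$0$ paragraph is exactly what the paper does: it gives no argument beyond citing Leung--Man, which is legitimate once $\mathcal{S}(\mathcal{A})$ is known to be a genuine (integral) Schur partition. The gap is in Steps 2--3 for $\operatorname{char}K=p>0$.

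\textbf{Step 2 does not go through.} Frobenius only gives you the multipliers in $\langle p,-1\rangle\le(\mathbb{Z}/n\mathbb{Z})^{*}$. Leung--Man's analysis, and your own closing step ``every basic set is an orbit of a group of multipliers,'' rest on Schur's theorem for \emph{all} $m$ coprime to $n$. The other input lemmas you list, that $\mathrm{rad}(X)$ and $\langle X\rangle$ are $\mathcal{A}$-subgroups, are classically proved using nonnegative integer structure constants with exact values. That information is lost when closure holds only modulo $p$. Your fallbacks in Step 2 are plans, not arguments.

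\textbf{The gap cannot be filled, because the statement is false in positive characteristic.} For $K$-Schur rings in the paper's sense with $\operatorname{char}K>0$, there is a counterexample.
\begin{itemize}
\item Take $\operatorname{char}K=2$ and $G=\langle g\rangle\cong C_{31}$. Let $M=\langle 2,-1\rangle\le(\mathbb{Z}/31\mathbb{Z})^{*}$; it has order $10$ and index $3$, with cosets $M,3M,9M$.
\item Put $a=\sum_{m\in M}g^m$, $b=\sum_{m\in 3M}g^m$, $c=\sum_{m\in 9M}g^m$.
\item Since $KG$ is commutative of characteristic $2$ and $2M=M$, we get $a^2=a$, $b^2=b$, $c^2=c$.
\item From $c\,\underline{G}=10\,\underline{G}=0$ and $\underline{G}=e+a+b+c$, we get $c(a+b)=0$. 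Also $(a+b)^2=a+b$.
\item Hence $\mathrm{span}_K\{e,a+b,c\}$ is a subring. It is a $K$-Schur ring with basic sets $\{e\}$, $X=\{g^m: m\in M\cup 3M\}$ and $Y=\{g^m: m\in 9M\}$, which are inverse-closed since $-1\in M$.
\end{itemize}
This ring fits none of (i)--(iv):
\begin{itemize}
\item It is not trivial, since it has dimension $3$.
\item It is not cyclotomic: $\mathrm{Aut}(C_{31})$ acts regularly on $G\setminus\{e\}$, so every orbit of a subgroup of $\mathrm{Aut}(C_{31})$ has the same size, while $|X|=20\neq 10=|Y|$.
\item $C_{31}$ has no nontrivial proper subgroups, so no genuine tensor or wedge decomposition as in (iii) or (iv) exists.
\end{itemize}
The multiplier $3$ sends $X$ to $3M\cup 9M$, which is not a union of basic sets. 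This is exactly the failure of Schur's theorem that blocks your Step 2.

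\textbf{What is true.} The correct statement, and all the paper actually needs (the rings arising via Lemma \ref{regul} come from orbit partitions), is the one for $K$-Schur rings $\mathcal{A}_{\mathbb{Z}}\otimes_{\mathbb{Z}}K$ whose partition is a classical Schur partition. For those, your characteristic-$0$ argument, applied to the partition (which does not depend on $K$), is the whole proof. It is also the only sense in which the paper's citation of Leung--Man is valid.
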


\begin{Prop}\label{Sym}

Let $K$ be a field and $G$ be a half-transitive permutation group on a set $\Omega$. Suppose that, for every $\alpha\in\Omega$, the sizes of $G_{\alpha}$-orbits on $\Omega$ are invertible in the field $K$. Then the endomorphism algebra ${\rm End}_{KG}(K\Omega)$ is a symmetric algebra.
\end{Prop}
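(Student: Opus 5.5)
Work in the coherent-algebra model given by Lemma \ref{Coherent}. Let $A=\End_{KG}(K\Omega)=\sum_{X\in S}KA_X\subseteq M_n(K)$, where $S$ is the set of $2$-orbits of $G$ on $\Omega\times\Omega$. The natural candidate form is the trace form $f(a,b)=\operatorname{tr}(ab)$ restricted to $A$. It is bilinear, and it is associative and symmetric because the matrix trace is. So conditions (i)--(iv) are automatic, and the whole proof reduces to showing non-degeneracy (v) under the hypothesis on subdegrees.

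To test non-degeneracy, compute $f$ on the basis $\{A_X\}$. For $X,Y\in S$, $\operatorname{tr}(A_XA_Y)=\sum_{i,j}(A_X)_{ij}(A_Y)_{ji}$ counts pairs $(i,j)\in X$ with $(j,i)\in Y$, that is, $(i,j)\in X\cap Y^*$. Since $S^*=S$ and the $2$-orbits partition $\Omega\times\Omega$, this is $|X|$ if $Y=X^*$ and $0$ otherwise. Hence the Gram matrix of $f$ in this basis is a monomial matrix: a permutation matrix (for $X\mapsto X^*$) scaled by the numbers $|X|\cdot 1_K$.

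It remains to show every $|X|$ is invertible in $K$. Let $\Omega(X)$ be the set of $\alpha$ with $\alpha X\neq\emptyset$. It is a single $G$-orbit, and $|X|=|\Omega(X)|\cdot|\alpha X|$. Here $|\alpha X|$ is the size of a $G_\alpha$-orbit, which is invertible by hypothesis.

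The main obstacle is the factor $|\Omega(X)|$, the length of a $G$-orbit on $\Omega$, since only subdegrees are assumed invertible. Half-transitivity is exactly what handles it. All $G$-orbits have a common size $m$, and the $G_\alpha$-orbit $\{\alpha\}$ has size $1$. Take the orbit $\Delta\ni\alpha$. Because $G_\alpha$ fixes $\alpha$ and $|\Delta|=|G:G_\alpha|=m$, the set $\Delta$ is itself a single $G$-orbit. So I must argue $m$ is invertible from the hypothesis.

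First try: the $G_\alpha$-orbits inside $\Delta$ partition $\Delta$, so $m=\sum(\text{sizes})$, but a sum of units need not be a unit; this step needs care. Better approach: replace the trace form by the weighted form
\[
f(a,b)=\sum_{\Delta\in\Omega/G}\frac{1}{|\Delta|}\operatorname{tr}(e_\Delta ab),
\]
where $e_\Delta$ is the diagonal idempotent of $\Delta$, which lies in $A$ and is central on each block, or more simply use $f(A_X,A_{X^*})=|\alpha X|$ defined directly on the basis. I will check associativity via the intersection-number identity $|t|C^{t^*}_{rs}=|r|C^{r^*}_{st}$ quoted from \cite{CP}. Dividing by the constant orbit size $m$ is legitimate only formally, at the level of integers: first define $f$ over $\mathbb{Z}$ as $\operatorname{tr}/m$, which is integral since each $|X|$ is divisible by $|\Omega(X)|=m$. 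Then reduce to $K$, where its Gram matrix has entries the invertible subdegrees $|\alpha X|$. Symmetry and associativity persist after reduction. This integral rescaling is the step I expect to need the most care.
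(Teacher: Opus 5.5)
Your final argument is correct. You define $f_{\mathbb{Z}}(a,b)=\operatorname{tr}(ab)/m$ on the integral coherent algebra, base-change to $K$, and read off a monomial Gram matrix whose entries are the subdegrees. The form itself is exactly the paper's. The paper sets $\varphi(A_{s_k},A_{s_l})=C^{1_{\Omega(s_k)}}_{s_ks_l}$, which equals $|\alpha s_k|$ if $s_l=s_k^*$ and $0$ otherwise, i.e.\ $\operatorname{tr}(A_{s_k}A_{s_l})/m$.

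The difference lies in how the axioms are verified. The paper gets associativity by identifying both sides with the coefficient of $A_{1_{\Omega(s_k)}}$ in $A_{s_k}A_{s_l}A_{s_r}$. It gets symmetry from $|t|C^{t^*}_{rs}=|r|C^{r^*}_{st}$ together with $|\Omega(s_k)|=|\Omega(s_l)|$. The cancellation of $|\Omega(s_k)|$ there is only valid as an identity in $\mathbb{Z}$ before reduction, since $m\cdot 1_K$ may vanish. You get associativity and symmetry for free from the matrix trace. You use half-transitivity only to produce one integer $m$ dividing every $|X|=|\Omega(X)|\,|\alpha X|$, and you make the integrality step explicit. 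This is slightly cleaner than the paper's bookkeeping with intersection numbers.

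In the write-up, discard the detours rather than patching them. The step ``I must argue $m$ is invertible'' is false in general. For a $2$-transitive group of degree $n$ with $p\mid n$, the subdegrees $1$ and $n-1$ are units, but $|1_\Omega|=n$ and $|\Omega^2\setminus 1_\Omega|=n(n-1)$ both vanish in $K$, so the unscaled trace form is identically zero on $A$.

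Likewise, $e_\Delta$ is not central in $A$ when $G$ is intransitive. If $X\subseteq\Delta\times\Delta'$ with $\Delta\neq\Delta'$, then $e_\Delta A_X=A_X$ but $A_Xe_\Delta=0$. Your weighted form gives $f(A_X,A_{X^*})=|X|/|\Delta|$ but $f(A_{X^*},A_X)=|X|/|\Delta'|$. So it is symmetric precisely because all orbit sizes agree, and at that point it is just $\operatorname{tr}/m$. The announced intersection-number check is then unnecessary.
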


{\bf Proof.} For convenience, we take $\Omega$ to be the set $\{1,2,\cdots,n\}$. Then $G\leq S_n$. Denote the orbits of the diagonal action of $G$ on $\Omega\times\Omega$ by $S = \{s_1,\cdots,s_t,s_{t + 1},\cdots,s_m\}$, where $\{s_1,\cdots,s_t\}$ are exactly the reflexive orbits. Then $\mathcal{C}=(G,S)$ is a coherent configuration and by Lemma \ref{Coherent} we have isomorphism of algebras $$\End_{KG}(K\Omega)\cong A_K(\mathcal{C})=\mathop  \oplus \limits_{i =1}^m KA_{s_i}.$$To complete the proof, we need to show that $A_K(\mathcal{C})$ is a symmetric algebra. We fix $\mathcal{B}=\{A_{s_i},1\leq i\leq m\}$ as a basis of $A_K(\mathcal{C})$. For $s_k\in S$, we conclude from \cite[Section 1.1]{CP} that $\Omega(s_k)$ is an $G$-orbit. For all $A_{s_k},A_{s_{l}}\in \mathcal{B}$, we define a mapping as follows:
$$\varphi: A_K(\mathcal{C})\times A_K(\mathcal{C})\rightarrow K,~(A_{s_k}, A_{s_{l}})\mapsto C^{1_{\Omega(s_k)}}_{s_k s_{l}}1_K.$$

According to \cite[Chapter 1]{CP}, $\varphi$ is clearly bilinear, and $\varphi(A_{s_k}, A_{s_{l}}A_{s_{r}})$ and $\varphi(A_{s_k}A_{s_l}, A_{s_r})$ are just the coefficient of $A_{1_{\Omega(s_k)}}$ in the expression of $A_{s_k} A_{s_l}A_{s_r}$ as a linear combination of elements in the basis $\mathcal{B}$. Thus $\varphi(A_{s_k}, A_{s_l}A_{s_r})$ and $\varphi(A_{s_k}A_{s_l}, A_{s_r})$ are equal, and therefore $\varphi$ is associative. Now it remains to show that $\varphi$ is symmetric and non-degenerate.

To show that $\varphi$ is symmetric, it suffices to show that $C^{1_{\Omega(s_k)}}_{s_k s_l}1_K=C^{1_{\Omega(s_l)}}_{s_l s_k}1_K$ for all $s_k,s_l\in S$. Since $G$ is half-transitive on $\Omega$, that is, all $G$-orbits have the same size, it follows from \cite[Chapter 1]{CP} that $$C^{1_{\Omega(s_k)}}_{s_k s_l}|\Omega(s_k)|1_K= C^{1_{\Omega(s_l)}}_{s_l s_k}|\Omega(s_l)|1_K,$$ whence we obtain the desired equality $C^{1_{\Omega(s_k)}}_{s_k s_l}1_K=C^{1_{\Omega(s_l)}}_{s_l s_k}1_K$.

Finally, we show that $\varphi$ is non-degenerate, that is, we need to show that $0$ is the only element in
$$\rad(\varphi):=\{x\in A_K(\mathcal{C})\mid \varphi(x,y)=0~ \mbox{for~all} ~y\in A_K(\mathcal{X})\}.$$
Let $x\in \rad(\varphi)$ be an arbitrary element. As $A_K(\mathcal{C})=\mathop  \oplus \limits_{i =1}^m KA_{s_i}$, we can write $x=\sum^m_{i=1} a_{s_i} A_{s_i}$, where $a_{s_i}\in K$ for $1\leq i\leq m$. For any $s_k, s_l\in S$, it is clear that $C^{1_{\Omega(s_k)}}_{s_k s_l}= 0$ if $s_k\neq s^{*}_l$. Therefore, we obtain
$0=\varphi(x, s^{*}_k)=a_{s_k} C^{1_{\Omega(s_k)}}_{s_k s^{*}_k}$ for $1\leq k\leq m$.
Let $(\alpha,\beta)\in s_k$. Then $C^{1_{\Omega(s_k)}}_{s_k s^{*}_k}=|\beta^{G_\alpha}|$,
where $\beta^{G_\alpha}=\{\beta\cdot x\mid x\in G_\alpha\}$. By our assumption on $G_\alpha$, then $|\beta^{G_\alpha}|$ is invertible in $K$. This implies that $a_{s_k}=0$ for $1\leq k\leq m$, whence $x=0$. This completes the proof. \qed

\medskip
As a consequence of Proposition \ref{Sym}, we have the following result.
\begin{Prop}\label{cop}
Let $G$ be a transitive permutation group acting on a set $\Omega$. Suppose that $G$ contains an abelian regular subgroup $T$, and all the subdegrees of $G$ are relatively prime to the order of $T$. Then $G$ is an $S$-permutation group.
\end{Prop}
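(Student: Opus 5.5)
Fix an arbitrary prime $p$ and a field $K$ of characteristic $p$. The task is to show that $\End_{KG}(K\Omega)$ is symmetric. The plan is to apply Proposition \ref{Sym} directly when $p$ does not divide any subdegree. When $p$ does divide some subdegree, the hypothesis forces $p\nmid |T|$, and we then use the abelian regular subgroup $T$ to build a symmetric form by hand through the Schur ring picture of Lemma \ref{regul}.

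\emph{Case 1: $p$ divides no subdegree of $G$.} A transitive group is trivially half-transitive, and every subdegree is invertible in $K$. So Proposition \ref{Sym} applies and gives the result.

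\emph{Case 2: $p$ divides some subdegree $m$.} Since $\gcd(m,|T|)=1$, we get $p\nmid |T|=n$. By Lemma \ref{regul}, $\End_{KG}(K\Omega)$ is isomorphic to the Schur ring $\mathcal{A}\subseteq KT$ whose basic sets are the $G_\alpha$-orbits on $T$. Because $T$ is abelian, $KT$ is commutative, so $\mathcal{A}$ is a commutative subalgebra of $KT$. Since all commutative self-injective algebras are symmetric, it suffices to show that $\mathcal{A}$ is self-injective, or directly that it is Frobenius.

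For this, $p\nmid |T|$ gives that $KT$ is semisimple by Maschke's theorem, and being commutative, it is a product of fields after a finite extension. A subalgebra of a commutative semisimple algebra is reduced, and a reduced finite-dimensional commutative algebra is a product of fields, hence semisimple. Semisimple algebras are symmetric, so this gives the result.

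Alternatively, one can avoid field extensions. Use the form $f(x,y)=$ the coefficient of $e$ in $xy$, which is symmetric since $T$ is abelian and associative as in $KT$. For a basic set $X$ one has $f(\underline{X},\underline{X^{-1}})=|X|$, and this is only nondegenerate when $p\nmid |X|$, which fails here. So the semisimplicity route is the one to follow.

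The main step to check carefully is that $\mathcal{A}$ is reduced. A nilpotent element of $\mathcal{A}$ is a nilpotent element of $KT$, and $KT$ has none because it is commutative and semisimple (Maschke). So $\mathcal{A}$ is a commutative algebra with zero nilradical, hence semisimple, hence symmetric. Since $p$ was arbitrary, $G$ is an $S$-permutation group.
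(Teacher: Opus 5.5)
Your proposal is correct and follows essentially the same route as the paper. Proposition \ref{Sym} handles the primes dividing no subdegree; otherwise $p\nmid |T|$, so the Schur ring of Lemma \ref{regul} sits inside the commutative semisimple algebra $KT$, is therefore reduced, hence semisimple and thus symmetric.
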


{\bf Proof.} Let $K$ be an arbitrary field of characteristic $p$. We shall show that $\End_{KG}(K\Omega)$ is a symmetric algebra. By Proposition \ref{Sym}, we need only consider the case where $p$ divides some subdegree of $G$.

Suppose that $p$ divides some subdegree of $G$. By our assumption, $\gcd(p,|T|)=1$. Lemma \ref{regul} implies the existence of a $K$-Schur ring $\mathcal{A}$ over $T$ such that $\End_{KG}(K\Omega)\cong \mathcal{A}$ as algebras. Since $p$ does not divide $|T|$, Maschke's theorem implies that $KT$ is semisimple. In particular, $KT$ has no non-zero nilpotent ideal, and hence no non-zero nilpotent central element. Because $T$ is abelian, the group algebra $KT$ is commutative; consequently, every element of $KT$ is central. Thus $KT$ contains no non-zero nilpotent elements. It follows that the subalgebra $\mathcal{A}$ of $KT$ also has no non-zero nilpotent elements, and therefore $\mathcal{A}$ is semisimple. Hence $\End_{KG}(K\Omega)\cong \mathcal{A}$ is a semisimple algebra, and thus a symmetric algebra, as required.
  \qed

\begin{Prop}\label{rank}
Let $K$ be a field and $G$ a permutation group on a set $\Omega$. If $G$ has a transitive subgroup $H$ such that $\rank(H)=\rank(G)$, then $\End_{KG}(K\Omega)\cong \End_{KH}(K\Omega)$ as algebras.
\end{Prop}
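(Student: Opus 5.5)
The plan is to compare the two endomorphism algebras inside a common ambient algebra and show they coincide. Since $H\leq G$, every $KG$-endomorphism of $K\Omega$ is in particular a $KH$-endomorphism. Hence $\End_{KG}(K\Omega)\subseteq \End_{KH}(K\Omega)$ as a $K$-subalgebra of $\End_K(K\Omega)\cong M_n(K)$. It therefore suffices to show that the two subalgebras have the same $K$-dimension. The inclusion is then an equality, and in particular an algebra isomorphism.

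To compute the dimensions, I would use Lemma \ref{Coherent}. It gives $\End_{KG}(K\Omega)\cong A_K(\mathcal{C}_G)$, where the basic relations are the orbits of $G$ on $\Omega\times\Omega$; similarly $\End_{KH}(K\Omega)\cong A_K(\mathcal{C}_H)$. Concretely, in the matrix realization, $\End_{KG}(K\Omega)=\sum_{X}KA_X$, where $X$ runs over the $2$-orbits of $G$. This is the space of matrices constant on $G$-orbits of $\Omega\times\Omega$, which is exactly the commutant of the permutation matrices of $G$. Its dimension is therefore the number of orbits of $G$ on $\Omega\times\Omega$. The same holds for $H$.

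It remains to note that, for a transitive group, the number of orbits on $\Omega\times\Omega$ equals the number of orbits of a point stabilizer on $\Omega$, which is the rank. Indeed, the map $(\alpha,\beta)^G\mapsto \beta^{G_\alpha}$ for a fixed $\alpha$ is a bijection, by transitivity. Since both $G$ and $H$ are transitive, $\rank(H)=\rank(G)$ gives equal dimensions, and we conclude.

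Alternatively, one can argue directly without dimensions. Each $H$-orbit on $\Omega\times\Omega$ lies in a $G$-orbit. Since $H\leq G$, every $G$-orbit is a union of $H$-orbits, so equal counts force the two partitions to be identical, and the basis matrices $A_X$ agree. There is no real obstacle here. The only point requiring care is that the identification in Lemma \ref{Coherent} is compatible for $G$ and $H$, i.e. both algebras sit inside the same $M_n(K)$; this is clear from the concrete matrix description.
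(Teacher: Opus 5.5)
Your proof is correct and follows essentially the same route as the paper. Both arguments place the two endomorphism algebras inside the same $M_n(K)$ via the $2$-orbit matrices, note that every $G$-orbit on $\Omega\times\Omega$ is a union of $H$-orbits (so $\End_{KG}(K\Omega)\subseteq\End_{KH}(K\Omega)$), and use $\rank(H)=\rank(G)$ to conclude the two coincide. Your version is slightly more careful than the paper's, since you justify that the number of $2$-orbits equals the rank for transitive groups and that the two identifications are compatible.
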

{\bf Proof.}
Let $n$ denote the cardinality of $\Omega$, and let $S_1$ (respectively, $S_2$) be the orbits of the diagonal action of $G$ (respectively, $H$) on the Cartesian product $\Omega\times \Omega$. Then $\mathcal{C}_1:=(\Omega, S_1)$ and $\mathcal{C}_2:=(\Omega, S_2)$ are coherent configurations, and by Lemma \ref{Coherent} we have $\End_{KG}(K\Omega) \cong A_K(\mathcal{C}_1)$ and $\End_{KH}(K\Omega) \cong A_K(\mathcal{C}_2)$ as algebras. Since $H \leq G$, we derive that any orbit in $S_1$ is a union of orbits in $S_2$. Note that $|S_1| = \operatorname{rank}(G)$ and $|S_2| = \operatorname{rank}(H)$. Thus it follows from the assumption $\operatorname{rank}(H) = \operatorname{rank}(G)$ that $|S_1|=|S_2|$.
This, together with the inclusion $A_K(\mathcal{C}_1)\subseteq A_K(\mathcal{C}_2)$ implies $A_K(\mathcal{C}_1)= A_K(\mathcal{C}_2)$. Consequently,
$
\End_{KG}(K\Omega) \cong A_K(\mathcal{C}_1)= A_K(\mathcal{C}_2) \cong \End_{KH}(K\Omega)
$.

 \qed

\begin{Prop}\label{subdegree}
Let $G$ be a $\frac{3}{2}$-transitive permutation group acting on the set $\Omega$, and let $H:=G_\alpha$ for some $\alpha\in \Omega$. Then, for any prime number $p$ that divides the non-trivial subdegrees of $G$, the normalizer of a Sylow $p$-subgroup of $H$ is contained in $H$.
\end{Prop}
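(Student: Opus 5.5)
The plan is to show that a Sylow $p$-subgroup $P$ of $H=G_\alpha$ fixes no point of $\Omega$ other than $\alpha$. Once this is known, $N_G(P)$ must fix $\alpha$, because it permutes the fixed-point set of $P$. Hence $N_G(P)\leq G_\alpha=H$.

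\textbf{Step 1 (all nontrivial subdegrees are divisible by $p$).} Since $G$ is $\frac{3}{2}$-transitive, $H=G_\alpha$ is half-transitive on $\Omega\setminus\{\alpha\}$. So every $H$-orbit on $\Omega\setminus\{\alpha\}$ has the same size $m$, the common nontrivial subdegree. By hypothesis $p\mid m$. Consequently, for every $\beta\in\Omega\setminus\{\alpha\}$ we have $|H:H_\beta|=|\beta^{H}|=m$, which is divisible by $p$. Here $H_\beta=G_\alpha\cap G_\beta$.

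\textbf{Step 2 ($\alpha$ is the only fixed point of $P$).} Suppose $\beta\neq\alpha$ is fixed by $P$. Then $P\leq H_\beta\leq H$. Since $P$ is a Sylow $p$-subgroup of $H$, it is also a Sylow $p$-subgroup of $H_\beta$, so $|H:H_\beta|=|H|/|H_\beta|$ is coprime to $p$. This contradicts Step 1. Hence ${\rm Fix}_\Omega(P)=\{\alpha\}$.

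\textbf{Step 3 (conclusion).} Let $g\in N_G(P)$. For $x\in P$ we have $\alpha\cdot g\cdot (g^{-1}xg)=\alpha\cdot x g=\alpha\cdot g$. Since $g^{-1}Pg=P$, the point $\alpha\cdot g$ is fixed by every element of $P$. By Step 2, $\alpha\cdot g=\alpha$, so $g\in H$. Therefore $N_G(P)\leq H$.

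The only delicate point is Step 1. It relies on half-transitivity forcing all nontrivial subdegrees to be equal, so that $p$ dividing one nontrivial subdegree means it divides all of them. The remaining steps are a standard Sylow and fixed-point argument.
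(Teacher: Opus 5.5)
Your proof is correct and is essentially the paper's argument. The paper takes $x\in N_G(P)\setminus H$ and notes $P\le H\cap H^x$, the stabilizer in $H$ of $\alpha\cdot x\neq\alpha$, so the nontrivial subdegree $|H:H\cap H^x|$ is prime to $p$, a contradiction. This is exactly your Steps 2 and 3, phrased via the fixed point $\alpha\cdot x$ of $P$ rather than via fixed-point sets.
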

{\bf Proof.} Since $G$ is $\frac{3}{2}$-transitive, all of the non-trivial subdegrees of $G$ are identical. Let $p$ be a prime dividing the non-trivial subdegrees of $G$ and let $P$ be a Sylow $p$-subgroup of $H$. If $N_G(P)$ is not contained in $H$, then there exists an element $x\in N_G(P)\setminus H$ such that $P\leq H\cap H^x$. Consequently, $p$ does not divide $|H:H\cap H^x|$. However, $|H:H\cap H^x|=|\{\alpha\cdot xh\mid h\in H\}|$ is a subdegree of $G$. This is a contradiction to our assumption. \qed
\vskip 2mm
Combining Lemmas \ref{Fro} and \ref{C-H}, we can prove the following.
\begin{Prop}\label{He-F}
Let $G$ be a finite group with a split $BN$-pair at characteristic $q$. Then $\mathcal{H}_K(G,B)$ is a Frobenius algebra over any field $K$.
\end{Prop}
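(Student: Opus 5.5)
Let $K$ be an arbitrary field. We split according to whether the characteristic of $K$ is the defining characteristic $q$ or not. The case where $q\cdot 1_K$ is invertible includes characteristic $0$ and every characteristic $p\neq q$.

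\textbf{Case $q\cdot 1_K$ invertible.} Lemma \ref{He} gives an algebra isomorphism $\mathcal{H}_K(G,B)\cong K\mathcal{W}_G$. A group algebra of a finite group is symmetric via the form $f(a,b)=$ coefficient of $1$ in $ab$. In particular it is Frobenius, and this case is done.

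\textbf{Case $\mathrm{char}\,K=q$.} Here we use Lemma \ref{Fro}: $E:=\End_{KG}(K(G/U))$ is Frobenius. By $BN.5$ we have $B=U\rtimes H$ with $H$ an abelian $q'$-group. Hence $e:=|H|^{-1}\underline{H}$ is a well-defined idempotent of $KB$, and we proceed in three steps.

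\begin{enumerate}
\item[(a)] Since $U\trianglelefteq B$, the element $\underline{B}=\underline{U}\,\underline{H}$ is a multiple of $\underline{U}e$, and therefore $K(G/B)\cong\underline{B}KG$ is a direct summand of $K(G/U)\cong \underline{U}KG$. More intrinsically, $H$ acts on $K(G/U)$ on the left by $Ug\mapsto Uhg$, which is well defined because $H$ normalizes $U$. These maps are $KG$-endomorphisms, so $\varepsilon:=|H|^{-1}\sum_{h\in H}\lambda_h$ is an idempotent of $E$ whose image is $K(G/B)$.
\item[(b)] It follows that $\mathcal{H}_K(G,B)\cong \End_{KG}(K(G/B))\cong \varepsilon E\varepsilon$.
\item[(c)] Conclude that $\varepsilon E\varepsilon$ is Frobenius.
\end{enumerate}

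Step (c) is the main obstacle. For symmetric algebras, corner algebras $eAe$ are always symmetric, as the paper notes. For Frobenius algebras this fails in general: $\varepsilon E\varepsilon$ is Frobenius exactly when the Nakayama permutation preserves the set of isoclasses of projective summands of $\varepsilon E$. The plan is to verify this directly using the explicit Frobenius form.

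Tinberg's form on $E$ (equivalently on $\mathcal{H}_K(G,U)$ with double coset basis $UnU$, $n\in N$) is the coefficient of the identity double coset $U$. The corresponding Nakayama automorphism is induced by conjugation by the longest element $w_0$, possibly twisted by characters of $H$. Under this automorphism the subgroup $H$ is mapped to itself, since $w_0$ normalizes $H=B\cap N$. Consequently the idempotent $\varepsilon$, being the average over $H$, is sent to an idempotent conjugate to (in fact equal to) $\varepsilon$, so $\nu(\varepsilon E)\cong \varepsilon E$.

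By the standard criterion, if the Nakayama automorphism fixes $\varepsilon$ up to conjugacy, then the restriction of the Frobenius form to $\varepsilon E\varepsilon$ is nondegenerate. Concretely, if $\varepsilon x\varepsilon$ pairs to zero with $\varepsilon E\varepsilon$, then it pairs to zero with all of $E$, because $f(\varepsilon x\varepsilon,y)=f(\varepsilon x\varepsilon,\nu(\varepsilon)y\varepsilon)$.

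As a fallback, one could work directly in $\mathcal{H}_K(G,B)$ with basis $T_w=BwB$, $w\in\mathcal{W}_G$. Take the form $\tau(T_w)=\delta_{w,1}$ and check non-degeneracy by showing $\tau(T_wT_{w^{-1}})=[B:B\cap B^{w}]\cdot(\ldots)$. The index $[B:B\cap B^w]$ is a power of $q$ and vanishes in $K$, so this needs care. The better candidate is $\tau(T_x)=\delta_{x,w_0}$, for which $T_wT_{w^{-1}w_0}=T_{w_0}$ by the length-additive multiplication rule from $BN.4$. Then the Gram matrix is unitriangular with respect to the Bruhat order, which gives non-degeneracy; associativity holds by construction.

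I would present this fallback as the actual argument in characteristic $q$, since it avoids the corner-algebra subtlety.
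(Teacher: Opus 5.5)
Your final argument (the fallback) is correct, but it takes a genuinely different route from the paper's.

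\textbf{The paper's route.} The paper keeps your first plan and settles step (c) in one line. Your $\varepsilon=|H|^{-1}\sum_{h\in H}\lambda_h$ is exactly $e_0=|H|^{-1}\sum_{h\in H}UhU$, and it is \emph{central} in $E\cong\mathcal{H}_K(G,U)$. Indeed, in the basis $T_n=UnU$ ($n\in N$) one has $\{T_hT_n,T_nT_h\}=\{T_{hn},T_{nh}\}$ because $UhU=Uh=hU$. Moreover $\sum_hT_{hn}=\sum_hT_{nh}$ because $H\unlhd N$. Hence $\mathcal{H}_K(G,B)\cong e_0Ee_0=e_0E$ is a direct factor of Tinberg's Frobenius algebra (Lemma~\ref{Fro}). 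So the Nakayama-permutation obstacle you worried about never arises.

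\textbf{Your route.} You work intrinsically in $\mathcal{H}_K(G,B)$. Since $q_s=[BsB:B]$ divides $[B:H]=|U|$ and exceeds $1$, in characteristic $q$ the Iwahori--Matsumoto relations become those of the $0$-Hecke algebra, $T_s^2=-T_s$. Then $\tau(T_x)=\delta_{x,w_0}$ is an explicit Frobenius form. The paper's proof is shorter but rests on Tinberg's theorem and Cabanes' centrality remark. Yours is self-contained, and it also exhibits the Nakayama automorphism $T_w\mapsto T_{w_0ww_0}$. That automorphism is what decides symmetry in Proposition~\ref{S-per}(6) and in the $\mathrm{GL}(3,q)$ example of Section~\ref{exam}.

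\textbf{Points to tighten.}

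First, you only check the entries $\tau(T_wT_{w^{-1}w_0})=1$; the vanishing of the other entries is the real content. Argue by induction on $\ell(u)$ using $T_sT_z\in\{T_{sz},-T_z\}$. This shows $T_uT_v$ is a combination of $T_z$ with $\ell(z)\le\ell(u)+\ell(v)$. Its component of length exactly $\ell(u)+\ell(v)$ is $T_{uv}$ if $\ell(uv)=\ell(u)+\ell(v)$, and $0$ otherwise. Hence $\tau(T_uT_{u'^{-1}w_0})$ is $0$ for $\ell(u)<\ell(u')$ and equals $\delta_{u,u'}$ for $\ell(u)=\ell(u')$. So the Gram matrix is block unitriangular with respect to length.

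Second, drop the Nakayama sketch for $E$. The coefficient of the identity double coset $U$ is not a Frobenius form on $E$ in characteristic $q$; it is degenerate for exactly the reason you give for $B$.

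Third, the length argument above only uses $T_s^2=q_sT_1+(q_s-1)T_s$. So $\tau$ is a Frobenius form on $\mathcal{H}_K(G,B)$ for \emph{every} field $K$, and you can drop the case split. This also lets you drop Lemma~\ref{He}, which both you and the paper use when $q\cdot1_K\neq0$. That is worth doing, because the lemma fails when $\mathcal{H}_K(G,B)$ is not semisimple. For $G=\mathrm{SL}_2(2)$ acting on three points with $\mathrm{char}\,K=3$, Proposition~\ref{2-tr} gives $\mathcal{H}_K(G,B)\cong K[X]/(X^2)$, whereas $K\mathcal{W}_G\cong K\times K$.
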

\demo Let $p$ be the characteristic of $K$. Suppose $p\neq q$. By Lemma \ref{Fro}, $\mathcal{H}_K(G,B)\cong KW_G$ is a symmetric algebra, as group algebras over arbitrary fields are always symmetric algebras. Therefore, $\mathcal{H}_K(G,B)$ is a Frobenius algebra as symmetric algebras are Frobenius algebras. Thus in what follows we may assume that $p=q$.

Note that $B$ has an abelian $p'$-subgroup, say $H$, such that $B=U\rtimes H$. Let $x=\sum_{h\in H} UhU\in \mathcal{H}_{\mathbb{Z}}(G,B)$ be an element. As $\mathcal{H}_{\mathbb{Z}}(G,B)$ is a free $\mathbb{Z}$-module with $(U,U)$-double cosets as a basis, it follows that tensoring $\mathcal{H}_{\mathbb{Z}}(G,B)\otimes_{\mathbb{Z}}-$ with the following exact sequence $$0\ra \mathbb{Z}\ra \mathbb{Q}\ra \mathbb{Q}/\mathbb{Z}\ra 0$$remains an exact sequence. Thus $x$ may also be viewed as an element in $\mathcal{H}_{\mathbb{Q}}(G,B)$. In \cite[Remark 6]{Ca}, it was shown that $\frac{1_{\mathbb{Q}}}{|H|}\cdot x$ is a central idempotent in $\mathcal{H}_{\mathbb{Q}}(G,B)$. This together with the exactness of $\mathcal{H}_{\mathbb{Z}}(G,B)\otimes_{\mathbb{Z}}-$ implies that $x$ is central in $\mathcal{H}_{\mathbb{Z}}(G,B)$. Since $p=q$ and $H$ is an $p'$-group, it follows that $|H|$ is invertible in $K$. Thus $x\otimes_{\mathbb{Z}}\frac{1_K}{|H|}$ is a central idempotent in $\mathcal{H}_{\mathbb{Z}}(G,B)\otimes_{\mathbb{Z}} K=\mathcal{H}_K(G,B)$. Let $e=\frac{1_K}{|H|}\sum_{h\in H}h$. Then by \cite[Remark 9]{Ca} Cabanes pointed out that $K(G/B)\cong eK(G/U)$ is a direct summand of $K(G/U)$ as $KG$-module. Thus $\mathcal{H}_K(G,B)\cong e_0\mathcal{H}_K(G,U)e_0$ as algebras, where $e_0=\frac{1_K}{|H|}\cdot x$. Since $e_0$ is a central idempotent in $\mathcal{H}_K(G,U)$, it follows that $\mathcal{H}_K(G,B)$ is isomorphic to a sum of blocks of $\mathcal{H}_K(G,U)$. This together with  $\mathcal{H}_K(G,U)$ being a Frobenius algebra implies that $\mathcal{H}_K(G,B)$ is also a Frobenius algebra. \qed
\medskip

To end with this subsection, we show that $2$-transitive permutation groups and Frobenius permutation groups are $S$-permutation groups.

\begin{Prop}\label{2-tr}
Any $2$-transitive group is an $S$-permutation group.
\end{Prop}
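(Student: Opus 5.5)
A $2$-transitive group $G$ on $\Omega$ has rank $2$, so its Hecke algebra has a basis of two elements and can be handled directly. I will show that $\End_{KG}(K\Omega)$ is symmetric for every field $K$, with no dependence on the characteristic. First, since $G$ is transitive of rank $2$, the orbits of $G$ on $\Omega\times\Omega$ are exactly $1_\Omega$ and $s:=\Omega\times\Omega\setminus 1_\Omega$. By Lemma \ref{Coherent}, $\End_{KG}(K\Omega)\cong A_K(\mathcal{C})=KI\oplus KA_s$. Here $I$ is the identity matrix and $A_s=J-I$, where $J$ is the all-ones $n\times n$ matrix.

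Next I will compute the multiplication. From $J^2=nJ$ we get
$$A_s^2=(J-I)^2=(n-2)J+I=(n-2)A_s+(n-1)I.$$
So the algebra is generated by one element $A_s$ satisfying a quadratic relation. Hence it is commutative and isomorphic to $K[x]/(x^2-(n-2)x-(n-1))$. This factors as $K[x]/((x+1)(x-(n-1)))$.

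Finally I will deduce symmetry. Every algebra of the form $K[x]/(f)$ is Frobenius: the linear functional that picks out the coefficient of the top-degree basis element $x^{\deg f-1}$ gives a nondegenerate associative form. A commutative Frobenius algebra is self-injective, so by the fact quoted from \cite{SY} it is symmetric. One can also avoid citing \cite{SY} and check directly. Define $f(a,b)$ to be the coefficient of $A_s$ in $ab$. This form is associative, and it is symmetric because the algebra is commutative. Its Gram matrix in the basis $\{I,A_s\}$ is $\begin{pmatrix}0&1\\1&n-2\end{pmatrix}$, whose determinant is $-1\neq 0$ in every field. So the form is nondegenerate in every characteristic.

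There is no real obstacle. The only point needing care is the rank-$2$ identification: $2$-transitivity means $G_\alpha$ is transitive on $\Omega\setminus\{\alpha\}$, so there are exactly two $2$-orbits. The rest is a computation in a two-dimensional commutative algebra. Since $K$ was an arbitrary field, $G$ is a $p$-$S$-permutation group for every prime $p$, hence an $S$-permutation group.
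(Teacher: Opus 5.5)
Your proof is correct and follows the paper's route: $2$-transitivity gives exactly two $2$-orbits, Lemma \ref{Coherent} identifies the Hecke algebra with the two-dimensional algebra $KI\oplus KA_s$, and a direct computation shows it is symmetric. The only difference is the last step. The paper splits into cases and identifies the algebra as $K[X]/(X^2)$ when $p\mid n$ and as $K\times K$ when $p\nmid n$. Your explicit form, with Gram determinant $-1$, handles every characteristic at once without the case split.
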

{\bf Proof.}
 Let $ K $ be an arbitrary field of characteristic $p$, and let $G$ be a 2-transitive permutation group on a finite set $ \Omega $ of size $n $. Then the set of orbits $S$ for the diagonal action of $G$ on the Cartesian product $\Omega\times \Omega$ consists of two orbits, namely, $1_\Omega$ and $\Omega\times \Omega\setminus 1_\Omega$. To prove that $G$ is an $S$-permutation group, we need to show that $\End_{KG}(K\Omega) \cong A_K((\Omega, S))$ is always a symmetric algebra. Clearly, $A_K((\Omega, S)) = KI_n \oplus KJ ,$ where $J = \sum_{1 \leq i \neq j \leq n} e_{i,j} $ is the matrix with zeros on the diagonal and ones off the diagonal. Consequently,
\[
\operatorname{End}_{KG}(K\Omega) \cong
\begin{cases}
K[X]/(X^2), & \text{if } p \mid n, \\
K \times K, & \text{if } p \nmid n.
\end{cases}
\]
In both cases, $\operatorname{End}_{KG}(K\Omega) $ is a symmetric algebra. This completes the proof.
\qed

\medskip

Note that Frobenius permutation groups are necessarily $\frac{3}{2}$-transitive permutation groups.
\begin{Prop}\label{Frobenius}
Any Frobenius permutation group is an $S$-permutation group.
\end{Prop}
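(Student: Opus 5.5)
A Frobenius permutation group $G$ on $\Omega$, $|\Omega|=n$, has a Frobenius kernel $N$ acting regularly on $\Omega$ and a point stabilizer $H=G_\alpha$ (a Frobenius complement) acting semiregularly on $\Omega\setminus\{\alpha\}$. So every non-trivial subdegree equals $|H|$, and $G=N\rtimes H$ with $\gcd(|N|,|H|)=|N|\cdot$-coprime; that is, $\gcd(n,|H|)=1$ (a standard fact: $|H|$ divides $|N|-1$). I would split according to the characteristic $p$ of an arbitrary field $K$ and in each case invoke one of the earlier criteria.

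\emph{Case $p\nmid |H|$.} All subdegrees ($1$ and $|H|$) are invertible in $K$, and $G$ is transitive, hence half-transitive. Proposition \ref{Sym} then gives directly that $\End_{KG}(K\Omega)$ is symmetric.

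\emph{Case $p\mid |H|$.} Then $p\nmid n=|N|$. By Lemma \ref{regul}, $\End_{KG}(K\Omega)$ is isomorphic to the Schur ring $\mathcal{A}$ over $N$ whose basic sets are the $H$-orbits (under conjugation) on $N$. Here $N$ need not be abelian (it is nilpotent by Thompson), so Proposition \ref{cop} does not apply verbatim, and this is where I expect the only real work. I would argue directly on semisimplicity. Since $p\nmid |N|$, Maschke gives that $KN$ is semisimple. Because $H$ acts on $N$ by conjugation, $\mathcal{A}$ is the fixed-point subalgebra $(KN)^H$: the $H$-orbit sums span exactly the $H$-invariant elements.

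It then suffices to show that a fixed-point subalgebra $A^H$ of a semisimple algebra $A$ is semisimple whenever $A^H$ is spanned by orbit sums and carries a compatible nondegenerate form. A cleaner route is to use the symmetrizing form of $KN$, namely $t(x)=$ coefficient of $e$, restricted to $\mathcal{A}$. For basic sets $X,Y$ one has $t(\underline{X}\,\underline{Y})=|X|$ if $Y=X^{-1}$ and $0$ otherwise. This form is symmetric and associative on $\mathcal{A}$, and its Gram matrix in the basis of orbit sums is a permutation pattern with entries $|X|$. Each $|X|$ is either $1$ (for $X=\{e\}$) or $|H|$ (nontrivial orbits are regular because the action is fixed-point-free). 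Since $p\mid|H|$, those entries vanish, so this form degenerates and the direct route fails.

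Instead I would use a $p$-independent argument. Any nilpotent ideal $I$ of $\mathcal{A}$ satisfies $t(x x^{*})$-type vanishing, so consider the Reynolds-free approach: every $a\in\mathcal{A}$ is a class-function-like element. Since $p\nmid|N|$, $KN\otimes\bar K$ splits, and $\mathcal{A}\otimes\bar K$ acts on each simple $\bar KN$-module. Applying Green/Clifford, $(KN)^H=\bigoplus_i (\End(V_i))^{H}$-type components are endomorphism rings of $H$-stable pieces, hence semisimple modulo a twisted-group-algebra part.

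Since that is getting heavy, the fallback I would actually commit to is the following. Frobenius groups are $\frac32$-transitive, and $\mathcal{A}$ can be written as the endomorphism algebra $\End_{KG}(K\Omega)$ with $K\Omega\cong \mathrm{Ind}_N^G$-related decomposition. Because $p\nmid|G:H|=n$, $K\Omega=K_G\oplus M$, and the Hecke algebra is $e\,KG\,e$-like with $e=\underline{H}/|H|$ when $p\nmid|H|$. So in the hard case I would instead show that $\mathcal{A}$ is semisimple by showing that the radical $J$ of $\mathcal{A}$ lies in the radical of $KN$ intersected with $\mathcal{A}$. That holds because $\mathcal{A}\subseteq KN$ is a subalgebra with $KN$ a separable $\mathcal{A}$-module: $KN$ is projective as a module over $(KN)^H$ when $p\nmid|N|$. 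This is the step I flag as the main obstacle, and I would first check whether the authors' intended argument simply reduces to $N$ abelian via Proposition \ref{rank}, replacing $G$ by a rank-preserving transitive subgroup.
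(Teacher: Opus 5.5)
Your case $p\nmid|H|$ via Proposition \ref{Sym} is fine and matches how the paper begins. In the case $p\mid|H|$, two things you do are correct: the reduction to $\mathcal{A}=(KN)^H$ (Lemma \ref{regul}, with $p\nmid n=|N|$), and your observation that the coefficient-of-identity form on $\mathcal{A}$ degenerates, since its Gram entries on nontrivial basic sets are $|H|\cdot 1_K=0$. \emph{But the case $p\mid|H|$ is never actually proved.}

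The Clifford-theoretic sketch is dropped before it uses anything specific to Frobenius groups. The argument you commit to instead is invalid: for a subalgebra $B\subseteq A$, projectivity of $A$ as a $B$-module does not force $J(B)\subseteq J(A)$. For example, $A=M_2(K)$ is projective on both sides over its upper triangular subalgebra $B$, yet $J(B)\neq 0=J(A)$. You also assert without reason that $KN$ is projective over $(KN)^H$. Results of that kind normally need $|H|$ invertible in $K$, which is exactly what fails here. In this situation it does hold, but only because $(KN)^H$ turns out to be semisimple, which is what you are trying to prove.

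The escape via Proposition \ref{rank} is unavailable too. Suppose $L\leq G$ is transitive of the same rank. Then $L_\alpha\leq H$ must be transitive on each nontrivial suborbit, and these have size $|H|$. So $L_\alpha=H$ and $L=G$: a Frobenius group has no proper transitive subgroup of the same rank.

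What is missing is the Frobenius property in the modular case. The paper uses it through vertices. Since $p\nmid n$, a Sylow $p$-subgroup $P$ of $H$ is Sylow in $G$, and $H\cap H^x=1$ for $x\notin H$. Green correspondence then shows $K\Omega\cong K_G\oplus M$ with $M$ projective. (Alternatively, Mackey gives $(K\Omega)_P\cong K_P\oplus(\text{free})$, and Krull--Schmidt finishes.) Hence $\End_{KG}(K\Omega)\cong K\times\End_{KG}(M)$, and $\End_{KG}(M)$ is Morita equivalent to some $eKGe$, so it is symmetric.

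Your Clifford route also closes once you use that in a Frobenius group the inertia group of every nontrivial $\theta\in\mathrm{Irr}(N)$ is $N$. Extend scalars to a splitting field $F$, which is harmless since $p\nmid|N|$. Then $FN\cong\prod_\theta M_{\theta(1)}(F)$, and $H$ permutes the nontrivial factors in regular orbits. So $(FN)^H\cong F\times\prod_{H\text{-orbits}}M_{\theta(1)}(F)$ is semisimple. Semisimplicity descends to $\mathcal{A}$, and finite-dimensional semisimple algebras are symmetric. This route even shows the Hecke algebra is semisimple whenever $p\mid|H|$.
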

{\bf Proof.} Let $K$ be an arbitrary field of characteristic $p$ and $G$ a Frobenius permutation group on a set $\Omega$ with $H = G_{\alpha}$ the point stabilizer of some $\alpha\in \Omega$ in $G$. Then $H$ is a Frobenius complement in $G$. To show that $G$ is an $S$-permutation group, we need to show that $\End_{KG}(K\Omega)$ is always a symmetric algebra. By Lemma \ref{Sym}, we may assume that $p$ divides some subdegree of $G$. In particular, $p\mid |H|$. Since $G$ is a Frobenius group, by the definition of Frobenius groups, we see that $|H|$ and $|\Omega|$ are relatively prime. Thus, $p\nmid |\Omega|$.

Let $P$ be a Sylow $p$-subgroup of $H$. Then, by properties of Frobenius complements, we have $N_G(P) \leq H$. Since $K\Omega$ is isomorphic to $K^G_H$ as $KG$-modules and $P$ is a vertex of $K_H$, the trivial module $K_G$ is the Green correspondence of $K_H$, and the vertex of any indecomposable summand of $K^G_H$ other than $K_G$ is of the form $P\cap P^g$ for some $g\in G \setminus H$. However, on the other hand, since $G$ is a Frobenius group, $H\cap H^x = 1$ for all $x\in G \setminus H$. Thus, the vertices of all indecomposable summands of $K^G_H$, except for $K_G$, are the identity subgroup. This indicates that $K^G_H$ is a direct sum of $K_G$ and a projective module. Thus $\End_{KG}(K\Omega)\cong K\times \End_{KG}(M)$. Let $N$ be the sum of non-isomorphic indecomposable direct summands of $M$. Here, $N$ is referred to as the basic module of $M$. Then, according to the well-known Morita theorem, $\End_{KG}(M)$ and $\End_{KG}(N)$ are Morita equivalent. By the definition of $N$, we can find an idempotent $e$ of $KG$ such that $N$ is isomorphic to $eKG$ as $KG$-modules. Consequently, $\End_{KG}(N)$ is isomorphic to $eKGe$, which is known to be a symmetric algebra. We conclude that $\End_{KG}(M)$ is a symmetric algebra as it is Morita equivalent to $\End_{KG}(N)$. Hence, $\End_{KG}(K\Omega)\cong K\times \End_{KG}(M)$ is also a symmetric algebra. \qed
\section{Proof of Theorem \ref{main} \label{pf}}

The proof of Theorem \ref{main} is divided into two parts: one part deals with $p$-$S$-permutation groups and another part deals with $S$-permutation group.
\subsection{Criterion for $p$-$S$-permutation groups}

Recall that a transitive permutation group always has $1$ as a subdegree. Other subdegrees (may also be $1$) are called non-trivial subdegrees. In this subsection, we mainly prove the following result.

\begin{Prop}\label{p-S}
Let $G$ be a transitive permutation group on a set $\Omega$ of cardinality $n$. Then $G$ is a $p$-$S$-permutation group for a prime $p$ if one of the following is satisfied

$(i)$ all subdegrees of $G$ are coprime to $p$;

$(ii)$ $p\nmid \frac{|G|}{nm}$ for any non-trivial subdegree $m$ of $G$;

$(iii)$ $G$ has an abelian regular $p'$-subgroup;

$(iv)$ $n<2p$;

$(v)$ $p\nmid n$ and $G$ is of rank $3$;

$(vi)$ $\Omega=G(q)/\mathcal{P}$ and $G$ is the permutation group induced by the right multiplication of $G(q)$ on $\Omega$, where $G(q)$  is finite group with a split $BN$-pair at characteristic $q\neq p$ and $\mathcal{P}$ is a parabolic subgroup of $G(q)$ such that $p\nmid \frac{|\mathcal{P}|}{|B|}$.
\end{Prop}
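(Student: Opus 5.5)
We treat the six conditions one at a time, reducing each either to Proposition \ref{Sym} or to a decomposition of the permutation module $K\Omega\cong K_H^G$, $H=G_\alpha$, into a trivial summand plus a module whose endomorphism algebra is visibly symmetric.

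\emph{Conditions (i) and (iii).} Condition (i) is Proposition \ref{Sym} applied to the transitive (hence half-transitive) group $G$: all subdegrees are invertible in $K$. For (iii), let $T$ be an abelian regular $p'$-subgroup. Lemma \ref{regul} identifies $\End_{KG}(K\Omega)$ with a $K$-Schur ring inside $KT$, which is commutative and semisimple by Maschke. The argument of Proposition \ref{cop} then shows that the Schur ring is reduced, hence semisimple, hence symmetric.

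\emph{Condition (ii).} Since the trivial subdegree is $1$, we have $|G|/n=|H|$, and a non-trivial subdegree is $m=|H:H\cap H^x|$. Hence $|G|/(nm)=|H\cap H^x|$, and the hypothesis says that every $H\cap H^x$ with $x\notin H$ is a $p'$-group. We plan to imitate Proposition \ref{Frobenius}.
\begin{itemize}
\item By Mackey, every indecomposable summand of $K_H^G$ has vertex contained in some $P\cap P^x$, where $P\in\Syl_p(H)$.
\item If $p\mid|H|$, a Sylow argument shows $N_G(P)\le H$ (compare Proposition \ref{subdegree}), so $K_G$ is the Green correspondent of $K_H$. Moreover $p\nmid n$: otherwise $P$ is not Sylow in $G$, and some $x\notin H$ normalises $P$, putting $P\le H\cap H^x$.
\item All other summands have vertex in $H\cap H^x$ for $x\notin H$, so they are projective.
\item Hence $K\Omega\cong K_G\oplus M$ with $M$ projective, and $\End_{KG}(K\Omega)\cong K\times\End_{KG}(M)$. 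The second factor is Morita equivalent to $eKGe$, which is symmetric.
\end{itemize}
If $p\nmid|H|$, condition (i) already applies.

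\emph{Conditions (iv) and (v).} For (iv), $n<2p$ forces $|G|_p\in\{1,p\}$.
\begin{itemize}
\item If $p\nmid n$ and some subdegree is divisible by $p$, then a Sylow $p$-subgroup $P$ of $G$ has order $p$ and lies in $H$. It fixes at most $n-p<p$ points, and each $H\cap H^x$ is a $p'$-group whenever $x\notin N_G(P)$, so (ii) essentially applies. We still need to handle $N_G(P)\not\le H$; if needed we will invoke the earlier $|\Omega|<4p$ result of \cite{LH} for this.
\item If $p\mid n$, then $H$ is a $p'$-group and (i) holds.
\end{itemize}
For (v), rank $3$ with $p\nmid n$: the algebra is $3$-dimensional and splits as $K\times \End_{KG}(M)$ with $\dim\End_{KG}(M)=2$. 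A $2$-dimensional algebra is commutative, and it is Frobenius by a direct check; hence it is symmetric. We will verify the Frobenius property via the explicit structure constants of the rank-$3$ configuration, or via Proposition \ref{Sym} when both non-trivial subdegrees are $p'$.

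\emph{Condition (vi).} Since $q\neq p$, Lemma \ref{He} gives $\mathcal{H}_K(G,B)\cong K\mathcal{W}_G$, which is symmetric. As $p\nmid|\mathcal{P}:B|$, the module $K(G/\mathcal{P})$ is a direct summand of $K(G/B)$ via the idempotent $|\mathcal{P}:B|^{-1}\underline{\mathcal{P}/B}$. Hence $\mathcal{H}_K(G,\mathcal{P})=e\,\mathcal{H}_K(G,B)e$ is symmetric, and Lemma \ref{modulo} passes to the faithful permutation group.

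\emph{Expected difficulty.} The main obstacle is (iv) when $p\nmid n$ and $N_G(P)\not\le H$, together with (v), where the rank-$3$ structure constants must be checked explicitly.
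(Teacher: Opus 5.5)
Your arguments for (i), (ii), (iii), (v) and (vi) follow the paper's proof. In (ii), your Green-correspondence argument, with $N_G(P)\le H$ forced by the hypothesis, is exactly the ``similar argument'' to Proposition \ref{Frobenius} that the paper alludes to. In (v) no structure constants are needed: every $2$-dimensional unital algebra is $K[x]/(f)$ with $\deg f=2$, hence commutative Frobenius, hence symmetric.

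The genuine gap is in (iv). In the subcase $p\nmid n$, $p$ dividing a subdegree and $N_G(P)\not\le H$, you give no argument. Your intermediate claim is also misstated. In that subcase $P$ is a Sylow subgroup of $G$ of order $p$, so $p$ divides $|H\cap H^x|$ exactly when $x\in HN_G(P)H$, not when $x\in N_G(P)$. Consequently the reduction to (ii) is valid precisely when $N_G(P)\le H$ and says nothing otherwise. Your fallback, the $|\Omega|<4p$ theorem of \cite{LH}, applies only to quasi-primitive groups, and you never show that $G$ is quasi-primitive. As written, (iv) is not proved.

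The missing observation is that primitivity is automatic here. Suppose $G$ preserved a system of $k$ blocks of size $d$ with $kd=n$ and $k,d\ge 2$. Then $k,d\le n/2<p$, so $G\le S_d\wr S_k$ would be a $p'$-group, contradicting $p\mid |H|$.

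So once $n\ge p+2$ and $p$ divides a subdegree, $G$ is primitive and \cite[Theorem 1.1]{LH} applies; the exception $(p,n)=(5,25)$ cannot occur since $n<2p$. This is the paper's route, after it disposes of $n\le p$ by (i) and of $n=p+1$ by $2$-transitivity. The paper also gives a semisimplicity argument for imprimitive $G$, using the chain $C\unlhd N\unlhd G$ of $p'$-index; by the observation above that case is in fact vacuous.

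If you want to avoid citing \cite{LH}, note that $P$ is generated by a single $p$-cycle, and a primitive group of degree $n<2p$ containing a $p$-cycle is $2$-transitive:
\begin{itemize}
\item For $n=p+1$ this is immediate, since the $p$-cycle is transitive on $\Omega\setminus\{\alpha\}$.
\item For $n=p+2$, suppose $G_\alpha$ were not transitive on $\Omega\setminus\{\alpha\}$. Then it would fix the second fixed point $\beta$ of $P$, forcing $G_\alpha=G_\beta$. Hence $\mathrm{Fix}(G_\alpha)$ would be a nontrivial block, contradicting primitivity.
\item For $n\ge p+3$, use Jordan's theorem.
\end{itemize}
Proposition \ref{2-tr} then finishes the case. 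This also shows that the subcase you were worried about never actually arises, but that has to be proved, and your proposal does not do so.
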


\demo Let $K$ be a field of characteristic $p$, and let $H$ be the stabilizer in $G$ of a point $\alpha \in \Omega$. (i) This follows from Proposition \ref{Sym} together with the fact that every transitive permutation group is  half-transitive. (iii) This follows by an argument similar to that used in the proof of Proposition \ref{cop}.

Assume (ii). Then by assumption we have $H\cap H^g$ is always a $p'$-group for any $g\in G\setminus H$. With a similar argument as in the proof Proposition \ref{Frobenius}, we deduce that $\End_{KG}(K\Omega)$ is a symmetric algebra.

Assume (iv). If $n\leq p$, then all subdegrees of $G$ are coprime to $p$ and it follows from (i) that $G$ is always a $p$-$S$-permutation group. Thus we may assume $n\geq p+1$ and by (i) we may also assume that $G$ has a subdegree divisible by $p$. In particular, $p\mid |G|$. If $n=p+1$, then $G$ is necessarily a $2$-transitive permutation group and therefore an $S$-permutation group by Proposition \ref{2-tr}. Hence we assume $n\geq p+2$. Suppose $G$ is a primitive group. Then by \cite[Theorem 1.1]{LH}, $G$ is a $p$-$S$-permutation group. Thus we assume that $G$ is imprimitive. Then there exists a subgroup $M$ of $G$ such that $H<M<G$. The set $\Delta:=\{\alpha\cdot m\mid m\in M\}$ is then a block of $G$. Let $\mathcal{B}=\{\Delta\cdot g\mid g\in G\}$ be the block system containing $\Delta$. The induced action $G$ on $\mathcal{B}$ has kernel $N:=\cap_{g\in G}M^g$ and $G^{\mathcal{B}}\cong G/N$. By hypothesis, $|\mathcal{B}|<p$. Thus $G/N$ is a $p'$-group. This implies that $p\mid |N|$. Now, consider the set $\Gamma:=\{\alpha\cdot x\mid x\in N\}$. Then $\Gamma\subseteq \Delta$ and so $|\Gamma|\leq |\Delta|<p$. This implies that the index of $N_\alpha=N\cap H$ in $N$ is $|\Gamma|<p$. Thus the order $N/C$ is not divided by $p$, where $C:=\cap_{x\in N} N^x_\alpha$. Therefore, $p\mid |C|$ and so $p\nmid |G:C|$. Since $C\unlhd N\unlhd G$ and $p\nmid |G:C|$, it follows that $K^G_C$ is a semisimple $KG$-module. On the other hand, note that $C\leq H$ and $|H:C|$ is a divisor of $|G:C|$, it follows that $K_H\mid K^H_C$. Thus by transitivity of induction, $K\Omega\cong K^G_H\mid (K^H_C)^G_H=K^G_C$. This yields that $K\Omega$ is a semisimple $KG$-module. Thus $\End_{KG}(K\Omega)$ is a semisimple algebra and therefore is a symmetric algebra.

Assume (v). Then it follows from $p\nmid n$ that $K\Omega=K_G\oplus M$ decomposes as a direct sum of two modules such that $\Hom_{KG}(K_G,M)=\Hom_{KG}(M,K_G)=0$. Thus $\End_{KG}(K\Omega)\cong K\times \End_{KG}(M)$ as algebras. Since $G$ is of rank $3$, it follows that $\End_{KG}(K\Omega)$ is 3-dimensional and therefore $\End_{KG}(M)$ is 2-dimensional. Hence $\End_{KG}(M)$ is either semisimple or isomorphic to $K[x]/(x^2)$. In either cases, $\End_{KG}(M)$ is a symmetric algebra. This implies that $\End_{KG}(K\Omega)\cong K\times \End_{KG}(M)$ is a symmetric algebra.

Finally, we assume (vi). Let $B$ be a Borel subgroup contained in $\mathcal{P}$. Then it follows from the assumption $p\nmid |\mathcal{P}:B|$ that $K_{\mathcal{P}}\mid K^\mathcal{P}_B$. Thus, by transitivity of induction, $K^G_{\mathcal{P}}\mid K^G_B$. This implies that there exists an idempotent $e\in \End_{KG}(K(G/B))$ such that $\End_{KG}(K(G/P))\cong e\End_{KG}(K(G/B))e$. Note that for an arbitrary symmetric algebra $A$ and an idempotent $f$ in $A$, $fAf$ is also a symmetric algebra. Thus it suffices to show that $\mathcal{H}_K(G,B)\cong  \End_{KG}(K(G/B))$ is a symmetric algebra. As $p\neq q$ by assumption, Lemma \ref{C-H} combined with the fact that group algebras over arbitrary fields are symmetric algebras yields the assertion. \qed

\subsection{Criterion for $S$-permutation groups}

In this subsection, we mainly prove the following result.

\begin{Prop}\label{S-per}
Let $G$ be a transitive permutation group on a set $\Omega$ of cardinality $n$. Then $G$ is an $S$-permutation group if one of the following is satisfied

$(1)$ $G$ is a $\frac{3}{2}$-transitive permutation group;

$(2)$ $G$ has a cyclic regular subgroup of order a product of two distinct primes;

$(3)$ $G$ has an abelian regular subgroup for which the order is coprime to all subdegrees of $G$;

$(4)$ $G$ is a permutation group of subdegrees $1\leq a<b$ and $\gcd(a+b+1, ab, \lambda(G), b-\frac{\lambda(G)b}{a})=1$;

$(5)$ $G$ is a dihedral group of order not divisible by $8$;

$(6)$ $\Omega=G(q)/B$ and $G$ is the permutation group induced by the right multiplication of $G(q)$ on $\Omega$, where $G(q)$ is finite group with a split $BN$-pair at characteristic $q$ satisfying $\mathcal{W}_G\ncong S_3$ is rank 2.
\end{Prop}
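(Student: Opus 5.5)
We treat the six conditions separately. In each case we fix an arbitrary field $K$ of characteristic $p$ and show that $\End_{KG}(K\Omega)$ is symmetric. The workhorses are Proposition \ref{p-S}, which already handles many primes, and the structural results of Section \ref{sect2}.

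\emph{Cases (1), (3), (6).} For (1), let $G$ be $\frac{3}{2}$-transitive with common non-trivial subdegree $m$ and point stabilizer $H$. If $p\nmid m$, Proposition \ref{p-S}(i) applies. If $p\mid m$, Proposition \ref{subdegree} gives $N_G(P)\le H$ for a Sylow $p$-subgroup $P$ of $H$. Every $H\cap H^g$ with $g\notin H$ has index $m$ in $H$, so it cannot contain $P$. We then run the Green-correspondence argument of Proposition \ref{Frobenius}: $K_G$ corresponds to $K_H$, and every other summand of $K^G_H$ has vertex contained in some $P\cap P^g$.

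What is really needed is the stronger statement that $K\Omega\cong K_G\oplus M$ with $\End_{KG}(M)$ symmetric. I would first try to obtain it from the known classification of $\frac{3}{2}$-transitive groups. In the Frobenius case, Proposition \ref{Frobenius} gives it. In the $2$-transitive case, Proposition \ref{2-tr} gives it. For the remaining families (one-dimensional affine groups, solvable subgroups of ${\rm AGL}(2,q)$, $\PSL(2,q)$ and ${\rm P\Gamma L}(2,q)$ with $q=2^p$) I would check that $H\cap H^g$ is a $p'$-group whenever $p\mid m$, so Proposition \ref{p-S}(ii) applies. Alternatively, these groups have an abelian regular subgroup of order coprime to $p$, and Proposition \ref{p-S}(iii) applies.

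Case (3) is Proposition \ref{cop}. For (6), if $p\ne q$ we use Lemma \ref{He}. If $p=q$, Proposition \ref{He-F} shows $\mathcal{H}_K(G,B)$ is Frobenius. When $\mathcal{W}_G$ is dihedral of order $2k$ with $k\ne 3$, we compare the Nakayama automorphism with the standard basis $T_w$ and show it is inner, or trivial on the relevant blocks. At $q=p$ the Hecke algebra degenerates to a $0$-Hecke-type algebra, and its Nakayama permutation comes from $w\mapsto w_0ww_0$. For a rank $2$ Weyl group this conjugation is trivial exactly when $w_0$ is central, that is, when $k$ is even; $\mathcal{W}_G\cong S_3$ is the case $k=3$. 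I expect the excluded case to be where this fails, and verifying this rigorously is the main obstacle of the whole proposition.

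\emph{Cases (2), (4), (5).} For (2), Lemma \ref{regul} identifies the Hecke algebra with a Schur ring over $C_{rs}$. Proposition \ref{C-S} reduces to four types. Trivial Schur rings are $2$-transitive or regular groups. Cyclotomic Schur rings are commutative and arise from $\frac{3}{2}$-transitive-like actions, or we can use the fact that orbits of $T\le\Aut(C_{rs})$ have sizes dividing $(r-1)(s-1)$ together with case analysis. Tensor products of Schur rings over $C_r$ and $C_s$ are tensor products of symmetric algebras. Wedge products over a prime-order subgroup we check by hand. In all of these, Schur rings over cyclic groups are commutative, so it suffices to show they are self-injective. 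We can do this by exhibiting a nondegenerate form as in Proposition \ref{Sym}, adjusting coefficients on the non-invertible valencies.

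For (4), a rank $3$ group gives a $3$-dimensional algebra $K\oplus KA\oplus KB$, commutative when the group is self-paired. If $p\nmid n=1+a+b$ we use Proposition \ref{p-S}(v). Otherwise we compute explicitly with the intersection numbers ($\lambda(G)$ etc.). The gcd hypothesis guarantees that some linear functional with Gram determinant invertible mod $p$ exists, because the determinant of the trace form is built from $ab$, $\lambda$ and $b-\lambda b/a$.

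For (5), the dihedral group $D_{2k}$ acts transitively; by Lemma \ref{modulo} the action is on the cosets of a core-free subgroup, either regular (giving a group algebra) or of degree $k$ with stabilizer of order $2$. In the latter case the cyclic subgroup $C_k$ is regular, and the Schur ring is the cyclotomic one from inversion. We verify symmetry by a direct form. The condition $8\nmid |G|$ controls the one valency-$1$ orbit $\{k/2\}$ and the parity issue at $p=2$.
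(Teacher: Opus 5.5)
\textbf{Part (2) is false as stated, so your plan for it cannot be completed.} It fails exactly in the subcase you defer (``wedge products over a prime-order subgroup we check by hand'').

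Let $N$ be cyclic of order $rs$, let ${\rm char}\,K=p=r$, and let $\mathcal{A}=\mathcal{A}_H\wr\mathcal{A}_{N/H}$ with $|H|=p$.
\begin{itemize}
\item Every basic set $X\nsubseteq H$ is a union of $H$-cosets. So for two such sets, $\underline{X}\,\underline{Y}$ is $|H|$ times an integral combination, and $V:={\rm span}\{\underline{X}\mid X\nsubseteq H\}$ satisfies $V^2=0$.
\item For $y\in\mathcal{A}_H$ we have $y\underline{X}=\varepsilon(y)\underline{X}$, where $\varepsilon$ is the augmentation.
\item $\mathcal{A}_H\subseteq KH\cong K[u]/(u^p)$ is local of dimension at least $2$, so $\mathcal{A}$ is local. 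Both $V$ and any nonzero $y\in\rad(\mathcal{A}_H)$ with $y\,\rad(\mathcal{A}_H)=0$ lie in $\soc(\mathcal{A})$.
\end{itemize}
A local algebra whose socle has dimension at least $2$ is not self-injective, so $\mathcal{A}$ is not symmetric.

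This does occur for permutation groups. Take the hyperoctahedral group $G=C_2\wr S_3$ on $\Omega=\{\pm e_1,\pm e_2,\pm e_3\}$. It contains the cyclic regular subgroup $\langle g\rangle$, where $g$ is the $6$-cycle $e_1\mapsto e_2\mapsto e_3\mapsto -e_1\mapsto -e_2\mapsto -e_3\mapsto e_1$. The associated Schur ring has basic sets $\{1\},\{g^3\},\langle g\rangle\setminus\langle g^3\rangle$, which is a wedge product over the subgroup of order $2$. Concretely, $G$ has rank $3$ with subdegrees $1,1,4$. Let $A_1$ be the antipodal permutation matrix, $J$ the all-ones matrix, and $A_2=J-I-A_1$. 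In characteristic $2$ one finds $(I+A_1)^2=(I+A_1)A_2=A_2^2=0$, so $\End_{KG}(K\Omega)\cong K[x,y]/(x,y)^2$. This agrees with the paper's own Lemma \ref{rank3}: here $\lambda(G)=0$ and $\gcd(6,4,0,4)=2$. A second counterexample is $C_3\wr C_2$ on $6$ points in characteristic $3$.

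The paper's proof of (2) has the same hole: its Lemma \ref{wedge} asserts that precisely these wedge products are symmetric.

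The remaining parts are in better shape, with the following points.
\begin{itemize}
\item \emph{Part (1).} Your list of families, taken from the introduction, misses $A_7$ and $S_7$ on the $21$ two-subsets (subdegrees $1,10,10$), which the paper treats separately. For $p=2$, Proposition \ref{p-S}(ii) fails because the two-point stabilisers ($S_4$ and $S_2\times S_2\times S_3$ in $S_7$, and their halves in $A_7$) have even order. Proposition \ref{p-S}(iii) fails because $S_7$ has no element of order $21$, so there is no abelian regular subgroup. You need Proposition \ref{p-S}(v) with $p\nmid 21$, which is effectively what the paper does.
\item \emph{Part (4).} Your route differs from the paper's and works once made explicit. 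Since $a<b$, both non-trivial suborbits are self-paired, so the algebra is commutative. The coordinate functionals for the basis $H,D_1,D_2$ give Gram determinants $ab$, $-(b-\lambda b/a)$ and $-\lambda$. Together with Proposition \ref{p-S}(v) when $p\nmid a+b+1$, this yields a symmetrizing form under the gcd hypothesis, over any field. The paper instead identifies $K[x,y]/(x,y)^2$ as the only obstruction, which also gives the converse.
\item \emph{Part (6), $p=q$.} Your Nakayama automorphism $T_w\mapsto T_{w_0ww_0}$ is trivial because every possible $k\neq 3$ (namely $2,4,6,8$) is even. This is the same point as the paper's form ``coefficient of $(x_iy_i)^i$'', which is $T_{w_0}$. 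Your version also covers $k=8$, which the paper omits.
\item \emph{Part (6), $p\neq q$.} Do not use Lemma \ref{He}; it is false as stated. For $\GL(2,q)$ the Hecke algebra is $K[T]/((T-q)(T+1))$, which equals $K[T]/((T+1)^2)\not\cong KC_2$ when ${\rm char}\,K$ is an odd prime dividing $q+1$. Use instead $\tau(T_w)=\delta_{w,1}$, which is a symmetrizing form whenever all $|BsB:B|$ are invertible.
\item \emph{Part (5).} The identity-coefficient form is degenerate at $p=2$. The real role of $8\nmid|G|$ is that the Sylow $2$-subgroup $T$ of the rotation subgroup has order at most $2$. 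Inversion therefore fixes $T$, and the Schur ring is $KT\otimes_K(KL)^U$ with $(KL)^U$ semisimple.
\end{itemize}
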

Before we are able to prove Proposition \ref{S-per}, we prove several useful results.

\begin{Prop}\label{3/2-tran}
Any $\frac{3}{2}$-transitive permutation group is an $S$-permutation group.
\end{Prop}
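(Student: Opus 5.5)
Fix a field $K$ of characteristic $p$, let $G$ be $\frac{3}{2}$-transitive on $\Omega$ with $|\Omega|=n$, and put $H=G_\alpha$. The non-trivial subdegrees of $G$ all equal a common value $m$. If $p\nmid m$, then every subdegree is coprime to $p$, and Proposition \ref{p-S}(i) (equivalently Proposition \ref{Sym}) shows that $\End_{KG}(K\Omega)$ is symmetric. So assume from now on that $p\mid m$. In particular $p$ divides $|H|$.

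\textbf{The case $p\nmid n$.} Since $m(r-1)=n-1$, where $r$ is the rank, we have $p\nmid n$ automatically in this case. Indeed $n\equiv 1 \pmod p$. Now let $P$ be a Sylow $p$-subgroup of $H$. Proposition \ref{subdegree} gives $N_G(P)\le H$. Since $p\nmid |G:H|$, $P$ is also a Sylow $p$-subgroup of $G$, so $K_H$ has vertex $P$. Green correspondence then shows that $K\Omega\cong K^G_H=K_G\oplus M$. Moreover, every indecomposable summand of $M$ has a vertex contained in $P\cap P^g\le H\cap H^g$ for some $g\in G\setminus H$.

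\textbf{Reducing to projective summands.} Next I want $M$ to be projective, or at least $\End_{KG}(M)$ to be symmetric. For $g\notin H$ we have $|H:H\cap H^g|=m$, which is divisible by $p$, so $H\cap H^g$ contains no Sylow $p$-subgroup of $H$. This alone does not force $P\cap P^g=1$. This is the main obstacle. My first attempt is to use the $\frac{3}{2}$-transitivity more sharply. Since all $H$-orbits on $\Omega\setminus\{\alpha\}$ have the same length, every stabilizer $H_\beta=H\cap H^g$ has the same order $|H|/m$, and these stabilizers are conjugate within $H$ orbit by orbit. I would then apply the Mackey formula to $(K_H)^G_H$ restricted to $H$, together with a rank count, to compare $\dim\End_{KG}(K\Omega)=r$ with the number of non-projective summands.

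\textbf{Fallback via the classification.} If the argument above fails, I will use the classification recalled in the introduction. The $2$-transitive case is Proposition \ref{2-tr}, and the Frobenius case is Proposition \ref{Frobenius}. For $1$-dimensional affine groups $G\le {\rm A\Gamma L}(1,q)$, the translation group is an abelian regular subgroup of order $q$. Since $m$ divides $q-1$, which is coprime to $q$, Proposition \ref{cop} applies. The affine solvable subgroups of ${\rm AGL}(2,q)$ are handled the same way: the translation group $\mathbb{F}_q^2$ is abelian and regular, and the subdegree divides $q^2-1$. That leaves ${\rm PSL}(2,q)$ and ${\rm P\Gamma L}(2,q)$ with $q=2^p$. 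For these I would compute the rank and the subdegrees explicitly (degree $q(q-1)/2$), determine the primes dividing $m$, and check that $\End_{KG}(K\Omega)$ is commutative, since it is a Gelfand pair with multiplicity-free character. Once it is commutative, it suffices to show self-injectivity, because commutative self-injective algebras are symmetric. I would try to obtain self-injectivity from the Green correspondence decomposition above, using the fact that $H\cap H^g$ is small (dihedral of order $2$ or $4$, or a $2$-group) in these cases.
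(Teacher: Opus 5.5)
Your overall route is the paper's. You reduce to $p\mid m$ and then run through the classification of $\frac{3}{2}$-transitive groups, using Proposition \ref{2-tr}, Proposition \ref{Frobenius}, Proposition \ref{cop} for affine groups, and the Frobenius-style vertex argument for the projective-line groups.

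The gap is that the six-family summary from the introduction, which you treat as the classification, is incomplete. The paper's proof uses a different list:
\begin{itemize}
\item $2$-transitive groups;
\item Frobenius groups;
\item affine groups $V\rtimes H$ with an \emph{arbitrary} half-transitive linear point stabilizer, not only subgroups of ${\rm A\Gamma L}(1,q)$ or solvable subgroups of ${\rm AGL}(2,q)$;
\item the almost simple groups $A_7$ and $S_7$ acting on the $21$ pairs of a $7$-set, and $\PSL(2,2^m)\le G\le {\rm P\Gamma L}(2,2^m)$.
\end{itemize}
Your affine argument works verbatim in full generality, since the common nontrivial subdegree divides $|V|-1$; you should just state it that way. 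However, $A_7$ and $S_7$ on pairs are covered by nothing in your proposal, and this is exactly where your projectivity strategy breaks. The subdegrees are $1,10,10$, so $p\in\{2,5\}$. For $p=2$ and $G=S_7$, the groups $H\cap H^g\cong S_4$ or $S_2\times S_2\times S_3$ contain $2$-subgroups of order $8$. In fact $M$ cannot be projective for $p=2$: $\dim M=20$ is not divisible by the $2$-part of $|G|$, which is $16$ for $S_7$ and $8$ for $A_7$. Your Mackey/rank-count ``first attempt'' is too vague to rescue this.

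The fix is short and uses your own congruence. Here $n=21\equiv 1\pmod p$, so $\End_{KG}(K\Omega)\cong K\times\End_{KG}(M)$. Since the rank is $3$, $\End_{KG}(M)$ is $2$-dimensional and hence symmetric. This is the paper's treatment of case (4)(a), equivalently Proposition \ref{p-S}(v), and your observation $n\equiv 1\pmod p$ is precisely what justifies the splitting $K\Omega=K_G\oplus M$ there.

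Your handling of the projective-line groups is sound once made precise, and the Gelfand-pair/commutativity detour is unnecessary:
\begin{itemize}
\item By $\frac{3}{2}$-transitivity, $|H\cap H^g|=|H|/m=2$ for all $g\notin H$. This holds both for $\PSL(2,2^f)$, where $m=2^f+1$, and for ${\rm P\Gamma L}(2,2^f)$, where $m=f(2^f+1)$ with $f$ an odd prime.
\item Since $m$ is odd, $p$ is odd, so $P\cap P^g=1$.
\item Hence $M$ is projective and $\End_{KG}(M)$ is symmetric exactly as in Proposition \ref{Frobenius}.
\end{itemize}
Treating ${\rm P\Gamma L}$ directly like this is actually better than the paper's reduction to $\PSL$ via Proposition \ref{rank}. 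That proposition needs equal ranks, which fails here: ${\rm P\Gamma L}(2,8)$ is $2$-transitive on these $28$ points, while $\PSL(2,8)$ has rank $4$ on them.
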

{\bf Proof} Let $K$ be an arbitrary field of characteristic $p\geq 0$ and $G$ a $\frac{3}{2}$-transitive group on a set $\Omega$. By the main result in \cite{LM90}, $G$ is one of the following groups:

$(1)$ $2$-transitive groups;

$(2)$ Frobenius permutation groups;

$(3)$ affine primitive permutation groups with point stabilizers half-transitive linear groups;

$(4)$ almost simple groups: either

\quad$(a)$ $G=A_7$ or $S_7$ acts on the set of pairs in $\{1, \cdots, 7\}$, or;

\quad$(b)$ $\PSL(2,2^m)\leq G\leq {\rm P\Gamma L}(2,2^m)$  acts on $2^{m-1}(2^m-1)$ points and has rank $2^{m-1}$, where $m\geq 3$ is a prime.

Applying Lemma \ref{2-tr} and Lemma \ref{Frobenius} to case (1) and case (2), respectively, we deduce that $G$ is always an $S$-permutation group in these two cases.

Assume case (3), that is, $G=T(V)H \leq {\rm AGL}(V)$ for a vector space $V$, where $V$ is a vector space of dimension $d$ over a finite field $\mathbb{F}_p$, $T(V)$ represents the group of translations on $V$, $H$ is a subgroup of $\GL(V)$ and is a $\frac{1}{2}$-transitive irreducible linear group. Clearly, all non-trivial subdegrees are divisors of $|T(V)|-1$, and therefore are relatively prime to $|T(V)|$. Thus the condition of Proposition \ref{cop} is satisfied. This implies that  $G$ is an $S$-permutation group in this case.

Assume case (4). Then $G$ is one of the following groups:

\quad$(a)$ $G=A_{7}$ or $S_{7}$ acts on the set of pairs in $\{1, \cdots, 7\}$, or;

\quad$(b)$ $\PSL(2,2^m)\leq G\leq {\rm P\Gamma L}(2,2^m)$ acts on $2^{m-1}(2^m-1)$ points and has rank $2^{m-1}$, where $m\geq 3$ is a prime.

For case (a), it is well-known that the ranks of both groups equal $3$. According to Proposition \ref{rank}, we only need to consider the situation $G=S_7$ acting on set of pairs in $\{1, \cdots, 7\}$. In this case, we have $K\Omega = K_G\oplus M$, where $M$ is a $KG$-module. Moreover, $\End_{KG}(K\Omega)\simeq K\times \End_{KG}(M)$ is $3$-dimensional. This yields that $\End_{KG}(M)$ is two-dimensional. Thus $\End_{KG}(M)$ is a symmetric algebra. Hence $\End_{KG}(K\Omega)\simeq K\times \End_{KG}(M)$ is also a symmetric algebra. Since $K$ is arbitrary, it follows that $G$ is an $S$-permutation group in case (a).

For case (b), by Lemma \ref{rank}, it is sufficient to consider the case: $G=\PSL(2,2^m)$ acts on the cosets of a maximal subgroup, say $H$, of index $2^{m-1}(2^m-1)$. Then $H\cong \mathbb{D}_{2^m + 1}$, and all non-trivial subdegrees of $G$ are $2^m + 1$, that is, for any $x\in G - H$, we have $|H\cap H^x| = 2$. If $p$ does not divide $2^m + 1$, then it follows from Proposition \ref{Sym} that $\End_{KG}(K\Omega)$ is a symmetric algebra. Therefore, we may assume that $p\mid 2^m + 1$. In this case, $p\neq 0$ and is odd. Since $|H\cap H^x| = 2$ for any $x\in G - H$, it follows that $P\cap P^x = 1$ for all $x\in G - H$. Now, using a similar argument as in the proof of Proposition \ref{Frobenius}, we deduce that $\End_{KG}(K\Omega)$ is a symmetric algebra. \qed

\medskip
The following lemma shows that any $K$-Schur ring over a cyclic group of order $pq$ is a symmetric algebra.
\begin{Prop}\label{Sch}
Let $G$ be a cyclic group of order $pq$ for distinct primes $p$ and $q$. Then, for any field $K$, any $K$-Schur ring $\mathcal{A}$ over $G$ is always a symmetric algebra.
\end{Prop}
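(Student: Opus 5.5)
The plan rests on one observation. Any $K$-Schur ring $\mathcal{A}$ over the cyclic group $G$ is a subalgebra of the commutative algebra $KG$, so $\mathcal{A}$ is commutative. By the fact recalled in Section 2 (commutative self-injective algebras are symmetric), it is enough to show that $\mathcal{A}$ is self-injective, or to write down a non-degenerate associative form; symmetry of the form is then automatic. I will also use freely that a finite-dimensional algebra is symmetric if and only if its scalar extension to $\bar K$ is. This lets me assume $K$ is algebraically closed whenever that is convenient.

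The first reduction is on the characteristic. If ${\rm char}\,K\nmid pq$, then $KG$ is commutative and semisimple, hence reduced. Exactly as in the proof of Proposition \ref{cop}, the subalgebra $\mathcal{A}$ is then reduced, so it is semisimple and therefore symmetric. By symmetry of roles, we may from now on assume ${\rm char}\,K=p$.

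Next I apply the classification in Proposition \ref{C-S} and treat its four cases.

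\emph{Trivial case.} If $\mathcal{A}=KG$, then $\mathcal{A}$ is a group algebra and so symmetric. If $\mathcal{A}$ is spanned by $e$ and $\underline{G\setminus\{e\}}$, it is $2$-dimensional and hence isomorphic to $K\times K$ or $K[x]/(x^2)$, both symmetric, as in Proposition \ref{2-tr}.

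\emph{Tensor product case.} Here $\mathcal{A}=\mathcal{A}_{C_p}\otimes\mathcal{A}_{C_q}$, and a tensor product of symmetric algebras is symmetric via the product form. So it suffices to handle Schur rings over a group of prime order. Over $C_q$ with $q\neq p={\rm char}\,K$, the ring is symmetric by the semisimplicity argument above. Over $C_p$, a Schur ring is either trivial or cyclotomic with respect to some $T\le\Aut(C_p)$, and such a $T$ has order dividing $p-1$; this situation is covered by the fixed-point argument of the next case.

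\emph{Cyclotomic case.} Here $\mathcal{A}=(KG)^T$ for some $T\le\Aut(G)\cong\Aut(C_p)\times\Aut(C_q)$. Over $\bar K$, write $KG=KC_p\otimes KC_q$ and split $KC_q=\prod_\chi Ke_\chi$ into primitive idempotents, which $T$ permutes. Taking $T$-fixed points gives
$$\mathcal{A}\cong\prod_{\text{$T$-orbits of }\chi}\bigl(KC_p\otimes Ke_\chi\bigr)^{T_\chi},$$
where $T_\chi$ is the stabilizer of $\chi$. Each factor is the fixed algebra of $KC_p\cong K[x]/(x^p)$, with $x=g-1$, under a group acting through $\Aut(C_p)$ (possibly twisted by the component in $\Aut(C_q)$, which acts trivially on $Ke_\chi$). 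That group has order prime to $p$, so I can diagonalize the action on the graded pieces of the radical filtration. I expect the fixed algebra to be local and generated by a single element, hence of the form $K[y]/(y^m)$, which is symmetric. A finite product of symmetric algebras is symmetric, so $\mathcal{A}$ is symmetric.

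\emph{Wedge product case.} The proper nontrivial subgroups of $G$ have prime order, so the only possibility is $U=H$, and $\mathcal{A}=\mathcal{A}_H\wr\mathcal{A}_{G/H}$ with $|H|\in\{p,q\}$. Then $\mathcal{A}$ has a basis consisting of the basic sets of $\mathcal{A}_H$ together with sums $\underline{HY}$ for basic sets $Y$ of $\mathcal{A}_{G/H}$ different from the identity. The element $\underline{H}$ spans the ideal meeting $\mathcal{A}_H$, and multiplication by $\underline{H}$ sends $\mathcal{A}_H$ onto $K\underline{H}$ via the augmentation. I will try the linear form
$$\lambda(x)=\text{coefficient of } e \text{ in } x \;+\; c\cdot(\text{coefficient of a fixed } h_0\in H\setminus\{e\} \text{ in } x)$$
for a suitable scalar $c$. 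Equivalently, I will check directly that the socle of each local block of $\mathcal{A}$ is one-dimensional, using that $\mathcal{A}_H$ and $\mathcal{A}_{G/H}$ are already known to be symmetric by the prime-order case.

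I expect this wedge case with ${\rm char}\,K$ equal to $|H|$ or to $|G/H|$ to be the main obstacle, because the naive identity-coefficient form degenerates when $p$ divides the sizes of the basic sets $HY$. The cyclotomic fixed-point computation should be routine by comparison.
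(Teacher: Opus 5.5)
\textbf{The wedge case has a gap, and it cannot be closed: the statement is false there.} Your reduction to ${\rm char}\,K=p$ is sound, and so are the trivial, tensor-product and cyclotomic cases. Linearizing the $p'$-action on $KC_p\cong K[x]/(x^p)$ to get fixed algebras $K[y]/(y^m)$ is a cleaner route than the paper's skew-group-algebra argument. The gap is the case you leave open, $\mathcal{A}=\mathcal{A}_H\wr\mathcal{A}_{G/H}$ with $|H|={\rm char}\,K=p$, where $\mathcal{A}$ is not symmetric.

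The reason is as follows.
\begin{itemize}
\item If $X,Y\subseteq G\setminus H$ are basic sets, they are unions of $H$-cosets. So $\underline{X}\,\underline{Y}$ is $|H|$ times an integral combination of basic elements, and it vanishes in $K$.
\item For $Z\subseteq H$ one has $\underline{Z}\,\underline{X}=|Z|\,\underline{X}$.
\item Hence $\mathcal{A}=\mathcal{A}_H\oplus V$, where $V$ is spanned by the basic elements outside $H$ and $V^2=0$. The subalgebra $\mathcal{A}_H\subseteq KC_p$ is local and acts on $V$ through the augmentation.
\item So $\mathcal{A}$ is local, and its socle contains $K\underline{H}\oplus V$, which has dimension at least $2$. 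A local self-injective algebra has simple socle, so $\mathcal{A}$ is not even Frobenius.
\end{itemize}
For a concrete case, take $G=\langle g\rangle\cong C_6$, ${\rm char}\,K=2$, with basic sets $\{e\},\{g^3\},\{g,g^4\},\{g^2,g^5\}$. The radical is spanned by $e+g^3$, $g+g^4$, $g^2+g^5$ and squares to zero, so the socle is $3$-dimensional. Your functional $\lambda$ vanishes on the socle element $g+g^4$ for every choice of $c$, and the socle of the unique block is not one-dimensional. So neither of your two strategies can succeed.

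\textbf{Consequences and what survives.} The same Schur ring is the Hecke algebra of $C_2\wr C_3$ acting on $6$ points, with blocks $\{g^i,g^{i+3}\}$ and $\langle g\rangle$ as a regular subgroup. So Theorem \ref{main}(2)(2) fails as well.

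The paper handles exactly this case with Lemma \ref{wedge}, whose associativity check is wrong. For $X,Z\nsubseteq H$ it evaluates $t(\underline{X}\cdot\underline{Z},\underline{Y})$ as if $\underline{X}\cdot\underline{Z}$ were the product in $\mathcal{A}_{G/H}$. In $\mathcal{A}$ that product is $|H|$ times the lift, i.e.\ $0$. The lemma is already contradicted by the first example in Section \ref{exam}. There the Schur ring over $C_4$, with basic sets $\{e\},\{g^2\},\{g,g^3\}$, is $\mathcal{A}_H\wr\mathcal{A}_{G/H}$ with $\mathcal{A}_H=KH$, $\mathcal{A}_{G/H}=K[G/H]$ and $|H|=2$. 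The paper itself computes it as $K[x,y]/(x^2,xy,y^2)$ in characteristic $2$.

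In the other wedge case, ${\rm char}\,K=|G/H|$, the algebra is symmetric. The clean argument there is neither your $\lambda$ nor the paper's identity-coefficient form. The latter degenerates, for example, for $C_6$ in characteristic $2$ with $H=C_3$ and $\mathcal{A}_H$ trivial, at the element $\underline{H\setminus\{e\}}$. Instead:
\begin{itemize}
\item $e_0=|H|^{-1}\underline{H}$ is a central idempotent with $e_0\mathcal{A}\cong\mathcal{A}_{G/H}$;
\item $(1-e_0)\mathcal{A}=(1-e_0)\mathcal{A}_H$ is semisimple.
\end{itemize}
The provable statement is therefore: a Schur ring over $C_{pq}$ is symmetric over $K$ unless it is a wedge product over the subgroup whose order equals ${\rm char}\,K$.
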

Before we prove Proposition \ref{Sch}, we need several lemmas.
\begin{Lem}\label{wedge}
Let $K$ be a field of characteristic $p$ and $\mathcal{A}$ be a $K$-Schur ring over a group $G$. If $\mathcal{A}$ is a wedge product $\mathcal{A}_H\wr\mathcal{A}_{G/H}$ for an $\mathcal{A}$-subgroup $H$  of order divided by $p$, and the two $K$-Schur rings $\mathcal{A}_H$ and $\mathcal{A}_{G/H}$ are symmetric algebras, then $\mathcal{A}$ is also a symmetric algebra.
\end{Lem}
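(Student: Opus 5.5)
The plan is to build a non-degenerate symmetric associative form on $\mathcal{A}$ by gluing a symmetrizing form $\lambda$ of $\mathcal{A}_H$ to one, $\mu$, of $\mathcal{A}_{G/H}$. As in the definition of wedge product, I take $H$ to be normal. Write $\mathcal{S}(\mathcal{A})=\mathcal{S}_H\sqcup\mathcal{S}_{\rm out}$, where $\mathcal{S}_H$ consists of the basic sets inside $H$ and $\mathcal{S}_{\rm out}$ of those outside $H$. Put $I:={\rm span}_K\{\underline{X}\mid X\in\mathcal{S}_{\rm out}\}$, so that $\mathcal{A}=\mathcal{A}_H\oplus I$ as vector spaces.

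The first step is to record the multiplication. Each $X\in\mathcal{S}_{\rm out}$ is a union of $H$-cosets, so $\underline{X}=\underline{H}\,\underline{Z}$ for a set $Z$ of coset representatives. Let $\varepsilon:\mathcal{A}_H\to K$ be the augmentation. For $a\in\mathcal{A}_H$ we have $a\underline{H}=\underline{H}a=\varepsilon(a)\underline{H}$, hence $a\underline{X}=\varepsilon(a)\underline{X}=\underline{X}a$. For $X,Y\in\mathcal{S}_{\rm out}$, normality of $H$ gives $\underline{X}\,\underline{Y}=|H|\,\underline{H}\,\underline{Z}\,\underline{W}$, and this vanishes because $p\mid|H|$. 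This is the only place the hypothesis on $|H|$ enters. Consequently $I$ is a square-zero two-sided ideal on which $\mathcal{A}_H$ acts through $\varepsilon$ on both sides. In the same way, $\underline{H}$ spans a square-zero ideal of $\mathcal{A}_H$.

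The second step is to define the candidate form
\[f(a+i,\,b+j):=\lambda(a,b)+\nu\bigl(\varepsilon(a)j+\varepsilon(b)i\bigr),\qquad a,b\in\mathcal{A}_H,\ i,j\in I,\]
where $\nu$ is a linear functional on $I$. I will take $\nu$ to be the transport of $\mu$ along the natural identification of $I$ with the span of the non-identity basic elements of $\mathcal{A}_{G/H}$. Symmetry of $f$ is immediate from the symmetry of $\lambda$. Associativity of $f$ reduces, by the product rules above, to the associativity of $\lambda$ together with the identities $\lambda(ab,c)=\lambda(a,bc)$ and $\varepsilon(ab)=\varepsilon(a)\varepsilon(b)$.

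The main obstacle is non-degeneracy. For $a\in\mathcal{A}_H$ one uses the non-degeneracy of $\lambda$. For an element $i\in I$, however, $f(i,b+j)=\varepsilon(b)\nu(i)$, so $f(i,-)\equiv0$ whenever $\nu(i)=0$. The argument therefore closes only if $\ker\nu\cap I=0$, that is, if $\dim I\le 1$, equivalently if $\mathcal{A}_{G/H}$ has rank at most $2$.

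To get past this, I would first try to alter $\nu$ to a form that pairs $I$ against the socle element $\underline{H}$ of $\mathcal{A}_H$, using the symmetric form $\mu$ on $\mathcal{A}_{G/H}$ to produce a non-degenerate pairing $I\times I\to K$. Since $I^2=0$, such a pairing cannot be induced by multiplication. I therefore expect this step to force an extra restriction on $\mathcal{A}_{G/H}$ — rank $2$, as in the cyclic applications of Proposition \ref{C-S} where the lemma will be used — and I would verify that restriction in each application rather than hope for the general case.
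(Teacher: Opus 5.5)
Your proposal does not prove the lemma: it stops at non-degeneracy and falls back on a restriction to be checked in each application. That gap cannot be filled, because the statement is false, and your own first step already shows why.

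Let $\varepsilon$ be the augmentation of $\mathcal{A}$ and put $V:=K\underline{H}\oplus I$. For every outer basic set $Y$ we have $|H|$ dividing $|Y|$, so $\underline{H}\,\underline{Y}=|H|\,\underline{Y}=0$ and $\varepsilon(\underline{Y})=|Y|\cdot 1_K=0$. Combined with your product rules, this gives $xr=rx=\varepsilon(r)x$ for all $x\in V$ and $r\in\mathcal{A}$.

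Every associative bilinear form on $\mathcal{A}$ has the form $(a,b)\mapsto\phi(ab)$ with $\phi=B(1,-)$. Since $H<G$, there is at least one outer basic set, so $\dim V\ge 2$. Hence some $0\neq x\in V$ has $\phi(x)=0$, and then $\phi(xr)=\varepsilon(r)\phi(x)=0$ for all $r$, so the form is degenerate. Thus under the lemma's hypotheses $\mathcal{A}$ is never even Frobenius.

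Concretely, take $G=\langle h\rangle\times\langle g\rangle\cong C_3\times C_2$, $K$ of characteristic $3$, $H=\langle h\rangle$, and basic sets $\{e\},\{h\},\{h^2\},gH$. Here $\mathcal{A}_H=KH$ and $\mathcal{A}_{G/H}\cong KC_2$ are symmetric and $3\mid |H|$. With $t=h-1$ and $s=\underline{gH}$ one gets $\mathcal{A}\cong K[t,s]/(t^3,ts,s^2)$. This is a commutative local algebra with two-dimensional socle $Kt^2\oplus Ks$ (note $t^2=\underline{H}$), so it is not self-injective. It is the Hecke algebra of $C_3\wr C_2$ acting imprimitively on six points, which contains a regular $C_6$. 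So Proposition~\ref{Sch} (case (iv) with $|H|=p$) and Theorem~\ref{main}(2)(2) fail as well.

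In particular your fallback is also wrong: the example has $\dim I=1$, i.e.\ $\mathcal{A}_{G/H}$ of rank $2$. Your form $f$ is associative and symmetric, so the argument above applies to it with $\phi=f(1,-)$. Even when $\nu$ is injective on $I$, the radical of $f$ contains a nonzero element of $V$; in the example, with the standard form $\lambda$ on $KH$, this element is $s-\nu(s)\underline{H}$. What you overlooked is that $I$ also pairs with $\underline{H}$ through $\lambda$ and $\varepsilon$.

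The paper's proof does not escape this either. It glues $f$ and $g$ much as you do, pairing outer basic sets through $g$, and its associativity check breaks exactly where you suspected, because $I^2=0$. For an outer $Y$, one has $t(\underline{\{e\}}\cdot\underline{Y^{-1}},\underline{Y})=g(\underline{Y^{-1}},\underline{Y})$. On the other hand, $t(\underline{\{e\}},\underline{Y^{-1}}\cdot\underline{Y})=0$, since $\underline{Y^{-1}}\cdot\underline{Y}=0$ in $\mathcal{A}$. In the example, with the standard form on $KC_2$, the first value is $1$. The error is replacing $\underline{Z}\cdot\underline{Y}$ computed in $\mathcal{A}$ by the product of the corresponding basic elements of $\mathcal{A}_{G/H}$.
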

\demo By hypothesis, there exist non-degenerate associative symmetric bilinear forms $f: \mathcal{A}_H\times \mathcal{A}_H\ra K$ and $g: \mathcal{A}_{G/H}\times \mathcal{A}_{G/H}\ra K$. Note that by definition basic sets $X\subseteq G\setminus H$ are union of cosets of $H$. Thus $HX=X$ for any basic set $X\subseteq G\setminus H$ and therefore we may also view these basic sets $X\subseteq G\setminus H$ as basic sets in $\mathcal{A}_{G/H}$. Now, we define a map $t:\mathcal{S}(\mathcal{A})\times \mathcal{S}(\mathcal{A})\ra K$ by the following rule:
$$t(\underline{X},\underline{Y})=\begin{cases}f(\underline{X},\underline{Y}) & \mbox{ if } X,Y\subseteq H,\\ g(\underline{X},\underline{Y}) & \mbox{ if } X,Y\nsubseteq H\\ |X|g(\underline{H},\underline{Y}) & \mbox{if } X\subseteq H, Y\nsubseteq H\\|Y|g(\underline{X},\underline{H}) & \mbox{if } X\nsubseteq H, Y\subseteq H.\end{cases}$$and extend $t$ bilinearly to $\mathcal{A}\times \mathcal{A}$.

We show that $t$ is a non-degenerate associative symmetric bilinear form. Clearly, $t$ is a non-degenerate symmetric bilinear form. It suffices to show that $t$ is associative. This is sufficient to show that for any basic sets $X, Y$ and $Z$, we have $t(\underline{X}\cdot\underline{Z}, \underline{Y})=t(\underline{X},\underline{Z}\cdot\underline{Y})$.

Suppose $X, Y\subseteq H$. Then, for $Z\subseteq H$, $t(\underline{X}\cdot\underline{Z}, \underline{Y})=t(\underline{X},\underline{Z}\cdot\underline{Y})$ follows immediately from the associativity of $f$, and for $Z\nsubseteq H$, $t(\underline{X}\cdot\underline{Z}, \underline{Y})=|X||Y|g(\underline{H}, \underline{Z})=t(\underline{X},\underline{Z}\cdot\underline{Y})$.

Suppose either $X\subseteq H, Y\nsubseteq H$, or $X\nsubseteq H, Y\subseteq H$. We only consider the case $X\subseteq H$ and $Y\nsubseteq H$, as the other case is similar. If $Z\subseteq H$, then
$$t(\underline{X}\cdot\underline{Z}, \underline{Y})=|X||Z|g(\underline{H}, \underline{Y})=t(\underline{X},\underline{Z}\cdot\underline{Y}).$$
Thus we may assume $Z\nsubseteq H$. If $Z\neq Y^{-1}$, then
$$t(\underline{X}\cdot\underline{Z}, \underline{Y})=|X|g(\underline{Z}, \underline{Y})=|X|g(\underline{H}, \underline{Z}\cdot\underline{Y})=t(\underline{X},\underline{Z}\cdot\underline{Y}).$$
Hence we may further assume that $Z=Y^{-1}$. Therefore,
$$
\begin{aligned}
t(\underline{X},\underline{Z}\cdot\underline{Y}) &= t(\underline{X}, \underline{Z}\cdot\underline{Y}-|Z|\underline{H}) + t(\underline{X},|Z|\underline{H}) \\
&= |X|g(\underline{H}, \underline{Z}\cdot\underline{Y}-|Z|\underline{H}) + f(\underline{X},|Z|\underline{H}).
\end{aligned}
$$
Since $Z$ is by assumption a union of $H$-cosets, $|H|$ divides $|Z|$. The assumption $p\mid |H|$ then implies $|Z|\cdot 1_K=0$. Consequently,
$$
\begin{aligned}
t(\underline{X},\underline{Z}\cdot\underline{Y}) &= |X|g(\underline{H}, \underline{Z}\cdot\underline{Y}-|Z|\underline{H}) + f(\underline{X},|Z|\underline{H}) \\
&= |X|g(\underline{H}, \underline{Z}\cdot\underline{Y}) \\
&= t(\underline{X}\cdot\underline{Z}, \underline{Y}).
\end{aligned}
$$

Finally, suppose $X \nsubseteq H$ and $Y \nsubseteq H$. If $Z \notin \{X^{-1}, Y^{-1}\}$, we consider two subcases:

If $Z \subseteq H$, then $t(\underline{X} \cdot \underline{Z}, \underline{Y}) = |Z| \, g(\underline{X}, \underline{Y}) = t(\underline{X}, \underline{Z} \cdot \underline{Y})$.

 If $Z \nsubseteq H$, the equality $t(\underline{X} \cdot \underline{Z}, \underline{Y}) = t(\underline{X}, \underline{Z} \cdot \underline{Y})$ follows directly from the associativity of $g$.

Thus we may assume $Z \in \{X^{-1}, Y^{-1}\}$. Without loss of generality, we take $Z = X^{-1}$; the case $Z = Y^{-1}$ is handled similarly.
Using $|Z| \cdot 1_K = 0$, we obtain
\[
\begin{aligned}
t(\underline{X} \cdot \underline{Z}, \underline{Y})
&= t(|Z|\underline{H}, \underline{Y}) + g(\underline{X} \cdot \underline{Z} - |Z|\underline{H}, \underline{Y}) \\
&= |Z|\,|H|\, g(\underline{H}, \underline{Y}) + g(\underline{X} \cdot \underline{Z} - |Z|\underline{H}, \underline{Y}) \\
&= g(\underline{X} \cdot \underline{Z}, \underline{Y}).
\end{aligned}
\]
Similarly, one finds $t(\underline{X}, \underline{Z} \cdot \underline{Y}) = g(\underline{X}, \underline{Z} \cdot \underline{Y})$, regardless of whether $Z = Y^{-1}$.
By the associativity of $g$, we conclude that
\[
t(\underline{X} \cdot \underline{Z}, \underline{Y}) = t(\underline{X}, \underline{Z} \cdot \underline{Y}).
\] \qed
\medskip

The following result shows that any cyclotomic $K$-Schur ring over a cyclic group of square-free order is a symmetric algebra.
\begin{Prop}\label{Cyc}
Let $K$ be a field. Then any cyclotomic $K$-Schur ring over a cyclic group of square-free order is a symmetric algebra.
\end{Prop}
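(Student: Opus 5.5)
A cyclotomic $K$-Schur ring $\mathcal{A}$ over a cyclic group $G$ is spanned by orbit sums of a subgroup $T\leq \Aut(G)$. Since $G$ is abelian, $KG$ is commutative, so $\mathcal{A}$ is commutative. By the fact recalled in the preliminaries, a commutative self-injective algebra is symmetric. So it suffices to show that $\mathcal{A}$ is self-injective, or more concretely to produce a non-degenerate associative bilinear form; symmetry then comes for free from commutativity.

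\textbf{The form.} The natural candidate is the restriction of the standard trace form of $KG$, namely $f(x,y)=$ the coefficient of $e$ in $xy$. It is automatically associative and symmetric. On basic sets, $f(\underline{X},\underline{Y})=|X|\cdot 1_K$ if $Y=X^{-1}$ and $0$ otherwise. Hence $f$ is non-degenerate exactly when every orbit size $|X|$ is invertible in $K$. This will fail in general, for instance when $\mathrm{char}\,K=p$ divides some orbit size.

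\textbf{Reducing to a tensor product.} Write $G=\prod_i C_{p_i}$ over the distinct primes, using that $|G|$ is square-free. Then $\Aut(G)=\prod_i \Aut(C_{p_i})$ is abelian, and $T\leq \prod_i C_{p_i-1}$. I would like to decompose $\mathcal{A}$, but $T$ need not be a product of subgroups of the factors, so a direct tensor decomposition is not available.

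Instead I would work with the basis of $G$-characters, passing to $\bar K$; self-injectivity and the Frobenius property are invariant under field extension. Write $\mathrm{char}\,K=\ell$.
\begin{itemize}
\item If $\ell\nmid |G|$, then $KG$ is semisimple and commutative, so its subalgebra $\mathcal{A}$ is reduced, hence semisimple. This is the same argument as in Proposition \ref{cop}, and we are done.
\item If $\ell=p_1$ divides $|G|$, write $G=C_{p_1}\times G'$ with $\ell\nmid |G'|$. Then $KG=KC_{p_1}\otimes KG'$, where $KG'\otimes\bar K\cong \bar K^{|G'|}$ via the primitive idempotents $e_\chi$, and $T$ permutes these idempotents.
\end{itemize}
The fixed algebra decomposes as
\[ (KG)^T=\bigoplus_{\text{$T$-orbits } O} \Big(\sum_{\chi\in O} KC_{p_1}\,e_\chi\Big)^T \cong \bigoplus_O (KC_{p_1})^{T_\chi}, \]
where $T_\chi$ is the stabilizer of a representative $\chi\in O$, acting on $C_{p_1}$ through its image in $\Aut(C_{p_1})$. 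Also $\mathcal{A}=(KG)^T$, because orbit sums of a group acting by permutations span the fixed points over any field. So it suffices to show that each $(KC_{p_1})^{S}$ with $S\leq \Aut(C_{p_1})$ is symmetric.

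\textbf{The local case.} In characteristic $p_1$ we have $KC_{p_1}\cong K[u]/(u^{p_1})$ with $u=g-1$, and $S$ acts by $g\mapsto g^a$. Since $|S|$ divides $p_1-1$, it is invertible in $K$. The step I expect to be the main obstacle is identifying the fixed algebra $K[u]/(u^{p_1})^S$. My plan is to choose a generator $v$ of the maximal ideal on which $S$ acts by a character; this is possible because $S$ is a $p_1'$-group, so the maximal ideal and its square decompose compatibly. Then the fixed algebra is $K[v^{|S|}]$ truncated, i.e. $K[w]/(w^m)$ with $m=\lceil p_1/|S|\rceil$. This is a truncated polynomial algebra: local, commutative, with simple socle, hence Frobenius and thus symmetric. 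A finite direct sum of symmetric algebras is symmetric, which finishes the argument.

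Should the linearization argument be problematic, the fallback is to exhibit the socle directly. The element $\underline{C_{p_1}}=u^{p_1-1}$ is $S$-fixed and annihilated by the augmentation ideal, so it spans a one-dimensional socle of the local fixed algebra. Socle simplicity is all that is needed for a commutative local algebra to be Frobenius.
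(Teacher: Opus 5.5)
Your proof is correct. Its overall skeleton matches the paper's: reduce to $\ell=\mathrm{char}\,K$ dividing $|G|$, split off the Sylow $\ell$-subgroup of $G$, pass to a splitting field, show the fixed algebra is a product of truncated polynomial algebras, and descend using that a commutative self-injective algebra is symmetric. The key step, however, is handled differently.

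The paper splits the automorphism group as $P\times R$, with $P$ its Sylow $p$-subgroup. Since $\Aut(C_p)$ is a $p'$-group, $P$ acts trivially on the Sylow $p$-subgroup $Q$ of $G$, which gives $(K_0G)^P\cong\prod K_0[x]/(x^p)$. The $p'$-part $R$ is then treated through the skew group algebra $R\ltimes A$. Using that $R\ltimes A$ is a Nakayama algebra with $A$ projective over it, the paper concludes that $A^R$ is a commutative algebra of finite representation type, hence a product of algebras $K_0[x]/(x^{l_j})$.

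You instead decompose $\bar KG'$ by its primitive idempotents $e_\chi$. The orbit--stabilizer isomorphism $x\mapsto xe_\chi$ then treats the whole automorphism group at once and reduces everything to $(\bar KC_\ell)^S$, where $S\leq\Aut(C_\ell)$ is automatically an $\ell'$-group. Maschke's theorem, applied to find an $S$-stable complement of $\mathfrak m^2$ in $\mathfrak m$, produces an eigen-generator $v$. The fixed algebra is then visibly $\bar K[v^{|S|}]\cong\bar K[w]/(w^m)$.

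Your route is more elementary and more explicit. It avoids the skew-group-algebra results and the classification of commutative representation-finite algebras. It also yields the precise structure $\mathcal A\otimes_K\bar K\cong\prod_O\bar K[w]/(w^{m_O})$ with $m_O=1+(\ell-1)/|S_O|$.

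One caution: your fallback would not work on its own. Showing that $u^{\ell-1}$ (your $\underline{C_{p_1}}$) lies in the socle of the fixed algebra does not show that this socle is one-dimensional. For example, the subalgebra of $K[u]/(u^4)$ spanned by $1,u^2,u^3$ has a two-dimensional socle. It is the linearization, showing that the fixed algebra is generated by a single element, that actually gives simplicity of the socle.
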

\demo For a group $G$ and an automorphism $\sigma\in \Aut(G)$, $\sigma$ induces a natural algebra automorphism on the group algebra $KG$, by sending $\sum_{g\in G}a_g g$ to $\sum_{g\in G}a_g g^\sigma$. In this sense, a cyclotomic $K$-Schur ring over a group $G$ is the subalgebra $(KG)^H$ of $KG$ for a group of automorphism $H\leq \Aut(G)$.

Let $K$ be a field of characteristic $p$, and let $G$ be a cyclic group of square-free order and $H\leq \Aut(G)$ a group of automorphism. Clearly, the statement is true for $p\nmid |G|$. Thus we may assume that $p>0$ and $p\mid |G|$. Note that $H$ is abelian. Let $P$ and $Q$ be the Sylow $p$-subgroup of $H$ and $G$, respectively. Then $H=P\times R$ is a direct product of $P$ and a $p'$-subgroup $R$ and $G=Q\times T$ is a direct product of $Q$ and a $p'$-subgroup $T$. Then $(KG)^H=((KG)^P)^R$, where $((KG)^P)^R$ is well defined as $P$ and $R$ commutes and therefore $(KG)^P$ admits an $R$-action. Note that $(KG)^H\cong (\mathbb{F}_p G)^H\otimes_{\mathbb{F}_p} K$. Thus if we could show that $B:=((\mathbb{F}_pG)^P)^R$ is a symmetric algebra, then it would follow that $(KG)^H$ is also a symmetric algebra. Let $K_0/\mathbb{F}_p$ be a extension of $\mathbb{F}_p$ such that $\End_{K_0G}(K_0\Omega)\cong \End_{\mathbb{F}_pG}(\mathbb{F}_p\Omega)\otimes_{\mathbb{F}_p} K_0$ and $K_0T$ are split algebras (a split algebra is an algebra for which the endomorphism algebras of all simple modules are isomorphic to the ground field).

We first show that $A:=(K_0G)^P$ is a symmetric algebra. Note that $Q$ is a characteristic subgroup of $G$ of order $p$, and therefore has no automorphisms of order $p$. Thus the restriction of $P$ to $Q$ is trivial. Hence $A^P\cong K_0Q\otimes_{K_0} (K_0T)^P$. Since $T$ is an abelian $p'$-subgroup, it follows from our assumption on $K_0$ that $K_0T$ is isomorphic to a direct product of $m$ copies of $K_0$. This implies that $(K_0T)^P$ is isomorphic to a direct product of $k$ copies of $K_0$ for some $k\leq m$. Thus $A^P\cong K_0Q\otimes_{K_0} (K_0T)^P$ is isomorphic to a direct product of $k$ copies of $(K_0[x]/(x^p))$ and therefore is a symmetric Nakayama algebra.

Next, we show $C:=(K_0G)^H=A^R$ is a symmetric algebra. We consider the skew group algebra, denoted $R\ltimes A$, which is a free $A$-module consisting elements of the form $ra$ for $ r\in R, a\in A$, and the multiplication in $R\ltimes A$ is given by the rule $(r_1 a_1)\cdot (r_2 a_2):=r_1r_2a^{r_2}_1 a_2$. Now, we endow a $R\ltimes A$-module structure on $A$ by defining $a\cdot (rb):=a^rb$. Then one can get an isomorphism $\End_{R\ltimes A}(A)\cong A^R, f\mapsto 1^f$. Since $R$ is a $p'$-group, it follows from \cite[Corollary 4.3(a), p. 87 and Theorem 2.14, p. 118]{ARS} that $A$ is a projective $R\ltimes A$-module and $R\ltimes A$ is a Nakayama algebra. Note that $C$ is commutative. Then the tensor functor $-\otimes_C{} _CA_{R\ltimes A}:C\modcat\ra (R\ltimes A)\modcat$  is a fully faithful embedding of categories. Hence $C=A^R$ is a commutative split algebra of finite representation type. This implies that there exist non-negative integers $l_1,l_2, \cdots, l_s$ such that $A^R\cong \prod^{s}_{j=1}K_0[x]/(x^{l_j})$. Thus $A^R$ is a symmetric algebra.

For any $B$-module $M$, we have $$\Ext^1_B(M,B)\otimes_{\mathbb{F}_p} K_0\cong \Ext^1_C(M\otimes_{\mathbb{F}_p} K_0, C)=0.$$ This implies that $B$ is a self-injective algebra. Since $B$ is commutative and any commutative self-injective algebra are symmetric algebra, the statement follows. \qed

\medskip
Now, we are in a position to prove Proposition \ref{Sch}.

\medskip
\noindent {\bf Proof of Proposition \ref{Sch}.} Let $K$ be a field and let $\mathcal{A}_G$ be a $K$-Schur ring over a cyclic group $G$ of order $pq$ for distinct primes $p$ and $q$. Clearly, if the characteristic of $K$ does not divide $pq$ then $\mathcal{A}_G$ is semisimple and therefore symmetric. Without loss of generality, we may assume that the characteristic of $K$ is $p$. By Proposition \ref{C-S} we have one of the following:

$(i)$ $\mathcal{A}$ is trivial;

$(ii)$ $\mathcal{A}$ is a cyclotomic $K$-Schur ring;

$(iii)$ there exist $\mathcal{A}$-subgroups $H$ and $U$ such that $G=H\times U$ and $\mathcal{A}=\mathcal{A}_H\times\mathcal{A}_U$ is a tensor product of two $K$-Schur rings $\mathcal{A}_H$ and $\mathcal{A}_U$;

$(iv)$ there exist proper $\mathcal{A}$-subgroups $U\leq H\leq G$ such that $\mathcal{A}=\mathcal{A}_H\wr_{H/U}\mathcal{A}_{G/U}$ is a wedge product of two $K$-Schur rings $\mathcal{A}_H$ and $\mathcal{A}_{G/U}$.

In particular, $K$-Schur ring over a cyclic group of prime order are necessarily cyclotomic. For case (i), $\mathcal{A}_G$ is either the group algebra $KG$ or a 2-dimensional algebra, and therefore is always a symmetric algebra in any cases. For case (ii), it follows from Proposition \ref{Cyc} that $\mathcal{A}_G$ is a group algebra. Assume case (iii). Then $\mathcal{A}_G=\mathcal{A}_H\times\mathcal{A}_U$ is a tensor product of two $K$-Schur rings  $\mathcal{A}_H$ and $\mathcal{A}_U$, where $H$ and $U$ cyclic groups of order $p$ and $q$, respectively. By Proposition \ref{Cyc} again, we see that $\mathcal{A}_H$ and $\mathcal{A}_U$ are symmetric algebras and so are their tensor product. This implies that $\mathcal{A}_G$ is a symmetric algebra.

Now, assume case (iv). Then it follows from $|G|=pq$ that $\mathcal{A}=\mathcal{A}_H\wr\mathcal{A}_{G/H}$ is a wedge product of two $K$-Schur rings $\mathcal{A}_H$ and $\mathcal{A}_{G/H}$. Assume $|H|=p$. Since $\mathcal{A}_H$ and $\mathcal{A}_{G/H}$ are symmetric algebras, it follows from Lemma \ref{wedge} that $\mathcal{A}_G$ is a symmetric algebra. Thus we may assume that $|H|=q$. Now, we define a map $f:\mathcal{A}_G \times \mathcal{A}_G\ra K$ by letting $f(x,y)$ be the coefficient of $1$ in the product of $xy$. It is straightforward to check that $f$ is indeed a non-degenerate associative symmetric bilinear form. Hence $\mathcal{A}_G$ is a symmetric algebra. This completes the proof.\qed
\vskip 2mm
The following lemma gives a criterion for whether a rank $3$ permutation group is an $S$-permutation group or not.
\begin{Lem}\label{rank3}
Let $G$ be a rank $3$ permutation group of subdegrees $1\leq a<b$. Then $G$ is an $S$-permutation group if and only if $\gcd(a+b+1, ab, \lambda(G), b-\frac{\lambda(G)b}{a})=1$.
\end{Lem}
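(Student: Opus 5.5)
Throughout I take $\lambda(G)$ to be the intersection number of Remark \ref{rank-3}. Let $A$ and $B$ be the adjacency matrices of the two non-diagonal $2$-orbits, of valencies $a$ and $b$. Then
$$A^2=aI+\lambda(G)A+\mu B, \qquad \mu=\frac{a(a-\lambda(G)-1)}{b}.$$
The proof is a direct analysis of the $3$-dimensional algebra $\End_{KG}(K\Omega)\cong KI\oplus KA\oplus KB$ given by Lemma \ref{Coherent}, for each field $K$. Write $J=I+A+B$ for the all-ones matrix and $n=a+b+1$.

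\textbf{Step 1: reduce to a commutative algebra.} Since $a<b$, neither $2$-orbit can be paired with the other, because paired orbits have equal valencies. So both are self-paired, $A$ and $B$ are symmetric matrices, and the algebra is commutative. A commutative algebra is symmetric if and only if it is self-injective. A finite-dimensional commutative algebra is a product of local algebras, and such a product is self-injective if and only if each factor is. In dimension $3$, every non-local decomposition has factors of dimension $\le 2$. Each such factor is $K$, a field, or $K[x]/(x^2)$, and all of these are symmetric. A local $3$-dimensional commutative algebra is either $K[x]/(x^3)$ (symmetric), a non-split form that becomes $K[x]/(x^3)$ or a field after extending scalars (still symmetric), or $K[x,y]/(x,y)^2$, which is not self-injective because its socle is $2$-dimensional. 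Since the structure constants are integers, I may also extend $K$ to its algebraic closure. Hence the algebra fails to be symmetric if and only if it is local with radical of dimension $2$ and zero square. In characteristic $0$ it is semisimple, so only $K=\overline{\mathbb F}_p$ needs attention.

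\textbf{Step 2: translate locality and radical-square-zero into congruences.} From $J^2=nJ$, the element $J$ is nilpotent exactly when $p\mid n$. If $p\nmid n$, then $n^{-1}J$ is a non-trivial idempotent and the algebra is not local. So assume $p\mid n$. Then $J$ lies in the radical, and $AJ=aJ$ forces the residue-field value of $A$ to be $a$. The radical is therefore spanned by $J$ and $A-aI$. The condition $\rad^2=0$ then reads
$$(A-aI)^2=0,\qquad (A-aI)J=0,\qquad J^2=0 .$$
The last two hold automatically once $p\mid n$. Expanding with $B=J-I-A$ gives
$$(A-aI)^2=(a+a^2-\mu)I+(\lambda(G)-\mu-2a)A+\mu J .$$
So the algebra is non-symmetric over characteristic $p$ exactly when
$$p\mid n,\qquad p\mid \mu,\qquad p\mid a(a+1),\qquad p\mid \lambda(G)-2a .$$

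\textbf{Step 3 (expected main obstacle): match these congruences with the gcd condition.} I need to show that the congruences of Step 2 hold for some prime $p$ if and only if $p$ divides all of $a+b+1$, $ab$, $\lambda(G)$ and $b-\lambda(G)b/a$. The delicate point is that $\mu=a(a-\lambda(G)-1)/b$ involves a division by $b$, so reducing it modulo $p$ when $p\mid b$ needs care. I would handle this with the integral identity $b\mu=a(a-\lambda(G)-1)$, together with $a+b+1\equiv 0 \pmod p$, and split into the cases $p\mid a$ and $p\mid a+1$ (the latter giving $p\mid b$). In each case I will rewrite the conditions in terms of the quantities appearing in the statement. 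The term $b-\lambda(G)b/a$ should play the role of the condition on $\mu$, since $b-\lambda(G)b/a$ and $\mu$ differ only by terms that vanish modulo $p$ once the other congruences hold. If that identification does not come out exactly, I would recompute directly which basis combination spans the radical and its square, as a fallback, and check the resulting congruences against the statement.

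Once the two sets of conditions are identified, both implications follow. If the gcd is $1$, no prime makes the algebra local with square-zero radical, so $G$ is an $S$-permutation group. If a prime $p$ divides the gcd, then $\End_{KG}(K\Omega)\cong K[x,y]/(x,y)^2$ for $K=\overline{\mathbb F}_p$, which is not symmetric.
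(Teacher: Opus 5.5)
Steps 1 and 2 are correct. The gap is in Step 3, and it comes from your reading of $\lambda(G)$. In Remark~\ref{rank-3}, $\lambda(G)$ is the coefficient of $D_2$ (your $B$) in $D_1^2$, while the coefficient of $D_1$ (your $A$) is $a-1-\lambda(G)b/a$. So the paper's $\lambda(G)$ is your $\mu$, and your ``$\lambda(G)$'' is $a-1-\lambda(G)b/a$; you have swapped the two intersection numbers.

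With your labelling, the equivalence you plan to prove in Step 3 is false, so careful handling of the cases $p\mid a$ and $p\mid a+1$ cannot close it. Take $S_5$ acting on the $2$-subsets of $\{1,\dots,5\}$ (the Petersen graph), where $a=3$, $b=6$ and $A^2=3I+0\cdot A+1\cdot B$. Your reading gives $\gcd(10,18,0,6)=2$. Yet your own Step 2 criterion fails at $p=2$, because $2\nmid\mu=1$, so the algebra is symmetric in characteristic $2$. With the correct reading the gcd is $\gcd(10,18,1,4)=1$, as it should be. For the same reason, your guess that $b-\lambda(G)b/a$ ``plays the role of the condition on $\mu$'' is backwards. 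Also, the worry about dividing by $b$ is unfounded: $\mu$ and $\lambda(G)b/a$ are intersection numbers, hence integers.

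Once the labels are fixed, Step 3 is immediate. In the basis $I,A,B$,
$(A-aI)^2=a(a+1)I-\bigl(a+1+\lambda(G)b/a\bigr)A+\lambda(G)B$.
So by your Step 2, the algebra fails to be symmetric in characteristic $p$ exactly when $p$ divides $a+b+1$, $a(a+1)$, $\lambda(G)$ and $a+1+\lambda(G)b/a$. Since $b\equiv-(a+1)\pmod p$, the second and fourth conditions are equivalent to $p\mid ab$ and $p\mid b-\lambda(G)b/a$. This is exactly $p\mid\gcd(a+b+1,ab,\lambda(G),b-\lambda(G)b/a)$, and it gives both directions at once.

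With this correction your argument is complete, and it is sharper than the paper's. The paper reduces to $p\mid ab$ and $p\mid a+b+1$ via Proposition~\ref{p-S}, assumes an isomorphism onto $K[x,y]/(x^2,xy,y^2)$, and compares coefficients to get necessity of the congruences; the converse is obtained only ``by the argument above''. You instead show that when $p\mid n$ the only possible radical is $\ker\chi_0=\mathrm{span}(J,A-aI)$, where $MJ=\chi_0(M)J$. That turns the classification of $3$-dimensional commutative algebras into a genuine if-and-only-if criterion.
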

\demo Let $H$ be a point stabilizer in $G$. For convenience, we denote $\lambda(G)$ by $\lambda$.

Suppose $\gcd(a+b+1, ab, \lambda, \frac{\lambda b}{a})=1$. We need to show that $\mathcal{H}_K(G,H)\cong\End_{KG}(K(G/H))$ is a symmetric algebra for an arbitrary field $K$ of characteristic $p\geq 0$. By Proposition \ref{p-S}, we may assume that $p\mid ab$ and $p\mid (1+a+b)$. In particular, $p\nmid \gcd(a,b)$. Let $H, D_1=HgH$ and $D_2=HtH$ be the three distinct $(H,H)$-double cosets such that $\frac{|D_1|}{|H|}=a$ and $\frac{|D_2|}{|H|}=b$.

To prove that $\mathcal{H}_K(G,H)$ is a symmetric algebra, it suffices to show that $\mathcal{H}_K(G,H)\ncong K[x,y]/(x^2,xy,y^2)$. Suppose for the contrary that $\pi: \mathcal{H}_K(G,H)\ra K[x,y]/(x^2,xy,y^2)$ is an isomorphism. Then $\pi(H)=1+(x^2,xy,y^2)$, $\pi(D_1)=l_1+k_1x+s_1y+(x^2,xy,y^2)$ and $\pi(D_2)=l_2+k_2x+s_2y+(x^2,xy,y^2)$ for $l_i, k_i, s_i\in K$, and these three elements form a basis of $K[x,y]/(x^2,xy,y^2)$. In particular, $(k_1,s_1)\neq (0,0)\neq (k_2,s_2)$. Thus
$$\begin{array}{lll}& \pi(D^2_1) = l^2_1+2l_1k_1 x+2l_1 s_1 y+(x^2,xy,y^2),\\
& \pi(D^2_2) = l^2_2+2l_2k_2 x+2l_2 s_2 y+(x^2,xy,y^2),\\
& \pi(D_1\cdot D_2)=l_1l_2+(l_1k_2+l_2k_1) x+(l_1 s_2+l_2s_1) y+(x^2,xy,y^2).
\end{array}$$On the other hand, the multiplicative rule for $D_1$ and $D_2$ in Remark \ref{rank-3} implies that

$$\begin{array}{lll}& \pi(D^2_1) = a\cdot 1_K+(a-1-\frac{\lambda b}{a})\pi(D_1)+\lambda \pi(D_2),\\
& \pi(D^2_2) = b\cdot 1_K+(b-\frac{\lambda b}{a})\pi(D_1)+(b-a-1+\lambda)\pi(D_2),\\
& \pi(D_1\cdot D_2)= \pi(D_2\cdot D_1)=\frac{\lambda b}{a}\cdot \pi(D_1)+(a-\lambda)\pi(D_2)
\end{array}$$Comparing these equalities with the equalities above, we find that $ \{l_1,l_2\}=\{-1_K,0\}$ and $\lambda\cdot 1_K= (b-\frac{\lambda b}{a})\cdot 1_K=0$. This together with $p\mid (a+b+1)$ and $p\mid ab$ implies that $p\mid \gcd(a+b+1, ab, \lambda, b-\frac{\lambda b}{a})=1$, a contradiction.

Conversely, suppose $\gcd(a+b+1, ab, \lambda, b-\frac{\lambda b}{a})>1$. Let $p$ be a divisor of $\gcd(a+b+1, ab, \lambda, b-\frac{\lambda b}{a})>1$ and let $K$ be a field of characteristic $p$. Then by the argument above we  derive that $\mathcal{H}_K(G,H)\cong K[x,y]/(x^2,xy,y^2)$. Therefore, $\mathcal{H}_K(G,H)$ is not a symmetric algebra in this case. This completes the proof. \qed
\begin{rem}\label{rank-3}
Let $G$ be a rank $3$ transitive permutation group and let $1\leq a<b$ be the subdegrees of $G$. For a point stabilizer $H$ in $G$, let $H, D_1=HgH$ and $D_2=HtH$ be the three distinct $(H,H)$-double cosets such that $\frac{|D_1|}{|H|}=a$ and $\frac{|D_2|}{|H|}=b$. Then by \cite[Proposition 3.3, p.42]{AK}, there exists $\lambda(G)=\lambda\in \mathbb{N}$ such that in $\mathcal{H}_{\mathbb{Z}}(G,H)$ the following holds:
$$\begin{array}{lll}& D^2_1 = aH+(a-1-\frac{\lambda b}{a})D_1+\lambda D_2,\\
& D^2_2 = bH+(b-\frac{\lambda b}{a})D_1+(b-a-1+\lambda)D_2,\\
& D_1\cdot D_2= D_2\cdot D_1=\frac{\lambda b}{a}\cdot D_1+(a-\lambda)D_2.
\end{array}$$where the coefficients appearing in these equalities are non-negative integers.
\end{rem}
Now, we are able to prove Proposition \ref{S-per}.

\medskip
\noindent {\bf Proof of Proposition \ref{S-per}.} Let $K$ be a field of characteristic $p$.

For case (1), we deduce from Proposition \ref{3/2-tran} that $G$ is an $S$-permutation group. For case (2), Lemma \ref{regul} and Proposition \ref{Cyc} imply that $G$ is an $S$-permutation group. For case (3), it follows from Proposition \ref{p-S}(i)(ii) that $G$ is an $S$-permutation group.

For case (4), by Lemma \ref{rank3} we see that $G$ is an $S$-permutation group. Assume case (5). Then $G$ is either a regular permutation group or a permutation group with a point stabilizer of order two. Clear, a regular permutation group is an $S$-permutation group. Then we may assume that $G$ is a permutation group with a point stabilizer of order two. According to (1), we only need to show that $\End_{KG}(K\Omega)$ is a symmetric algebra when $p=2$. Note that $G$ has a normal cyclic regular subgroup, say $H$, of order not divisible by $4$. Let $U\leq \Aut(H)$ be the subgroup generated by the automorphism sending all $h\in H$ to $h^{-1}$. According to Lemma \ref{regul}, it follows that $\End_{KG}(K\Omega)\cong (KH)^U$. Let $H=T\times L$ be a direct product of a Sylow $2$-subgroup $T$ and a $2'$-subgroup $L$. Then $|T|\leq 2$. With a similar argument as in the proof of Proposition \ref{Cyc}, we see that $(KH)^U\cong KT\otimes_K (KL)^U$. Clearly, both of $KT$ and $(KL)^U$ are symmetric algebras. This yields that $\End_{KG}(K\Omega)\cong(KH)^U$ is a symmetric algebra.

Assume (v). Set $$r_1(x,y)=xy-yx, r_2(x,y)=(xy)^2-(yx)^2 ~\mbox{and}~ r_3(x,y)=(xy)^3-(yx)^3.$$ By Proposition \ref{p-S}(v), we only need to show that $\mathcal{H}_K(G,B)$ is a symmetric algebra when $p=q$. Let $\mathcal{G}$ be the Coxeter graph associated with the $\mathcal{W}_G$. Then by the main theorem in \cite{Iwa}, we obtain $$\mathcal{H}_K(G,B)\cong K\lg x,y\rg/(r_i(x,y),x^2+x,y^2+y),$$ where $i=1, i=2$ and $i=3$ according to $\mathcal{G}=D_2, B_2(\mbox{or}~C_2)$ and $G_2$ (here we use the standard notion in Coxeter graphs), respectively. For $i\in \{1,2,3\}$, set $$\small{A_i:=K\lg x,y\rg/(r_i(x,y),x^2+x,y^2+y), x_i:=x+(r_i(x,y),x^2+x,y^2+y)~\mbox{and}~ y_i:=y+(r_i(x,y),x^2+x,y^2+y).}$$ By a word of $x_i$ and $y_i$ we mean an element of form $x_iy_ix_i\cdots$ or $y_ix_iy_i\cdots$. Then words in $x_i$ and $y_i$ of length at most $2i$ form a basis, say $B_i$, of $A_i$.

For $1\leq i\leq 3$ and any two words $M_1$ and $M_2$ of $x_i$ and $y_i$, we define $t(M_1, M_2)$ to be the coefficient of $(x_iy_i)^i$ in the expression of $M_1M_2$ as a linear combination of the elements in $B_i$ and extend this bilinearly to $A_i\times A_i$. Then it is directly to check that $t$ is indeed a non-degenerate associative symmetric bilinear form. Thus $K\lg x,y\rg/(r_i(x,y),x^2+x,y^2+y)$ is a symmetric algebra for $1\leq i\leq 3$. \qed
\section{Examples}\label{exam}
In this section, we present two examples to illustrate our main results.

\medskip
Although the Hecke algebras associated with rank $3$ permutation groups are $3$-dimensional, but there exist rank $3$ permutation groups that are not $S$-permutation groups.

\begin{exam}
{\rm Let $G\leq S_4$ be the subgroup generated by the permutations $(1234)$ and $(24)$. Then $G$ is dihedral subgroup of order $8$, which is transitive on $\Omega=\{1,2,3,4\}$ and of rank $3$ with subdegrees $1,1$ and $2$. Moreover, we have $\lambda(G)=0$ and therefore $\gcd(4, 2, 0, 2)=2>1$. Let $K$ be a field of characteristic $2$. By computation, we see that $\End_{KG}(K\Omega)\cong K[x,y]/(x^2,xy,y^2)$ is not a symmetric algebra. This explains the conditions in Proposition \ref{p-S}(iii) and Proposition \ref{S-per}(4)-(5), and this also shows that a permutation group with a cyclic regular subgroup of order a prime square is not necessarily an $S$-permutation group. }
\end{exam}
The following example shows that $\mathcal{H}_K(G,B)$ is not a symmetric algebra when $G$ is finite group with a split $BN$-pair at characteristic $q$ satisfying $\mathcal{W}_G\cong S_3$ and the characteristic of $K$ equals $q$.
\begin{exam}{\rm
Let $G=\GL(3,q)$ and $B$ a Borel subgroup, and let $K$ be a field of characteristic of $q$. Then by generators and relations described in \cite{Iwa} we have $$\mathcal{H}_K(G,B)\cong K\lg x,y\rg/(xyx-yxy,x^2+x,y^2+y).$$ Set $A:=K\lg x,y\rg/(xyx-yxy,x^2+x,y^2+y)$, and let $u, v\in A$ be the cosets of $x$ and $y$, respectively. Then $-u, -v$ and $uvu$ are idempotents in $A$ and $uvu$ is central. Thus $P_1:=-uA=uA, P_2:=-vA=vA$ and $P_3:=uvuA$ are projective $A$-modules. Since $P_3$ is one dimensional, it is simple. This together with $\Hom_A(P_1,P_3)\cong uvuAu\neq 0$ and $\Hom_A(P_2,P_3)\cong uvuAv\neq 0$ implies that $P_3\mid P_k$ for $k=1,2$. Now, we write $P_k=M_k\oplus P_3$ as direct sum of a submodule $M_k$ and $P_1$ for $k=1,2$. Comparing the dimensions of $uA+vA$ and $M_1+M_2+P_3$, we find that $uA+vA=M_1\oplus M_2\oplus P_3$. Thus there exists a simple projective $A$-module $Q$ such that $A=M_1\oplus M_2\oplus P_3\oplus Q$. Note that $$\Hom_{A}(P_1,P_2)\cong vAu, \quad \Hom_{A}(P_2,P_1)\cong uAv, \quad \End_{A}(P_1)\cong uAu, ~\mbox{and}~  \End_{A}(P_2)\cong vAv.$$By these isomorphisms, we find that $\End_A(M_k) (k=1,2), \Hom_A(M_1,M_2)$ and $\Hom_A(M_2,M_1)$ are $1$-dimensional and $\End_A(uA+vA)$ is $5$-dimensional. Since $A\opp\cong \End_A(A)$ is $6$-dimensional, it follows that $\Hom_A(Q, uA+vA)=0=\Hom_A(uA+vA,Q)$. Then $A\opp\cong \End_A(A)$ is isomorphic the algebra $K\times K\times \Lambda$, where $\Lambda=\End_A(M_1\oplus M_2)$ is given by the following quiver with relation $$\xymatrix{
	\bullet  \ar@<-0.4ex>[r]_{\alpha}_(0){1}_(1){2}
	& \bullet \ar@<-0.4ex>[l]_\beta}, \qquad \alpha\beta=\beta\alpha=0.
$$Thus $A\cong A\opp$ as algebras. Note that the head and socle of any indecomposable projective module over a symmetric algebra are isomorphic simple modules. From this fact, we see that $A$ is not a symmetric algebra.}
\end{exam}
\section{Further questions}\label{ques}
In this section, we propose some questions for further research.

\medskip
By Proposition \ref{rank}, a subgroup of an $S$-permutation group with the same rank is also an $S$-permutation. We say an $S$-permutation group is {\it minimal} if it does not have any subgroup of the same rank.

\begin{ques}
Can we classify all minimal $S$-permutation groups?
\end{ques}
Primitive permutation groups paly an important role in permutation group theory. To be concerned with this class of permutation groups, the following question arises
\begin{ques}
Can we classify all primitive $S$-permutation groups?
\end{ques}
Motivated by Lemma \ref{regul}, if $H$ is a finite group for which all $K$-Schur rings over $H$ are symmetric algebras, then any permutation containing $H$ as a regular subgroup is an $S$-permutation group. Thus the following question arises
\begin{ques}
Can we determine all finite group $H$ for which all $K$-Schur rings over $H$ are symmetric algebras?
\end{ques}
\noindent{\bf Data Availability Statement.}
Data sharing is not applicable to this article as no new data were created or analyzed in this study.

\medskip

\noindent{\bf Acknowledgement.}

The first author was partially supported by the Natural Science Foundation of Jiangxi Province, China (Grant No. 20252BAC200143) and the National Natural Science Foundation of China (Grant No. 12501025) . The second author was partially supported by the National Natural Science Foundation of China (Grant Nos. 12350710787, 12571016).

\end{document}